\DeclareMathOperator{\rank}{rank}
\newcommand{\sdotsss}%
{\text{\raisebox{-2.2pt}{$\cdot-$}%
 \raisebox{1.7pt}{$\cdot$}%
\raisebox{5.6pt}{$-\cdot$}}}
\renewcommand{\le}{\leqslant}
\renewcommand{\ge}{\geqslant}
\newtheorem{theorem}{Theorem}[section]
\newtheorem{lemma}[theorem]{Lemma}
\newtheorem{definition}[theorem]{Definition}
\newtheorem{remark}[theorem]{Remark}
\newtheorem{example}[theorem]{Example}
\newcommand{\hide}[1]{}
\begin{document}

\title{Canonical forms for pairs of matrices associated with Lagrangian and Dirac subspaces
}

\tnotetext[t1]{The work was supported by the Swedish Research Council (VR) under grant 2021-05393.}

\date{}

\author[um]{Sweta Das}
\ead{sweta.das@oru.se}

\author[vn,um]{Andrii Dmytryshyn}
\ead{andrii@chalmers.se}

\author[xo]{Volker Mehrmann}
\ead{mehrmann@math.tu-berlin.de}

\address[um]{School of Science and Technology, Örebro University, 70182 Örebro, Sweden}
\address[vn]{Department of Mathematical Sciences, Chalmers University of Technology and University of Gothenburg, 41296 Gothenburg, Sweden}
\address[xo]{Institut f{\"u}r Mathematik, Sekr. MA 4-5, TU Berlin, Straße des 17. Juni 136, 10623 Berlin, Germany}

\begin{abstract}
    We derive the canonical forms for a pair of $n\times n$ complex matrices $(E,Q)$ under transformations $(E,Q) \rightarrow (UEV,U^{-T}QV)$, and $(E,Q) \rightarrow (UEV,U^{-*}QV)$, where $U$ and $V$ are nonsingular complex matrices. We, in particular, consider the special cases of $E^TQ$ and $E^*Q$ being \\ (skew-)symmetric and (skew-)Hermitian, respectively,
    that are  associated with Lagrangian and Dirac subspaces and related linear-time invariant  dissipative Hamiltonian descriptor systems. 
\end{abstract}

\begin{keyword} equivalence\sep congruence\sep transformation \sep Lagrange subspace \sep Dirac subspace \sep codimension \sep orbit \sep equations

\MSC 15A21 \sep 15A22 \sep 15A24  
\end{keyword}

\maketitle

\section{Introduction}
In this paper, we derive the canonical forms for a pair of $n\times n$ complex matrices $(E,Q)$ under the transformations $(E,Q) \rightarrow (UEV,U^{-T}QV)$ and $(E,Q) \rightarrow (UEV,U^{-*}QV)$, with two nonsingular complex matrices $U$ and $V$. 

Such pairs of matrices arise in the energy based modeling of linear time-invariant \textit{dissipative Hamiltonian descriptor systems} \cite{MeMW18} of the form
\begin{equation}\label{LTI_pH}
    E\dot{x} = (J-R)Qx,
\end{equation}
where $E$, $J$, $R$, $Q$ are $n\times n$ complex matrices $J = -J^{*}$ and $R = R^* \ge 0$, and $E^*Q=Q^*E\geq 0$, i.e., the columns of $\begin{bmatrix} E \\ Q \end{bmatrix}$ form a Lagrangian subspace. Note that here the notation $W\ge 0$ stands for $W$ being positive semidefinite.
See \cite{MehU23,MehS23,SchM23} for a detailed analysis of the relationship between Lagrange, respectively Dirac subspaces (the case that $EQ^*=-QE^*$) and port-Hamiltonian descriptor systems.

To understand the linear algebraic properties of  pH descriptor systems, one needs to conduct the analysis of the pencil $\lambda E - LQ$ related to \eqref{LTI_pH}, where $L = J-R$. For such an analysis it is beneficial to understand the properties of the pencil $\lambda E - Q$, which has been an active research are, see
\cite{MeMW18,MehS23}, where 
Kronecker-like condensed form of pencils $\lambda E- Q$ with some additional properties have been studied.

Canonical forms for pairs of matrices, possibly with additional structures, have been studied for various transformations, e.g., congruence $(A,B) \rightarrow (S^TAS,S^TBS)$ \cite{LaRo05, Thom91}, strict equivalence $(A,B) \rightarrow (SAL,SBL)$ \cite{Gant59,LaRo05, Rodm06, Thom91}, contragradient equivalence $(A,B) \rightarrow (SAL,L^{-1}BS^{-1})$ \cite{HoMe95}, simultaneous similarity $(A,B) \rightarrow (S^{-1}AS,S^{-1}BS)$ \cite{Hua23,Serg00}, $(A,H) \rightarrow (S^{-1}AS,S^{T}HS)$, for skew-symmetric $H$, $H$-symplectic $A$ \cite{GrJR}, feedback-injection equivalence $(A,B) \rightarrow (SAR,SBL)$, where $S$ and $L$ are block-triangular \cite{DmFR16,GaSe04}.

Most of the above canonical forms are sensitive to perturbations, see, e.g., \cite{Dmyt25}. One way to study the changes in the canonical structures that are arising from perturbations is by stratifying the orbits \cite{Dmyt17,DmKa14}. Codimensions of orbits play an important role in such stratifications
due to the fact that an orbit has only orbits of higher codimension in its closure \cite{Dmyt25,DmKS13}. Computation of codimensions of orbits has been carried out for generalized matrix products \cite{KaKK11}, contragredient matrix pencils \cite{GaMa99}, (*)congruence orbits of matrices \cite{DeDo11_1,DeDo11,DmFS12,DmFS14}, congruence orbits of (skew-)symmetric matrix pencils \cite{Dmyt19,Dmyt16,DmKS14}. 

The manuscript is organized as follows. Section $2$ recalls preliminary definitions, the canonical forms for square complex matrices under the transformations $^T$-congruence and $^*$-congruence along with the codimensions of orbits under the same group actions. In section $3$, we introduce an equivalence relation on pairs of complex matrices called $^T$-equivalence and $^*$-equivalence along with a necessary and sufficient condition for two pairs of matrices to be $^T$-equivalent and $^*$-equivalent. In Section $3.1$, we present the canonical forms under $^T$-equivalence and $^*$-equivalence for a complex pair $(E,Q)$ such that one of the matrices is nonsingular. Furthermore, we consider partial cases where $E^TQ$ and $E^*Q$ are symmetric (skew-symmetric) and Hermitian (skew-Hermitian), respectively. For  cases with both matrices possibly being singular, we derive canonical forms under the above-mentioned transformations, in Section $3.2$. Finally, we compute the codimensions of the $^T$-equivalence and $^*$-equivalence orbits in Sections $4.1$ and $4.2$, respectively.

\section{Preliminary results}
In this section we introduce the notation and present some preliminary results.

Define the matrices:
$$ \Gamma_n = 
    \begin{bmatrix}
        0 & & & & & (-1)^{n+1}\\
        & & & & \udots & (-1)^{n\phantom{+1}}\\
        & & & -1 & \udots & \\
        & & 1 & 1 & &  \\
        & -1 & -1 & & &  \\
        1 & 1 & & & & 0
    \end{bmatrix},\text{ } \Delta_n = 
    \begin{bmatrix}
         0 & & & 1\\
         & & \udots & i\\
         & 1 & \udots & \\
         1 & i & &  0 
    \end{bmatrix}, $$ 
    $$J_n(\lambda) =
    \begin{bmatrix}
        \lambda & 1 & & 0\\
        & \lambda & \ddots &\\
        & & \ddots & 1\\
        0 & & & \lambda
    \end{bmatrix}, \text{ and } H_{2n}(\mu) = 
    \begin{bmatrix}
    0 & I_n\\
    J_n(\mu) & 0
    \end{bmatrix}.
$$ The unspecified entries in the above matrices are zero. Note that $\Gamma_1 = [1], \text{ } \Delta_1 = [1], \text{ }J_1(\lambda) = [\lambda], \text{ }  H_{2}(\mu) =
    \begin{bmatrix}
    0 & 1\\
    \mu & 0
    \end{bmatrix}$. The sets $\mathbb{C}$, $\mathbb{R}$, $\mathbb{Z}$ and $\mathbb{Z}_+$ denote the complex, real, integer, and positive integer numbers, respectively. Also, $GL_{n}(\mathbb C)(\mathbb C)$ and $M_{m,n}(\mathbb C)$ are the sets of all nonsingular $n\times n$ complex matrices and $m\times n$ complex matrices, respectively. For convenience, we denote the set of complex square matrices by $M_{n}(\mathbb C)$.

Matrices $A$ and $B$ in $M_{n}(\mathbb C)$ are \emph{$^{T}$-congruent (respectively, $^{*}$-congruent)} if there exists $S \in GL_{n}(\mathbb C)$ such that $S^TAS =B$ (respectively, $S^*AS =B$). 

The following theorem (Theorem 1.1(a) in \cite{HoSe06}) presents a canonical form for square complex matrices under  $^{T}$-congruence.
\begin{theorem}
\label{congruence} Each square complex matrix is $^{T}$-congruent to a direct sum, uniquely determined up to permutation of summands, of canonical matrices of types:
    \begin{center}
    \begin{tabular}{|c|c|}
        \hline
        Type 0 & $J_n(0)$ \\
        \hline
        Type I & $\Gamma_{n}$ \\
        \hline
        Type II & $H_{2n}(\mu), 0 \neq \mu \neq (-1)^{n+1},$\\
        & $ \mu \text{ is determined up to replacement by } \mu^{-1}$\\
        \hline
    \end{tabular}
    \end{center}
\end{theorem}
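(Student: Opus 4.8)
\emph{Proof strategy.} The plan is to handle the nonsingular case by means of the \emph{cosquare} and then to reduce the general case to it by a regularization that splits off the singular (Type $0$) part. For nonsingular $A\in M_{n}(\mathbb C)$ set $\mathcal C(A):=A^{-T}A$. A short computation gives $\mathcal C(S^{T}AS)=S^{-1}\mathcal C(A)S$, so the cosquare is a congruence invariant up to similarity. The crucial --- and genuinely harder --- fact is the converse: \emph{two nonsingular matrices $A$ and $B$ are $^{T}$-congruent if and only if $\mathcal C(A)$ and $\mathcal C(B)$ are similar.} For the nontrivial ``if'' direction I would start from a similarity $S^{-1}\mathcal C(A)S=\mathcal C(B)$, rewrite it as $ASB^{-1}=\bigl(B^{-1}S^{T}A\bigr)^{T}$, and from this manufacture --- by an averaging/symmetrization identity --- a nonsingular $T$ with $T^{T}AT=B$. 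I would also record that every cosquare is similar to its inverse, since $\mathcal C(A)^{-1}=A^{-1}A^{T}$ is similar (conjugate by $A^{T}$) to $A^{T}A^{-1}=\mathcal C(A)^{T}$, which is similar to $\mathcal C(A)$; hence in the Jordan form of $\mathcal C(A)$ the blocks pair up as $J_{k}(\lambda)\leftrightarrow J_{k}(\lambda^{-1})$, and $0$ is not an eigenvalue.

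Next I would compute the cosquares of the candidate canonical blocks: $\mathcal C\bigl(H_{2n}(\mu)\bigr)=J_{n}(\mu)\oplus J_{n}(\mu)^{-T}$, which is similar to $J_{n}(\mu)\oplus J_{n}(\mu^{-1})$ when $\mu\neq0$, and $\mathcal C(\Gamma_{n})$ is similar to the single Jordan block $J_{n}((-1)^{n+1})$ --- indeed $\Gamma_{n}$ and $H_{2n}(\mu)$ are designed exactly so that these identities hold. Reading the two identities together yields both (i) the list of Jordan structures realizable as cosquares (those similar to their inverse, with the refinement that for each $k$ the multiplicity of $J_{k}((-1)^{k+1})$ is arbitrary while that of $J_{k}((-1)^{k})$ is even) and (ii) that any nonsingular $A$ is $^{T}$-congruent to the direct sum of blocks $\Gamma_{n}$ and $H_{2n}(\mu)$ dictated by the Jordan form of $\mathcal C(A)$ --- apply the lemma of the previous paragraph to $A$ and that direct sum. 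Uniqueness up to permutation of summands then follows because $\mathcal C(\cdot)$ is a complete congruence invariant on nonsingular matrices, while the constraints $0\neq\mu\neq(-1)^{n+1}$ and ``$\mu$ determined up to $\mu^{-1}$'' are precisely what is needed to prevent counting a class twice (for instance $H_{2n}((-1)^{n+1})$ must be excluded because its cosquare is $J_{n}((-1)^{n+1})\oplus J_{n}((-1)^{n+1})$, i.e.\ it is congruent to $\Gamma_{n}\oplus\Gamma_{n}$). This proves the theorem for nonsingular $A$.

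For a possibly singular $A$ I would prove a regularization lemma: whenever $A$ is singular, an explicit elementary $^{T}$-congruence brings it to the form $J_{k}(0)\oplus A'$ for some $k\geq1$ and some $A'$ of smaller size; iterating, $A$ becomes $A_{\mathrm{reg}}\oplus J_{k_{1}}(0)\oplus\cdots\oplus J_{k_{s}}(0)$ with $A_{\mathrm{reg}}$ nonsingular, to which the nonsingular case applies and produces the Type I and Type II summands. That a nilpotent matrix contributes only Type $0$ blocks is clear, since $\Gamma_{n}$ and $H_{2n}(\mu)$ with $\mu\neq0$ are invertible. It remains to show that the exponents $k_{i}$ are uniquely determined; I would do this by exhibiting them as congruence invariants, e.g.\ by passing to the pair $(A+A^{T},\,A-A^{T})$ of a symmetric and a skew-symmetric matrix --- whose simultaneous-congruence class determines that of $A$ --- and reading off its singular (minimal-index) data, or from a rank sequence produced by Belitskii's algorithm. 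I expect the two genuinely delicate points to be the ``if'' part of the cosquare lemma (the single place an ingenious identity is required) and the uniqueness of the regularizing decomposition, in particular of the multiset of nilpotent exponents; everything else reduces to the explicit, if somewhat tedious, verification that $\Gamma_{n}$ and $H_{2n}(\mu)$ have the claimed cosquares.
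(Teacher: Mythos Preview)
The paper does not prove this theorem at all: it is stated in the preliminaries section as Theorem~1.1(a) of Horn--Sergeichuk \cite{HoSe06} and used as a black box throughout. Your outline via the cosquare $\mathcal C(A)=A^{-T}A$ is exactly the strategy of that reference (and of the earlier literature it builds on), so there is nothing to compare against in the present paper; your sketch is correct in spirit and identifies the two genuinely nontrivial steps --- the ``if'' direction of the cosquare lemma and the uniqueness of the nilpotent Type~0 data in the singular case.
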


The next theorem  (Theorem 1.1(b) in \cite{HoSe06}) presents a canonical form for square complex matrices under the transformation $^{*}$-congruence.
\begin{theorem}\label{*congruence}
    Each square complex matrix is $^{*}$-congruent to a direct sum, uniquely determined up to permutation of summands, of canonical matrices of types:
    \begin{center}
    \begin{tabular}{|c|c|}
        \hline
        Type 0 & $J_n(0)$ \\
        \hline
        Type I´ & $\lambda\Delta_{n}, \text{  } |\lambda| = 1$ \\
        \hline
        Type II´ & $H_{2n}(\mu), \text{  } |\mu| > 1$\\
        \hline
    \end{tabular}
    \end{center}
\end{theorem}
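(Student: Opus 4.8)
The plan is to reconstruct the classical \emph{cosquare} proof of this Horn--Sergeichuk result. The starting observation is that a $^{*}$-congruence $A\mapsto S^{*}AS$ acts on the ordered pair $(A,A^{*})$ as $(A,A^{*})\mapsto(S^{*}AS,\,S^{*}A^{*}S)$, so the classification is governed by the pencil $\lambda A-A^{*}$ together with the involution $A\leftrightarrow A^{*}$. Its Kronecker-type data are symmetric under this involution, and this symmetry first forces a splitting, up to $^{*}$-congruence, of $A$ into a nonsingular part and a singular part.

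\emph{Step 1 (regularization and Type 0).} I would show that every $A\in M_{n}(\mathbb C)$ is $^{*}$-congruent to $A_{\mathrm r}\oplus N$ with $A_{\mathrm r}$ nonsingular and $N$ nilpotent, and that the two summands are unique up to $^{*}$-congruence. This is a Fitting-type argument carried out on the chain of subspaces obtained from $\ker A$ and $\ker A^{*}$ by alternately applying $A$ and $A^{*}$ (equivalently, on the singular and infinite-eigenvalue Kronecker blocks of $\lambda A-A^{*}$, which the involution forces to be self-paired). One then checks that a nilpotent matrix is, under $^{*}$-congruence, a direct sum of nilpotent Jordan blocks $J_{n_{i}}(0)$ --- this is precisely the Type~0 contribution --- with the multiset of sizes pinned down by the ranks of $N$, $NN^{*}N$, $(N^{*}N)^{k}$, and so on, which gives uniqueness of this part.

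\emph{Step 2 (the nonsingular part).} For nonsingular $A$, introduce the $^{*}$-cosquare $\Phi_{A}:=A^{-*}A$. Since $A\mapsto S^{*}AS$ replaces $\Phi_{A}$ by $S^{-1}\Phi_{A}S$, the similarity class of $\Phi_{A}$ is a $^{*}$-congruence invariant; and taking conjugate transposes in $A^{*}\Phi_{A}=A$ shows $\Phi_{A}$ is similar to $\Phi_{A}^{-*}$, so the eigenvalues of $\Phi_{A}$ come in pairs $\{\lambda,1/\bar\lambda\}$ with identical Jordan structure, the self-paired ones being exactly those on the unit circle. A pair with $|\mu|\neq 1$ is realized by the Type~II$'$ block $H_{2n}(\mu)$, $|\mu|>1$, whose cosquare is $J_{n}(\mu)\oplus J_{n}(\bar\mu)^{-1}\sim J_{n}(\mu)\oplus J_{n}(1/\bar\mu)$; a unit-modulus eigenvalue is realized by the Type~I$'$ block $\lambda\Delta_{n}$, $|\lambda|=1$, whose cosquare is a single unit-modulus Jordan block (namely $J_{n}(\lambda^{2})$ in this normalization). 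The delicate point is the converse: one must prove that the conjugacy class of $\Phi_{A}$, \emph{together with a unit-modulus phase on the part of the spectrum lying on the circle} --- the analogue of the inertia of a Hermitian form, recorded here by $\lambda$ rather than $\lambda^{2}$ in $\lambda\Delta_{n}$ --- is a \emph{complete} invariant on nonsingular matrices. This is done block-by-block along the Jordan decomposition of $\Phi_{A}$: off the circle one pairs reciprocal-conjugate blocks and transforms into $H_{2n}(\mu)$-form; on the circle one normalizes to $\mu\Delta_{n}$-form, rescales to achieve $|\lambda|=1$, and then verifies that the residual sign is a genuine invariant (it cannot be changed, since $S^{*}(\cdot)S$ contributes only positive ``gauge'').

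\emph{Step 3 (assembly and uniqueness), and the main obstacle.} Combining Steps 1--2 yields the asserted direct sum of $J_{n}(0)$, $\lambda\Delta_{n}$ ($|\lambda|=1$) and $H_{2n}(\mu)$ ($|\mu|>1$); uniqueness up to permutation follows because the Type~0 part is determined in Step~1, the Jordan form of the cosquare together with its unit-circle phases is determined in Step~2, and the normalizations $|\lambda|=1$, $|\mu|>1$ remove the last ambiguity $\mu\leftrightarrow 1/\bar\mu$ inside a pair. I expect the main obstacle to be exactly the converse half of Step~2 --- upgrading similarity of cosquares (plus the phase datum) to an actual $^{*}$-congruence and, in particular, isolating the extra unit-circle invariant and proving nothing further hides there; by contrast, verifying the cosquares of $\Delta_{n}$ and $H_{2n}(\mu)$ and carrying out the regularization of Step~1 are comparatively routine once the right invariants have been identified. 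An alternative but essentially equivalent route writes $A=H+\mathrm iG$ with $H,G$ Hermitian and classifies the pair $(H,G)$ up to simultaneous $^{*}$-congruence via the Hermitian pencil $\lambda H-G$ and its sign characteristic.
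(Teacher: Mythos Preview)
The paper does not prove this theorem; it is quoted as Theorem~1.1(b) of Horn--Sergeichuk \cite{HoSe06} and used thereafter as a black box. So there is no in-paper argument to compare against, and your sketch is essentially the approach of \cite{HoSe06} itself: regularize, then classify the nonsingular part via the $^{*}$-cosquare $A^{-*}A$ together with the residual unit-circle phase.

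That said, Step~1 as written contains a genuine error, not merely loose wording. You characterize the singular summand by nilpotency and then assert that every nilpotent matrix is $^{*}$-congruent to a direct sum of blocks $J_{k}(0)$. This is false: $^{*}$-congruence does not respect the spectrum, and for instance
\[
N=\begin{bmatrix}0&1&1\\0&0&1\\0&0&0\end{bmatrix}
\quad\text{is nilpotent, yet}\quad
S^{*}NS=J_{2}(0)\oplus(-\Delta_{1})\ \text{ for }\
S=\begin{bmatrix}1&0&1\\0&0&-1\\0&1&1\end{bmatrix},
\]
so its canonical form carries the nonsingular Type~I$'$ block $-\Delta_{1}$. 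In particular the decomposition ``nonsingular $\oplus$ nilpotent'' is not unique up to $^{*}$-congruence of the summands: $N$ admits both the trivial splitting (empty nonsingular part, $N$ itself as the nilpotent part) and the splitting $(-\Delta_{1})\oplus J_{2}(0)$, so your uniqueness claim fails as stated and the subsequent reduction of the ``nilpotent'' piece cannot proceed. The correct invariant isolating the Type~0 part is the one you allude to parenthetically---the Kronecker data of the pencil $\lambda A-A^{*}$, whose singular blocks together with its regular blocks at $0$ and $\infty$ (self-paired under $\lambda\leftrightarrow 1/\bar\lambda$) account exactly for the $J_{k}(0)$ summands; equivalently, the vanishing of all sufficiently long alternating products $AA^{*}AA^{*}\cdots$. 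With Step~1 reformulated that way your plan goes through, and your identification of the delicate point in Step~2 is accurate.
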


Consider in the following the action of $^{T}$-congruence of $GL_{n}(\mathbb C)$ on $M_{n}(\mathbb C)$, i.e., $S$ acts on $M$ by $S^TMS$. Then, $\mathcal{O}(M) = \{ S^TMS: S \in GL_{n}(\mathbb C)\}$ is the congruence orbit of $M$ under $^{T}$-congruence. The \emph{tangent space} of $\mathcal{O}(M)$ at the point $M$ is the set $\mathcal{T}(M) = \{ XM+MX^T: X \in M_{n}(\mathbb C)\}$. The codimension of the tangent space is equal to the dimension of its orthogonal complement which is also the codimension of the complex manifold $\mathcal{O}(M)$. It turns out to be the dimension of the solution space of the equation $XM+MX^T = 0$ \cite{DeDo11}. Similar definitions work for orbit under $^{*}$-congruence and its codimension, the only difference is that congruence orbit under the action of $^{*}$-congruence is a manifold over $\mathbb{R}$ instead of $\mathbb{C}$ \cite{DeDo11_1}. The following definitions play an important role in finding the codimensions of orbits of a matrix under the action of $^{T}$-congruence and $^{*}$-congruence.
\begin{definition}[Definition 2 in \cite{DeDo11}]\label{inter} 
    Let $M \in M_m, \text{ } N \in M_{n}(\mathbb C) $. The \textit{interaction} between $M$ and $N$, $\mbox{\rm \mbox{\rm inter}}(M,N)$, is the dimension of the solution space of the linear system
    \begin{equation*}
        XM + NY^T = 0, \text{ } YN + MX^T = 0,
    \end{equation*} for unknowns $X \in M_{n,m}, \text{ } Y \in M_{m,n}.$
\end{definition}
Considering the conjugate transpose instead of transpose in Definition~\ref{inter}, we have the following definition.
\begin{definition}[Definition 3.1 in \cite{DeDo11_1}]\label{inter*} 
   Let $M \in M_m, \text{ } N \in M_{n}(\mathbb C) $. The \emph{real \mbox{\rm inter}action} between $M$ and $N$, $\mbox{\rm \mbox{\rm inter}}^{*}(M,N)$, is the real dimension of the solution space over $\mathbb{R}$ of the linear system
    \begin{equation*}
        XM + NY^* = 0, \text{ } YN + MX^* = 0,
    \end{equation*} for unknowns $X \in M_{n,m}, \text{ } Y \in M_{m,n}.$
\end{definition}
Using the notion of \mbox{\rm inter}action, we have Theorem \ref{cod_cong} (Theorem 2 in \cite{DeDo11}), which presents the formula for the codimension of orbit of a square matrix under the action of $^{T}$-congruence.
\begin{theorem}\label{cod_cong} 
    The codimension of the orbit of a matrix $A \in M_{n}(\mathbb C)$ under the action of $^{T}$-congruence with canonical form $$ \mathcal{C}_A = J_{p_1}(0) \oplus ... \oplus J_{p_a}(0) \oplus \Gamma_{q_1} \oplus ... \oplus \Gamma_{q_b} \oplus H_{r_1}(\mu_1) \oplus ... \oplus H_{r_c}(\mu_c),$$ where $p_1 \ge p_2 \ge ... \ge p_a$ is computed as the following sum: $$ c_{A} = c_{0} + c_{1} + c_{2} + c_{00} + c_{11} + c_{22} + c_{01} + c_{02} + c_{12},$$ whose components are given by:
    \begin{enumerate}
         \item The codimension of the Type 0 blocks:
         $ c_0 = \sum_{i=1}^{a} \big\lceil\dfrac{p_i}{2}\big\rceil.$
         \item The codimension of the Type I blocks:
         $ c_1 = \sum_{i=1}^{b} \big\lfloor\dfrac{q_i}{2}\big\rfloor.$
         \item The codimension of the Type II blocks:
         $ c_2 = \sum_{i=1}^{c} r_i + 2\sum_j \big\lceil\dfrac{r_j}{2}\big\rceil,$ where the second sum is taken over those blocks $H_{2r_j}((-1)^{r_j})$ in $\mathcal{C}_A$.
         \item The codimension due to \mbox{\rm inter}actions between Type 0 blocks:
         $$c_{00} = \sum_{i,j=1, i<j}^{a} \mbox{\rm \mbox{\rm inter}}(J_{p_i}(0),J_{p_j}(0)),$$ where $\mbox{\rm \mbox{\rm inter}}(J_{p_i}(0),J_{p_j}(0)) =
         \begin{cases}
              p_j, \text{ if } p_j \text{ is even, }\\ 
              p_i, \text{ if } p_j \text{ is odd and } p_i \neq p_j,\\
              p_i + 1, \text{ if } p_j \text{ is odd and } p_i = p_j.
         \end{cases}$
         \item The codimension due to \mbox{\rm inter}actions between Type I blocks:\\
         $c_{11} = \sum \min\{q_{i},q_{j}\},$ where the sum runs over all pairs of blocks $(\Gamma_{q_i},\Gamma_{q_j})$, $ i<j,$ in $\mathcal{C}_A$ such that $q_i$ and $q_j$ have the parity (both odd or even).
         \item The codimension due to \mbox{\rm inter}action between Type II blocks:\\
         $c_{22} = 2\sum \min\{r_i,r_j\} + 4\sum \min\{r_s,r_t\},$ where the first sum is taken over all pairs $(H_{2r_i}(\mu_i), H_{2r_j}(\mu_j)), i<j$, of blocks in $\mathcal{C}_A$ such that $''\mu_i \neq \mu_j \text{ and } \mu_i\mu_j = 1''$ or $\mu_i = \mu_j \neq \pm 1$; and the second sum is taken over all pairs $(H_{2r_s}(\mu_s),H_{2r_t}(\mu_t))$, $s<t$, of blocks in $\mathcal{C}_A$ such that $\mu_s = \mu_t = \pm 1$.
         \item The codimension due to \mbox{\rm inter}actions between Type 0 and Type I blocks:
         $c_{01} = N_{odd}.\sum_{i=1}^{b}q_i,$ where $N_{odd}$ is the number of Type 0 blocks with odd size in $\mathcal{C}_A$.
         \item The codimension due to \mbox{\rm inter}actions between Type 0 and Type II blocks
         $c_{02} = N_{odd}.\sum_{i=1}^{c}r_i,$ where $N_{odd}$ is the number of Type 0 blocks with odd size in $\mathcal{C}_A)$.
         \item The codimension due to \mbox{\rm inter}actions between Type I and Type II blocks
         $c_{12} = 2\sum \min\{k,l\},$ where the sum is taken over all pairs $(\Gamma_k,H_{2l}((-1)^{k+1}))$ of blocks in $\mathcal{C}_A$.
     \end{enumerate}
\end{theorem}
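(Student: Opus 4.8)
The plan is to compute the codimension $c_A$ from the description recalled just before the statement, namely $c_A = \dim_{\mathbb{C}}\{X \in M_n(\mathbb{C}) : XA + AX^T = 0\}$, by reducing $A$ to its canonical form and then decoupling this equation along the block decomposition. The first step is to verify that this dimension is a $^T$-congruence invariant: if $B = S^T A S$ with $S$ nonsingular, then $X \mapsto Y := S^{-T} X S^T$ is a linear bijection of $M_n(\mathbb{C})$, and a direct computation gives $XB + BX^T = S^T(YA + AY^T)S$, so the solution spaces of $XB + BX^T = 0$ and $YA + AY^T = 0$ have equal dimension. Hence one may assume $A = \mathcal{C}_A = A_1 \oplus \dots \oplus A_k$, a direct sum of canonical blocks of Types $0$, I, II as in Theorem~\ref{congruence}.

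Next I would partition $X = (X_{ij})$ conformally with $\mathcal{C}_A$, so that $XA + AX^T = 0$ becomes $X_{ij}A_j + A_i X_{ji}^T = 0$ for all $i, j$. The diagonal equations $X_{ii}A_i + A_i X_{ii}^T = 0$ are mutually independent and independent of the off-diagonal unknowns; for each pair $i < j$, the two equations in $X_{ij}$ and $X_{ji}$ are precisely the system of Definition~\ref{inter} with $M = A_i$ and $N = A_j$. Consequently the solution space splits as a direct sum and
\[
c_A = \sum_{i=1}^k d(A_i) + \sum_{1 \le i < j \le k} \mathrm{inter}(A_i, A_j), \qquad d(A_i) := \dim\{Z : ZA_i + A_i Z^T = 0\}.
\]
So the problem reduces to two finite lists of computations: (a) the values of $d$ on the three block types, producing $c_0$, $c_1$, $c_2$; and (b) the values of $\mathrm{inter}$ on all unordered pairs of block types, producing $c_{00}$, $c_{11}$, $c_{22}$, $c_{01}$, $c_{02}$, $c_{12}$. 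Summing these yields the stated formula.

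For (a): the self-equation for $J_n(0)$ is solved directly from its shift structure, giving $d(J_n(0)) = \lceil n/2 \rceil$; the Toeplitz-type structure of $\Gamma_n$ gives $d(\Gamma_n) = \lfloor n/2 \rfloor$; for $H_{2n}(\mu)$ one writes $Z$ in $2 \times 2$ block form, uses $H_{2n}(\mu) = \left[\begin{smallmatrix} 0 & I_n \\ J_n(\mu) & 0 \end{smallmatrix}\right]$ to reduce to equations governed by $J_n(\mu)$, and obtains the generic value together with an extra contribution $2\lceil n/2 \rceil$ at the distinguished real parameter $\mu = (-1)^n$ (the case producing the correction term in $c_2$), which reflects a resonance in the associated cosquare spectrum. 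For (b): in each interaction system one eliminates one of the two unknowns to reach a Sylvester/Stein-type equation whose solution dimension is controlled by the common spectral data of the two blocks; this is why $\mathrm{inter}(\Gamma_p, \Gamma_q)$ is nonzero exactly when $p$ and $q$ have the same parity, $\mathrm{inter}(H_{2p}(\mu), H_{2q}(\nu))$ is nonzero exactly when $\mu\nu = 1$ or $\mu = \nu$, a Type $0$ block interacts with Type I and Type II blocks only through its odd-sized summands, and $(\Gamma_k, H_{2l}((-1)^{k+1}))$ is the only mixed Type I/Type II pair with nonzero interaction. The exact values, including the factors $2$ and $4$ in $c_{22}$ and the factor $2$ in $c_{12}$, then follow by counting solutions, the doublings reflecting the two-fold block structure of $H$.

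The main obstacle is the casework in (b), above all pinning down the parity thresholds and the coefficient doublings: distinguishing the resonant $H_{2p}(\mu)$ from the generic one, separating the three regimes of $c_{22}$ ($\mu_i\mu_j = 1$ with $\mu_i \ne \mu_j$, versus $\mu_i = \mu_j \ne \pm 1$, versus $\mu_i = \mu_j = \pm 1$), and verifying that an even-sized Type $0$ block contributes nothing to $c_{01}$ and $c_{02}$ while an odd-sized one contributes its full size. Each individual computation is a routine finite linear-algebra exercise, but the bookkeeping is lengthy; small cases can be used to cross-check against $c_A = n^2 - \dim \mathcal{O}(A)$. This is the route taken in \cite{DeDo11}.
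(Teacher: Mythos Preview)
The paper does not give its own proof of this theorem: it is quoted verbatim as a preliminary result (Theorem~2 of \cite{DeDo11}) and used later without reproof. Your proposal correctly outlines the strategy of the original proof in \cite{DeDo11}---congruence invariance, block decomposition into self-terms $d(A_i)$ and interaction terms $\mathrm{inter}(A_i,A_j)$, then case-by-case computation---and you yourself note this at the end, so there is nothing further to compare.
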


The following theorem (Theorem 3.3 in \cite{DeDo11_1}) presents the (real) codimensions of orbits of a square matrix under the action of $^{*}$-congruence.
\begin{theorem}\label{*cong_cod}
     Let $A \in M_{n}(\mathbb C)$ with canonical form under the action of $^{*}$-congruence, $$ \mathcal{C}_A = J_{p_1}(0) \oplus ... \oplus J_{p_a}(0) \oplus \alpha_{q_1}\Delta_{q_1} \oplus ... \oplus \alpha_{q_b}\Delta_{q_b} \oplus H_{r_1}(\mu_1) \oplus ... \oplus H_{r_c}(\mu_c),$$ where $p_1 \ge p_2 \ge ... \ge p_a$. Then the real codimension of the orbit of A for the action of $^{*}$-congruence, depends only on $\mathcal{C}_A$. It can be computed  as the following sum: $$ c_{A} = c_{0} + c_{1} + c_{2} + c_{00} + c_{11} + c_{22} + c_{01} + c_{02},$$ whose components are given by: 
     \begin{enumerate}
         \item The codimension of the Type 0 blocks:
         $ c_0 = \sum_{i=1}^{a} 2\big\lceil\dfrac{p_i}{2}\big\rceil.$
         \item The codimension of the Type I´ blocks:
         $ c_1 = \sum_{i=1}^{b} q_i.$
         \item The codimension of the Type II´ blocks:
         $ c_2 = \sum_{i=1}^{c} 2r_i.$
         \item The codimension due to \mbox{\rm inter}actions between Type 0 blocks:
         $$c_{00} = \sum_{i,j=1, i<j}^{a} \mbox{\rm \mbox{\rm inter}}(J_{p_i}(0),J_{p_j}(0)),$$ where $\mbox{\rm \mbox{\rm inter}}(J_{p_i}(0),J_{p_j}(0)) =
         \begin{cases}
              2p_j, \text{ if } p_j \text{ is even, }\\ 
              2p_i, \text{ if } p_j \text{ is odd and } p_i \neq p_j,\\
              2(p_i + 1), \text{ if } p_j \text{ is odd and } p_i = p_j.
         \end{cases}$
         \item The codimension due to \mbox{\rm inter}actions between Type I´ blocks:\\
         $c_{11} = \sum \min\{q_{i},q_{j}\},$ where the sum runs over all pairs of blocks $(\Delta_{q_i},\Delta_{q_j})$, $ i<j,$ in $\mathcal{C}_A$ such that (a) $q_i$ and $q_j$ have the same parity (both odd or both even) and $\alpha_i= \pm \alpha_j$, and (b) $q_i$ and $q_j$ have different parity and $\alpha_i= \pm \alpha_j$.
         \item The codimension due to \mbox{\rm inter}action between Type II´ blocks:\\
         $c_{22} = 4\sum \min\{r_i,r_j\},$ where the first sum is taken over all pairs  $(H_{2r_i}(\mu_i),$ $ H_{2r_j}(\mu_j)), i<j$, of blocks in $\mathcal{C}_A$ such that such that $\mu_i=\mu_j$.
         \item The codimension due to \mbox{\rm inter}actions between Type 0 and Type I´ blocks:
         $c_{01} = N_{odd}.\sum_{i=1}^{b}2q_i,$ where $N_{odd}$ is the number of Type 0 blocks with odd size in $\mathcal{C}_A$.
         \item The codimension due to \mbox{\rm inter}actions between Type 0 and Type II´ blocks
         $c_{02} = N_{odd}.\sum_{i=1}^{c}4r_i,$ where $N_{odd}$ is the number of Type 0 blocks with odd size in $\mathcal{C}_A$.
     \end{enumerate}
\end{theorem}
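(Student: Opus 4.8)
The plan is to reduce the codimension to a kernel dimension and then to evaluate that dimension blockwise on the canonical form $\mathcal{C}_A$. Differentiating the action $S\mapsto S^{*}AS$ at $S=I_n$ along $I_n+tX$ shows that the tangent space $\mathcal{T}(A)$ to $\mathcal{O}(A)$ at $A$ is the image of the $\mathbb{R}$-linear map $\phi_A\colon M_n(\mathbb{C})\to M_n(\mathbb{C})$, $\phi_A(X)=XA+AX^{*}$, so, as recalled in Section~2, the real codimension of $\mathcal{O}(A)$ equals $\dim_{\mathbb{R}}\ker\phi_A$ by the real rank--nullity theorem. Since $X\mapsto S^{-*}XS^{*}$ is an $\mathbb{R}$-linear bijection of $M_n(\mathbb{C})$ taking $\ker\phi_{S^{*}AS}$ onto $\ker\phi_A$, we may assume $A=\mathcal{C}_A=\bigoplus_k A_k$, the given direct sum of canonical blocks. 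Partitioning $X=(X_{kl})$ conformally, the $(k,l)$ block of $\phi_A(X)$ is $X_{kl}A_l+A_kX_{lk}^{*}$; hence $X\in\ker\phi_A$ exactly when $X_{kk}A_k+A_kX_{kk}^{*}=0$ for each $k$ and, for each pair $k<l$, the pair $(X_{kl},X_{lk})$ solves the linear system of Definition~\ref{inter*} with $M=A_k$ and $N=A_l$. Therefore
\begin{equation*}
\dim_{\mathbb{R}}\ker\phi_A=\sum_k d(A_k)+\sum_{k<l}\mbox{\rm inter}^{*}(A_k,A_l),\qquad d(B):=\dim_{\mathbb{R}}\{X:XB+BX^{*}=0\}.
\end{equation*}
Grouping the diagonal terms by block type yields $c_0+c_1+c_2$, and grouping the interaction terms by the unordered pair of types of $A_k$ and $A_l$ yields the same-type terms $c_{00},c_{11},c_{22}$ and the mixed terms $c_{01},c_{02}$; the absence of a term pairing the blocks $\alpha\Delta_q$ with the blocks $H_{2r}(\mu)$ is the assertion that $\mbox{\rm inter}^{*}(\alpha\Delta_q,H_{2r}(\mu))=0$ when $|\mu|>1$.

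It remains to evaluate $d(B)$ on each canonical block and $\mbox{\rm inter}^{*}(B,B')$ on each pair, and in every case the relevant space is the solution set of a linear system whose coefficients are single Jordan blocks (possibly inverted or conjugate-transposed). The uniform device is: for the invertible blocks $\alpha\Delta_q$ and $H_{2r}(\mu)$ one eliminates $X^{*}$ using $X=-BX^{*}B^{-1}$, reducing the system to a commutativity (Sylvester) condition involving the associated $^{*}$-cosquare $B^{*}B^{-1}$ together with the original sesquilinear relation, whence a Sylvester-type equation $YJ_s(\lambda)=J_t(\nu)Y$ plus a constraint linking $Y$ and $\overline{Y}$; for $J_p(0)$ one works directly with the nilpotent filtration. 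The solution space of a bare Sylvester equation $YJ_s(\lambda)=J_t(\nu)Y$ is the classical Toeplitz-striped space, of complex dimension $\min\{s,t\}$ if $\lambda=\nu$ and $0$ otherwise; the conjugation constraint then cuts it down to a real subspace whose real dimension depends on whether the scalar and size data of the block(s) are fixed or interchanged by the symmetry $\lambda\mapsto 1/\overline{\lambda}$ governing the $^{*}$-cosquare spectrum. Running this through block type by block type gives $d(J_p(0))=2\lceil p/2\rceil$, $d(\alpha\Delta_q)=q$, and $d(H_{2r}(\mu))=2r$ for $|\mu|>1$, together with the stated interaction formulas --- their parity conditions, the constraints $\alpha_i=\pm\alpha_j$ and $\mu_i=\mu_j$, and the coefficients $2$ and $4$ --- while $\mbox{\rm inter}^{*}(\alpha\Delta_q,H_{2r}(\mu))=0$ follows because the $^{*}$-cosquare of $\alpha\Delta_q$ has spectrum on the unit circle whereas that of $H_{2r}(\mu)$ does not, so the corresponding Sylvester operator is invertible. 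Summing these contributions gives the claimed formula for $c_A$.

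The reduction of the first paragraph is routine; the technical heart, and the main source of error, is the case analysis of the second. The delicate points are (i) the self-term $d(J_p(0))=2\lceil p/2\rceil$ and the three-case formula for $\mbox{\rm inter}^{*}(J_{p_i}(0),J_{p_j}(0))$, which require careful bookkeeping of how the defining equations interact with the degrees of the nilpotent filtration and in which the coincidence $p_i=p_j$ must be isolated; and (ii) deciding, in each Sylvester reduction, whether the conjugation constraint halves the complex solution dimension or instead merely relates two copies of it --- the latter being exactly the case in which the scalar and size data of the blocks are invariant (rather than swapped) under $\lambda\mapsto 1/\overline{\lambda}$, and it is precisely this dichotomy that produces the extra term in $\mbox{\rm inter}^{*}(J_{p_i}(0),J_{p_i}(0))$ for odd $p_i$, the coefficient $4$ rather than $2$ in $c_{22}$ when $\mu_i=\mu_j$, and the form of conditions (a) and (b) in $c_{11}$. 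Organizing the whole so that each pair of canonical blocks is counted exactly once with the correct multiplicity is the principal bookkeeping obstacle.
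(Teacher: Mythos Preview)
The paper does not prove this theorem; it is quoted verbatim as a preliminary result from De~Ter\'an and Dopico \cite{DeDo11_1} (their Theorem~3.3), so there is no in-paper proof to compare your proposal against.

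That said, your outline is the standard route taken in \cite{DeDo11_1} (and its $^{T}$-congruence companion \cite{DeDo11}): identify the real codimension with $\dim_{\mathbb{R}}\ker\phi_A$, reduce to the canonical form, split into self-terms $d(A_k)$ and pairwise interactions $\mbox{\rm inter}^{*}(A_k,A_l)$, and evaluate each via the $^{*}$-cosquare and Sylvester-type equations. Your heuristic for why the Type~I$'$\,/\,Type~II$'$ cross term vanishes (unit-circle versus off-circle cosquare spectrum) is exactly the mechanism used there.

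However, what you have written is a proof \emph{plan}, not a proof. The entire content of the theorem lies in the explicit values of $d(\cdot)$ and $\mbox{\rm inter}^{*}(\cdot,\cdot)$, and you do not actually compute any of them: ``Running this through block type by block type gives\dots'' is an assertion, not an argument, and your own final paragraph concedes that the parity bookkeeping for $J_p(0)$, the isolation of the diagonal case $p_i=p_j$, and the halving-versus-not-halving dichotomy under conjugation are the delicate points --- and then stops. Each of these is a nontrivial calculation (several pages in \cite{DeDo11_1}); in particular, the three-case formula for $\mbox{\rm inter}^{*}(J_{p_i}(0),J_{p_j}(0))$ and the exact conditions (a)/(b) in $c_{11}$ do not fall out of the cosquare heuristic alone, since $J_p(0)$ is singular and has no cosquare. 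To turn this into a proof you would need to carry out those block computations explicitly, or cite \cite{DeDo11_1} for them --- at which point you are citing the theorem itself.
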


After presenting the notation and some preliminary results, in the next section we derive the canonical forms for our special structured pairs.

\section{$^{T}$-equivalence and $^{*}$-equivalence for a pair of matrices}

In this section we consider pairs of matrices $(E,Q)$ and the canonical forms under the following equivalence relations.
A pair of matrices $(E,Q) \in (M_{m \times n}(\mathbb C),M_{m \times n}(\mathbb C))$ is said to be \emph{$^{T}$-equivalent} (respectively, \emph{$^{*}$-equivalent}) to ($E_1$,$Q_1$) if there exist $U \in GL_{m}(\mathbb C)$, $V  \in GL_{n}(\mathbb C)$ such that $(E_1,Q_1) = (UEV,U^{-T}QV)$ (respectively, $(E_1,Q_1) = (UEV,U^{-*}QV)$).
%
%
%
We now consider two cases. First we study the case where at least one of $E$ and $Q$ is nonsingular and after this the case that both $E$ and $Q$ are singular.

\subsection{Canonical forms for $E$ or $Q$ nonsingular}
The case $Q = I$ in \eqref{LTI_pH} arises in a variety of applications, e.g., modeling of a simple RLC network \cite[Example 1]{MeMW18}, space discretization of the Stokes and Oseen equations in fluid dynamics \cite[Example 2]{MeMW18}, space discretization of the Euler equation describing the flow in a gas network \cite[Example 3]{MeMW18}. So, in this section, we present a canonical form for $(E,Q)\in (M_{n}(\mathbb C),M_{n}(\mathbb C))$  under $^{T}$-equivalence and $^{*}$-equivalence transformations, in the case when at least one of $E$ and $Q$ is nonsingular which generalizes the situation that $Q=I$. Before that, we present  necessary and sufficient conditions for two pairs of complex matrices to be $^{T}$-equivalent and $^{*}$-equivalent.

\begin{lemma}\label{condition}
Let $E,Q, E_1,Q_1\in M_{n}(\mathbb C)$ and $E$ or $Q$ be a nonsingular matrix. Then, the following statements are equivalent.
\begin{enumerate}
    \item[(a)] $(E,Q)$ is $^{T}$-equivalent (respectively, $^{*}$-equivalent) to ($E_1$,$Q_1$).
    \item[(b)] $E^{T}Q$ (respectively, $E^{*}Q$) is $^{T}$-congruent (respectively, $^{*}$-congruent) to $E_{1}^{T}Q_{1}$ (respectively, $E_{1}^{*}Q_{1}$).
    \item[(c)] $EQ^{-1}$ is $^{T}$-congruent (respectively, $^{*}$-congruent) to $E_{1}Q_{1}^{-1}$, provided that $Q$ is
    nonsingular.
\end{enumerate}
\end{lemma}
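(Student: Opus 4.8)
The plan is to prove the chain of implications (a)$\Rightarrow$(c)$\Rightarrow$(b)$\Rightarrow$(a) in the case $Q$ nonsingular, and then separately handle the case $E$ nonsingular (where (c) is vacuous) by the loop (a)$\Leftrightarrow$(b) using a symmetric argument. I will write the $^T$-version in detail; the $^*$-version is obtained throughout by replacing transpose with conjugate transpose and $^T$-congruence with $^*$-congruence, with no structural change.

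First, (a)$\Rightarrow$(c) when $Q$ is nonsingular: suppose $(E_1,Q_1)=(UEV,U^{-T}QV)$ with $U,V$ nonsingular. Then $Q_1$ is nonsingular, and a direct computation gives $E_1Q_1^{-1}=(UEV)(U^{-T}QV)^{-1}=UEV V^{-1}Q^{-1}U^{T}=U(EQ^{-1})U^{T}$, so $EQ^{-1}$ and $E_1Q_1^{-1}$ are $^T$-congruent via $S=U^T$. Next, (c)$\Rightarrow$(b): if $S^T(EQ^{-1})S=E_1Q_1^{-1}$, then multiplying on the left by $Q^T$... more carefully, write $E^TQ = Q^T(EQ^{-1})^T Q$ — indeed $(EQ^{-1})^T = Q^{-T}E^T$ so $Q^T(EQ^{-1})^TQ = Q^T Q^{-T}E^T Q = E^TQ$ — and similarly $E_1^TQ_1 = Q_1^T(E_1Q_1^{-1})^TQ_1$. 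Taking transposes in the hypothesis, $S^T(EQ^{-1})^TS = (E_1Q_1^{-1})^T$ is false in general; instead one uses that $^T$-congruence of $A$ and $B$ is equivalent to $^T$-congruence of $A^T$ and $B^T$ (congruence commutes with transpose), and then conjugates by the change of basis relating $Q$ and $Q_1$. The cleanest route is: from (c), $A:=EQ^{-1}$ and $A_1:=E_1Q_1^{-1}$ are $^T$-congruent; since $E^TQ$ is $^T$-congruent to $A^T$ (via $S=Q$: $Q^T A^T Q = Q^T Q^{-T}E^T Q = E^T Q$) and likewise $E_1^TQ_1$ is $^T$-congruent to $A_1^T$, and $^T$-congruence is preserved under transposition and is an equivalence relation, we conclude $E^TQ$ is $^T$-congruent to $E_1^TQ_1$.

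For (b)$\Rightarrow$(a), assume $E^TQ$ is $^T$-congruent to $E_1^TQ_1$; I want nonsingular $U,V$ with $E_1=UEV$ and $Q_1=U^{-T}QV$. The key observation is that these two equations are equivalent to $V = E^{-1}U^{-1}E_1$ (when $E$ is nonsingular) or $V=Q^{-1}U^TQ_1$ (when $Q$ is nonsingular), and consistency of the two forces exactly a congruence condition. Concretely, in the case $Q$ nonsingular: set $U^T=S$ where $S^T(EQ^{-1})S = E_1Q_1^{-1}$, and define $V := Q^{-1}U^TQ_1 = Q^{-1}SQ_1$; then $U^{-T}QV = S^{-1}Q Q^{-1}SQ_1 = Q_1$ automatically, and it remains to check $UEV=E_1$, i.e. $S^T E Q^{-1}SQ_1 = E_1$, i.e. $(S^T(EQ^{-1})S)Q_1 = E_1$, which is exactly the hypothesis multiplied on the right by $Q_1$. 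So $(E,Q)$ is $^T$-equivalent to $(E_1,Q_1)$. The case $E$ nonsingular is symmetric: reduce to a statement about $Q E^{-1}$ (or note $E^T Q$ $^T$-congruent to $E_1^T Q_1$ iff $Q^T E$ $^T$-congruent to $Q_1^T E_1$ by transposition, then run the same argument with the roles of $E$ and $Q$ swapped, defining $U^T$ from the congruence of $QE^{-1}$ and $V$ accordingly).

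I expect the main obstacle to be bookkeeping the transposes consistently: the awkward point is that $^T$-congruence involves $S^T A S$, not $S^{-1}AS$, so $EQ^{-1}\mapsto U(EQ^{-1})U^T$ rather than a similarity, and one must be careful that $E^TQ$ — not $QE^T$ or $(EQ^{-1})$ itself — is the congruence-invariant object; verifying the identity $E^TQ = Q^T(EQ^{-1})^TQ$ and its analogue, and checking that the constructed $U,V$ are nonsingular (immediate, since $S$, $Q$, $Q_1$ are all nonsingular), is the crux. For the $^*$-version the only subtlety is that $U^{-*}QV$ replaces $U^{-T}QV$ and $(U^{-*})^{-1}=U^{*}$, so one sets $U^{*}=S$ with $S^{*}(EQ^{-1})S = E_1Q_1^{-1}$ and $V = Q^{-1}U^{*}Q_1$; everything else goes through verbatim. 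Finally I would remark that the case where both $E$ and $Q$ are singular genuinely fails this lemma (the products $E^TQ$ and $EQ^{-1}$ no longer capture the full invariant), which is why Section 3.2 is needed.
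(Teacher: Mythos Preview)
Your argument is essentially correct, but there is one bookkeeping slip in the cycle: in your ``(b)$\Rightarrow$(a)'' step you set $U^T=S$ with $S^T(EQ^{-1})S=E_1Q_1^{-1}$, which is hypothesis (c), not (b). As written, you have proved (a)$\Rightarrow$(c)$\Rightarrow$(b) and (c)$\Rightarrow$(a), but not (b)$\Rightarrow$(a) or (b)$\Rightarrow$(c). The fix is easy and already implicit in what you wrote: your (c)$\Rightarrow$(b) argument (the identity $E^TQ=Q^T(EQ^{-1})^TQ$ plus ``congruence commutes with transpose'') is a chain of equivalences, so it equally gives (b)$\Rightarrow$(c); say this explicitly and your cycle closes.

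By contrast, the paper proves (a)$\Leftrightarrow$(b) and (a)$\Leftrightarrow$(c) as two independent equivalences, never passing through (c) to establish (a)$\Leftrightarrow$(b). For (b)$\Rightarrow$(a) with $Q$ nonsingular it works directly from the congruence matrix of $E^TQ$: if $E^TQ=YE_1^TQ_1Y^T$, it takes $U=(Q^{-T}YQ_1^T)^{-1}$ and $V=Y^T$ (and analogously when $E$ is nonsingular). Your detour through $EQ^{-1}$ is just as elementary and has the virtue of making the relation between the two invariants $E^TQ$ and $EQ^{-1}$ explicit; the paper's route is slightly more direct in that (a)$\Leftrightarrow$(b) never needs $Q^{-1}$ to appear on the $E$-side.
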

\begin{proof} We first prove that {\it (a)} is equivalent to {\it (b)} and then prove the equivalence of {\it (a)} and {\it (c)}. It is clear that the nonsingularity of $E_1$ or $Q_1$ follow from the nonsingularity of $E$ or $Q$, respectively. To show that {\it (a)} implies {\it (b)}, assume $(E,Q)$ is $^T$-equivalent to $(E_1,Q_1)$, so there exist $U,V \in GL_{n}(\mathbb C)$ such that $(E_1,Q_1) = (UEV,U^{-T}QV)$. Then $E_{1}^{T}Q_{1} = V^{T}(E^{T}Q)V$ and hence, $E^{T}Q$ is $^{T}$-congruent to $E_{1}^{T}Q_{1}$.

To prove the converse, we first consider a nonsingular $Q$. Since $E^TQ$ is $^T$-congruent to $E_1^TQ_1$, there exists $Y \in GL_{n}(\mathbb C)$
such that $E^{T}Q = YE_{1}^{T}Q_{1}Y^{T}$, or equivalently, $E^{T} = YE_{1}^{T}Q_{1}Y^{T}Q^{-1}$, i.e., $E = (Q^{-T}YQ_{1}^{T})E_{1}Y^{T}.$ Note that $Q$ can be written as $QY^{-T}Y^{T} = QY^{-T}Q_{1}^{-1}Q_{1}Y^{T} = (Q^{-T}YQ_{1}^{T})^{-T}Q_{1}Y^{T}.$ Thus $(E,Q)$ is $^{T}$-equivalent to ($E_1$,$Q_1$) via $Q^{-T}YQ_{1}^{T}$ and $Y^{T}$.

For the case when $E$ is nonsingular, $^T$-congruency of $E^TQ$ and $E_1^TQ_1$ states the existence of $Z \in GL_{n}(\mathbb C)$ such that $E^{T}Q = ZE_{1}^{T}Q_{1}Z^{T}$, or equivalently, $Q = (E^{-T}ZE_{1}^{T})Q_{1}Z^{T} = (EZ^{-T}E_{1}^{-1})^{-T}Q_{1}Z^{T}$. Note that $E$ can be written as $(EZ^{-T}E_{1}^{-1})E_{1}Z^{T}$. Thus, $(E,Q)$ is $^{T}$-equivalent to ($E_1$,$Q_1$) via $EZ^{-T}E_{1}^{-1}$ and $Z^{T}$.

It is straightforward to see that  $(a)$ implies $(c)$. For the reverse implication, assume that $EQ^{-1}$ is $^T$-congruent to $E_1Q^{-1}_1$. Then there exists $Z \in GL_{n}(\mathbb C)$ such that $EQ^{-1} = ZE_{1}Q_{1}^{-1}Z^{T}$, or equivalently, $E = ZE_{1}(Q_{1}^{-1}Z^{T}Q)$. Moreover, $Q$ can be written as $Q = Z^{-T}Q_{1}(Q_{1}^{-1}Z^{T}Q)$, and thus, $(E,Q)$ is $^{T}$-equivalent to ($E_1$,$Q_1$) via $Z$ and $Q_{1}^{-1}Z^{T}Q$.

The proof for the necessary and sufficient condition for $^{*}$-equivalence is analogous  by considering conjugate transpose instead of transpose of matrix and $^{*}$-congruence instead of $^{T}$-congruence in the above arguments. 
\end{proof}

The following Theorems~\ref{CCF_*_equi} and \ref{CCF_*_equi_2}  provide canonical forms for a pair of matrices under $^{T}$-equivalence and $^{*}$-equivalence, respectively, in the case when one of the matrices is nonsingular.

\begin{theorem}[\textbf{Canonical form under $^{T}$-equivalence}]\label{CCF_*_equi}
Let $(E,Q)\in (M_{n}(\mathbb C)$, $M_{n}(\mathbb C))$ and assume that at least one of $E$ and $Q$ is nonsingular. Then there exist $U,V\in GL_{n}(\mathbb C)$ such that for nonsingular $E$,
    $$(UEV,U^{-T}QV) = \big(I_n,(\bigoplus_{i = 1}^{p} H_{2m_i}(\mu_i)) \oplus (\bigoplus_{j = 1}^{q} \Gamma_{n_j})\oplus (\bigoplus_{k = 1}^{r}J_{r_k}(0))\big), \text{ and}$$
for nonsingular $Q$,
\begin{equation}\label{to_prove}
    (UEV,U^{-T}QV) = \big(I_{n-v} \oplus (\bigoplus_{k = 1}^{r}J_{r_k}(0)^{T}),(\bigoplus_{i = 1}^{p} H_{2m_i}(\mu_i)) \oplus (\bigoplus_{j = 1}^{q} \Gamma_{n_j}) \oplus I_v\big),
\end{equation}
    where $v = \sum_{k=1}^{r}r_k$, $ 0 \neq \mu \neq (-1)^{n+1}$, and $\mu $ is determined up to replacement by $\mu^{-1}$. The canonical form is uniquely determined up to permutations of pairs of summands.
\end{theorem}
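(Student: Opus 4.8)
The plan is to reduce everything to the $^{T}$-congruence classification of Theorem~\ref{congruence} by means of Lemma~\ref{condition}. For the case where $E$ is nonsingular, I would first normalize the first component: taking $U=E^{-1}$, $V=I_n$ transforms $(E,Q)$ into $(I_n,E^{T}Q)$, so it suffices to put $(I_n,E^{T}Q)$ into canonical form. By Theorem~\ref{congruence} there is $S\in GL_n(\mathbb{C})$ making $S^{T}(E^{T}Q)S$ a direct sum of blocks of Types $0$, I and II; reordering summands, write it as $\mathcal{C}=\bigl(\bigoplus_i H_{2m_i}(\mu_i)\bigr)\oplus\bigl(\bigoplus_j\Gamma_{n_j}\bigr)\oplus\bigl(\bigoplus_k J_{r_k}(0)\bigr)$, the restrictions on the $\mu_i$ being inherited. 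Applying the transformation with $U=S^{-1}$, $V=S$ to $(I_n,E^{T}Q)$ leaves the first component equal to $I_n$ and turns the second into $S^{T}(E^{T}Q)S=\mathcal{C}$, giving the asserted form.

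For the case where $Q$ is nonsingular, I would use Lemma~\ref{condition}(b): it is enough to exhibit a pair $(E_1,Q_1)$ of the shape \eqref{to_prove} with $E_1^{T}Q_1$ being $^{T}$-congruent to $E^{T}Q$, since at least one of $E_1,Q_1$ will be nonsingular. With $\mathcal{C}$ the $^{T}$-congruence canonical form of $E^{T}Q$ as above and $v=\sum_k r_k$, set
\[
E_1=I_{n-v}\oplus\Bigl(\bigoplus_k J_{r_k}(0)^{T}\Bigr),\qquad
Q_1=\Bigl(\bigoplus_i H_{2m_i}(\mu_i)\Bigr)\oplus\Bigl(\bigoplus_j\Gamma_{n_j}\Bigr)\oplus I_v.
\]
A block-by-block multiplication should give $E_1^{T}Q_1=\bigl(\bigoplus_i H_{2m_i}(\mu_i)\bigr)\oplus\bigl(\bigoplus_j\Gamma_{n_j}\bigr)\oplus\bigl(\bigoplus_k J_{r_k}(0)\bigr)=\mathcal{C}$ (using $(J_{r_k}(0)^{T})^{T}=J_{r_k}(0)$ and the block-diagonal structure), so $E_1^{T}Q_1$ is $^{T}$-congruent to $E^{T}Q$; and $Q_1$ is nonsingular because $H_{2m_i}(\mu_i)$ (with $\mu_i\neq0$), $\Gamma_{n_j}$ and $I_v$ are. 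Lemma~\ref{condition} then delivers the $^{T}$-equivalence of $(E,Q)$ with $(E_1,Q_1)$.

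For uniqueness I would argue as follows. Since $^{T}$-equivalence is the action of $GL_n(\mathbb{C})\times GL_n(\mathbb{C})$ it is transitive, so two canonical pairs $^{T}$-equivalent to the same $(E,Q)$ are $^{T}$-equivalent to each other; in either shape at least one component is nonsingular, so Lemma~\ref{condition} turns this into a $^{T}$-congruence between the corresponding products $E_1^{T}Q_1$ and $(E_1')^{T}Q_1'$, which by the above computations are themselves in $^{T}$-congruence canonical form. The uniqueness clause of Theorem~\ref{congruence} then forces the two to share the same multiset of Type $0$, I and II blocks, which reads off as equality of the $\{(I_{2m_i},H_{2m_i}(\mu_i))\}$, $\{(I_{n_j},\Gamma_{n_j})\}$ and $\{(J_{r_k}(0)^{T},I_{r_k})\}$ (and of $v$), i.e.\ equality of the two canonical pairs up to permutation of their paired summands.

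I expect the only genuinely delicate point to be the $Q$-nonsingular case: one has to realize that the Type $0$ part of the $^{T}$-congruence canonical form of $E^{T}Q$ should be transferred to the $E$-slot as \emph{transposed} nilpotent Jordan blocks so that $E_1^{T}Q_1$ reproduces $\mathcal{C}$ on the nose, and one has to double-check that the $Q_1$ so produced is nonsingular, which is exactly what is needed for Lemma~\ref{condition} to be invoked. Everything else is routine bookkeeping plus the two cited classification results.
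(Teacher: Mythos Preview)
Your proposal is correct and follows essentially the same route as the paper: both reduce existence and uniqueness to Theorem~\ref{congruence} via Lemma~\ref{condition}, by exhibiting a pair $(E_1,Q_1)$ of the claimed shape with $E_1^{T}Q_1$ equal to the $^{T}$-congruence canonical form of $E^{T}Q$. Your treatment of the nonsingular~$E$ case is slightly more explicit (the paper just says the same arguments apply), and your phrasing ``the action\ldots is transitive'' should be read as ``the induced equivalence relation is transitive,'' but these are cosmetic points.
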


\begin{proof} 
Let $E,Q\in M_{n}(\mathbb C)$ and assume that $Q$ is nonsingular. We prove the existence of $U,V\in GL_{n}(\mathbb C)$ such that \eqref{to_prove} holds. The same arguments will hold for the case that $E$ is nonsingular.

Let the congruence canonical form of $E^TQ$ be given by
$$\mathcal{C}_{E^TQ} = (\bigoplus_{i = 1}^{p} H_{2m_i}(\mu_i)) \oplus (\bigoplus_{j = 1}^{q} \Gamma_{n_j}) \oplus (\bigoplus_{k = 1}^{p}J_{r_k}(0)).$$ Define $M = I_{n-v} \oplus (\bigoplus_{k = 1}^{r}J_{r_k}(0)^{T}),$ and $N = (\bigoplus_{i = 1}^{p} H_{2m_i}(\mu_i)) \oplus (\bigoplus_{j = 1}^{q} \Gamma_{n_j}) \oplus I_v, v = \sum_{k=1}^{r}r_k, 0 \neq \mu \neq (-1)^{n+1}$, where $\mu  $ is determined up to replacement by $ \mu^{-1}$.
        
Note that $\mathcal{C}_{E^TQ} = M^TN$, and by Theorem \ref{congruence}, $M^TN$ is $^{T}$-congruent to $E^TQ$, so by Lemma \ref{condition}, $(E,Q)$ is $^{T}$-equivalent to $(M,N)$.

{\bf{Uniqueness}}: We prove uniqueness for the case of nonsingular $Q$. The same arguments hold if $E$ is nonsingular. Suppose that there exist $U,V\in GL_{n}(\mathbb C)$ such that $$(UEV,U^{-T}QV) = (I_{n-v} \oplus (\bigoplus_{k = 1}^{r}J_{r_k}(0)^{T}), (\bigoplus_{i = 1}^{p} H_{2m_i}(\mu_i)) \oplus (\bigoplus_{j = 1}^{q} \Gamma_{n_j}) \oplus (\bigoplus_{k = 1}^{r}I_{r_k})),$$
where $v = \sum_{k=1}^{r}r_k, 0 \neq \mu \neq (-1)^{n+1},\text{ and } \mu \text{ is determined up to replacement by } \mu^{-1}.$ If $(E,Q)$ has another canonical form under the same transformation given by
$$
(I_{n-v'} \oplus (\bigoplus_{k = 1}^{r'}J_{r_{k}}(0)^{T}),(\bigoplus_{i = 1}^{p'} H_{2m_{i}}(\mu_{i})) \oplus (\bigoplus_{j = 1}^{q'} \Gamma_{n_{j}}) \oplus (\bigoplus_{k = 1}^{r'}I_{r_{k}}),
$$
then $E^TQ$ is $^{T}$-congruent to $(\bigoplus_{{i} = 1}^{p'} (H_{2m_{i}}(\mu_{i})) \oplus (\bigoplus_{{j} = 1}^{q'} \Gamma_{n_{j}}) \oplus (\bigoplus_{k = 1}^{r'}J_{r_{k}}(0))$. But since the canonical form of $E^TQ$ under $^{T}$-congruence is unique, up to permutation of its canonical blocks, it follows that $v=v'$, $p' = p$, $q'=q$, \text{ and } $r' = r$. Thus, the canonical form is unique up to permutation of canonical pairs.
\end{proof}
The following remark provides another option for choosing some canonical summands in Theorem \ref{CCF_*_equi}. This option avoids using $\Gamma_{k}$ and $H_{2k}(\mu_{i})$, and uses only the Jordan blocks and the skew-identity matrices (up to signs).
\begin{remark}\label{alt_can}
The pairs $(I_k,\Gamma_k)$ and $(I_{2k},H_{2k}(\mu))$ in Theorem~\ref{CCF_*_equi} are $^T$-equivalent to
$$
\left(\begin{bmatrix}
        & & 1\\
        & \udots & \\
        (-1)^{k+1} & &
    \end{bmatrix}, J_k(1) \right) \ \text{and} \ 
    \left(\begin{bmatrix}
        & & 1\\
        & \udots & \\
        1 & &
    \end{bmatrix},J_{k}(\mu) \oplus I_k \right),
$$
respectively, 
\end{remark}
Similarly to Theorem \ref{CCF_*_equi} but considering the conjugate transpose instead of the transpose and $^*$-congruence instead of $^T$-congruence, the following result is immediate.
\begin{theorem}[\textbf{Canonical form under $^{*}$-equivalence}]\label{CCF_*_equi_2}
Let $(E,Q)\in (M_{n}(\mathbb C),M_{n}(\mathbb C))$ and at least one of $E$, $Q$ is nonsingular. Then there exist $U,V\in GL_{n}(\mathbb C)$ such that for nonsingular $E$,
$$
(UEV,U^{-*}QV) = (I_n,(\bigoplus_{i = 1}^{p} H_{2m_i}(\mu_i)) \oplus (\bigoplus_{j = 1}^{q}e^{i\theta_{j}} \Delta_{n_j})\oplus (\bigoplus_{k = 1}^{s}J_{r_k}(0)),
$$
and for nonsingular $Q$,
$$
(UEV,U^{-*}QV) = (I_{n-v} \oplus (\bigoplus_{k = 1}^{s}J_{r_k}(0)^{*}),(\bigoplus_{i = 1}^{p} H_{2m_i}(\mu_i)) \oplus (\bigoplus_{j = 1}^{q}e^{i\theta_{j}} \Delta_{n_j})\oplus I_v),
$$ 
where $v = \sum_{k=1}^{s}r_k$, $|\mu_i| > 1,\text{ } 0\le \theta_j < 2\pi$. The canonical form is uniquely determined up to permutations of pairs of summands.
\end{theorem}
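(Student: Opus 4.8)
The plan is to mirror the proof of Theorem~\ref{CCF_*_equi} step by step, substituting the conjugate transpose for the transpose and $^*$-congruence for $^T$-congruence throughout. The only nontrivial inputs are the $^*$-analogue of Lemma~\ref{condition} (whose proof, as already observed in the excerpt, goes through verbatim with $Y^*, Z^*$ in place of $Y^T, Z^T$) and the $^*$-congruence canonical form of Theorem~\ref{*congruence}. So the skeleton is: (i) reduce the existence claim to a statement about the $^*$-congruence class of $E^*Q$ (when $Q$ is nonsingular) or of $E^*Q$ viewed appropriately (when $E$ is nonsingular) via the $^*$-version of Lemma~\ref{condition}; (ii) write down the $^*$-congruence canonical form $\mathcal{C}_{E^*Q}$ using Theorem~\ref{*congruence}, which supplies blocks $J_{r_k}(0)$, $e^{i\theta_j}\Delta_{n_j}$ with $0\le\theta_j<2\pi$ (absorbing the Type~I$'$ unimodular scalar $\lambda=e^{i\theta}$), and $H_{2m_i}(\mu_i)$ with $|\mu_i|>1$ (Type~II$'$); (iii) exhibit a factorization $\mathcal{C}_{E^*Q}=M^*N$ with $M,N$ of the claimed shapes; (iv) conclude $^*$-equivalence of $(E,Q)$ to $(M,N)$; (v) prove uniqueness by pulling back any second canonical form to a $^*$-congruence canonical form of $E^*Q$ and invoking uniqueness in Theorem~\ref{*congruence}.

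For step (iii), the key observation is the elementary identity that for the Type~I$'$ and Type~II$'$ blocks we have $I_k^{*}\,(e^{i\theta}\Delta_k)=e^{i\theta}\Delta_k$ and $I_{2k}^{*}\,H_{2k}(\mu)=H_{2k}(\mu)$, while for the nilpotent blocks $\bigl(J_{r_k}(0)^{*}\bigr)^{*} I_{r_k}=J_{r_k}(0)$; hence setting $M=I_{n-v}\oplus\bigl(\bigoplus_{k=1}^{s}J_{r_k}(0)^{*}\bigr)$ and $N=\bigl(\bigoplus_{i=1}^{p}H_{2m_i}(\mu_i)\bigr)\oplus\bigl(\bigoplus_{j=1}^{q}e^{i\theta_j}\Delta_{n_j}\bigr)\oplus I_v$ with $v=\sum_k r_k$ gives $M^*N=\mathcal{C}_{E^*Q}$ after a block permutation. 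Since $Q$ is nonsingular, $N$ is nonsingular, so this is exactly the situation covered by part (c)/(b) of the $^*$-analogue of Lemma~\ref{condition}, and $(E,Q)$ is $^*$-equivalent to $(M,N)$. The case of nonsingular $E$ is handled symmetrically: one takes $M=I_n$ and lets $N$ carry the whole canonical form $\mathcal{C}_{E^*Q}=\bigl(\bigoplus H_{2m_i}(\mu_i)\bigr)\oplus\bigl(\bigoplus e^{i\theta_j}\Delta_{n_j}\bigr)\oplus\bigl(\bigoplus J_{r_k}(0)\bigr)$, using $I_n^*N=N$.

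For uniqueness, suppose $(UEV,U^{-*}QV)$ equals one form with parameters $(p,q,s,\{m_i\},\{n_j\},\{r_k\},\{\theta_j\},\{\mu_i\})$ and also a second form with primed parameters. Taking the product of the first entry's conjugate transpose with the second entry in each representation, one recovers in each case a $^*$-congruence canonical form for $E^*Q$ — concretely $\bigl(\bigoplus H_{2m_i}(\mu_i)\bigr)\oplus\bigl(\bigoplus e^{i\theta_j}\Delta_{n_j}\bigr)\oplus\bigl(\bigoplus J_{r_k}(0)\bigr)$ versus its primed analogue, with $J_{r_k}(0)^{**}I_{r_k}=J_{r_k}(0)$ accounting for the nilpotent part. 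Theorem~\ref{*congruence} then forces these two direct sums to agree up to permutation of summands, which yields $v=v'$, $s=s'$, $p=p'$, $q=q'$, the equality of the multisets of block sizes, and equality of the $\mu_i$ and of the $\theta_j$ (the latter genuinely, since for Type~I$'$ the scalar $\lambda=e^{i\theta}$ is part of the invariant and is not ambiguous, unlike the $\mu\mapsto\mu^{-1}$ freedom in the $^T$ case which here disappears because $|\mu|>1$ pins $\mu$ down). Hence the canonical pairs coincide up to permutation.

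I expect the only real friction to be bookkeeping rather than mathematics: one must be careful that the Type~I$'$ scalar is written as $e^{i\theta_j}$ with $0\le\theta_j<2\pi$ (matching Theorem~\ref{*congruence}'s $|\lambda|=1$), that $H_{2m_i}(\mu_i)$ is normalized by $|\mu_i|>1$ so there is no residual ambiguity in $\mu_i$, and that the index ranges ($q$ for the $\Delta$-blocks, $s$ for the nilpotent blocks, $p$ for the $H$-blocks) are consistent between the two displayed forms and with $v=\sum_{k=1}^{s}r_k$. No separate argument is needed for the $^*$-version of Lemma~\ref{condition} since the excerpt already states its proof is analogous; I would simply cite that. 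Thus the proof reduces to: ``Apply Theorem~\ref{*congruence} to $E^*Q$, factor the canonical form as $M^*N$, invoke the $^*$-analogue of Lemma~\ref{condition}, and read off uniqueness from the uniqueness in Theorem~\ref{*congruence}.''
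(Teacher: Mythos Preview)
Your proposal is correct and follows exactly the route the paper takes: the paper does not give a separate proof of Theorem~\ref{CCF_*_equi_2} but simply states that it is immediate from Theorem~\ref{CCF_*_equi} by replacing transpose with conjugate transpose and $^T$-congruence with $^*$-congruence, which is precisely your plan (i)--(v). Your write-up is in fact more detailed than the paper's own treatment.
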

In this section we have derived canonical forms for pairs $(E,Q)$ where either $E$ or $Q$ is nonsingular. In the following subsection we specialize these results for the case of structured products. 
\subsubsection{Structured $E^{*}Q$/$E^{T}Q$}
As discussed earlier, the Hamiltonian $\mathcal{H}(x) = \dfrac{1}{2}x^{*}E^{*}Qx$ describes the distribution of \mbox{\rm inter}nal energy among energy storage elements of the system in \eqref{LTI_pH}, so, $E^{*}Q$ is typically assumed to be Hermitian for $\mathcal{H}(x)$ to be real. On the other hand in the context of port-Hamiltonian modelling via Dirac structures, the case that $EQ^*$ is 
skew-Hermitian arises, see \cite{SchM18,SchM23}.

In the following, we therefore present Theorems \ref{E^TQ is symmetric}-\ref{E^*Q_is_sk-hermi}, that are special cases of Theorems \ref{CCF_*_equi} and \ref{CCF_*_equi_2}, for $E^{T}Q/E^{*}Q$ being (skew-)symmetric res\-pectively (skew-)Hermitian. In Section \ref{sec:sing} we extend these results to the case of singular $E$ and $Q$.  We begin with the
$^T$ case.
%
\begin{theorem}\label{E^TQ is symmetric}
Let $(E,Q)\in (M_{n}(\mathbb C),M_{n}(\mathbb C))$ with $E^TQ$  symmetric and at least one of $E$ and $Q$ is nonsingular. Then there exist $U,V\in GL_{n}(\mathbb C)$, such that
\begin{multline*}
(UEV,U^{-T}QV) = \bigoplus_{i=1}^n(A_i,B_i), \\
\text{where }  (A_i,B_i ) = 
\begin{cases}
    (1,1),(1,0), & \text{for nonsingular $E$, }\\
    (1,1), (0,1), & \text{ for nonsingular $Q$.}
\end{cases} 
\end{multline*}
These canonical forms are uniquely determined up to permutation of summands.
\end{theorem}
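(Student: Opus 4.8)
The strategy is to reduce everything to the canonical form already obtained in Theorem~\ref{CCF_*_equi} together with the structural constraint that $E^TQ$ is symmetric. By Lemma~\ref{condition}, $(E,Q)$ being $^T$-equivalent to a pair $(M,N)$ is the same as $E^TQ$ being $^T$-congruent to $M^TN$, so it suffices to identify which congruence-canonical matrices (Type 0, I, II from Theorem~\ref{congruence}) can occur as $E^TQ$ when that product is \emph{symmetric}, and to exhibit for each a pair of $1\times 1$ blocks of the claimed shape whose product realizes it.

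First I would record that symmetry of $E^TQ$ is invariant under the transformation: if $E_1 = UEV$ and $Q_1 = U^{-T}QV$ then $E_1^TQ_1 = V^T(E^TQ)V$, so $E_1^TQ_1$ is symmetric iff $E^TQ$ is. Hence the canonical pair $(M,N)$ produced by Theorem~\ref{CCF_*_equi} must itself satisfy $M^TN$ symmetric. Take the nonsingular-$E$ case first: there $(M,N) = (I_n, \mathcal C)$ with $\mathcal C = \bigoplus H_{2m_i}(\mu_i)\oplus\bigoplus\Gamma_{n_j}\oplus\bigoplus J_{r_k}(0)$, and $M^TN = \mathcal C$, so the requirement is simply that the $^T$-congruence canonical form of $E^TQ$ be \emph{symmetric as written}. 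A symmetric matrix is $^T$-congruent only to a symmetric canonical form; among the canonical types, $\Gamma_n$ is symmetric only for $n=1$ (for $n\ge 2$ it has the antidiagonal $\pm1$ pattern with the extra subdiagonal, which is not symmetric), $H_{2n}(\mu)$ is never symmetric (the off-diagonal blocks $I_n$ and $J_n(\mu)$ differ), and $J_n(0)$ is symmetric only for $n=1$. Therefore $E^TQ$ is $^T$-congruent to a direct sum of $[1]$'s and $[0]$'s — i.e.\ $E^TQ$ has rank $n-\nu$ for some $\nu$ and its canonical form is $I_{n-\nu}\oplus 0_\nu$. This forces the canonical pair to be $\bigoplus_{i=1}^{n-\nu}(1,1)\oplus\bigoplus_{i=1}^{\nu}(1,0)$, which is the asserted form for nonsingular $E$. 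For the nonsingular-$Q$ case, $M^TN$ with the roles swapped gives the pair $\bigoplus(1,1)\oplus\bigoplus(0,1)$ by the same rank count (the Jordan blocks $J_{r_k}(0)^T$ and identity factors now producing the $(0,1)$ summands). Uniqueness follows because the number of $(1,1)$ summands equals $\rank E^TQ$ and the number of $(1,0)$ (resp.\ $(0,1)$) summands equals its corank, both invariants of the congruence class.

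The main obstacle — and the only place requiring genuine care — is the claim that among the three congruence-canonical types the only \emph{symmetric} representatives are the $1\times 1$ blocks $[1]$ (Type I with $n=1$) and $[0]$ (Type 0 with $n=1$). One has to verify that a direct sum of canonical blocks is symmetric iff every summand is symmetric (immediate, since symmetry is blockwise) and then rule out the larger blocks: $J_n(0)$ for $n\ge 2$ has a $1$ above the diagonal and $0$ below, hence is not symmetric; $H_{2n}(\mu)=\left[\begin{smallmatrix}0&I_n\\ J_n(\mu)&0\end{smallmatrix}\right]$ has transpose $\left[\begin{smallmatrix}0&J_n(\mu)^T\\ I_n&0\end{smallmatrix}\right]$, and $I_n = J_n(\mu)^T$ is impossible; and $\Gamma_n$ for $n\ge 2$ has the entry $1$ in position $(n,n-1)$ but $-1$ in position $(n-1,n)$ (from the displayed pattern), so it is not symmetric. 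A small check is also needed that $\Gamma_1=[1]$ and $J_1(0)=[0]$ are genuinely the forms appearing, which is immediate from the normalizations stated after the definition of the matrices. Once this classification is in hand, the theorem is just the specialization of Theorem~\ref{CCF_*_equi} read off block by block.
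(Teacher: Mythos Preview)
Your proof is correct and follows essentially the same route as the paper's: both invoke Theorem~\ref{CCF_*_equi} (via Lemma~\ref{condition}) and then observe that, since symmetry of $E^TQ$ is preserved under $^T$-congruence, the only canonical blocks that can appear are $\Gamma_1=[1]$ and $J_1(0)=[0]$, yielding the claimed $1\times 1$ summands with multiplicities determined by $\rank(E^TQ)$. One cosmetic slip: the specific entries you cite to witness non-symmetry of $\Gamma_n$ are correct only for $n=2$ (for $n\ge 3$ compare, e.g., positions $(n,1)$ and $(1,n)$, which carry $1$ and $(-1)^{n+1}$ on the antidiagonal versus the asymmetric sub-antidiagonal), but this does not affect the argument.
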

\begin{proof}
    We prove the result for the case of nonsingular $Q$. The result for the case of nonsingular $E$ follows using similar arguments. $E^TQ$ is $^{T}$-congruent to its canonical congruence form, $\mathcal{C}_{E^TQ} = (\bigoplus_{i = 1}^{p} H_{2m_i}(\mu_i)) \oplus (\bigoplus_{j = 1}^{q} \Gamma_{n_j}) \oplus (\bigoplus_{k = 1}^{r}J_{r_k}(0)), 0 \neq \mu \neq (-1)^{m_i+1}$, where $ \mu$  is determined up to replacement by $\mu^{-1}.$ Let $v = \sum_{k = 1}^r r_k$, i.e., $v = n-\rank(E^TQ)$. Since $\mathcal{C}_{E^TQ}$ is symmetric, $\mathcal{C}_{E^TQ} = (\bigoplus_{j = 1}^{q}\Gamma_1) \oplus (\bigoplus_{k = 1}^{v}J_{1}(0))$ which can be written as $I_{n-v} \oplus \begin{bmatrix}
        0
    \end{bmatrix}_v$. Setting $N = I_{n},$ $M = I_{n-v} \oplus (\bigoplus_{k = 1}^{v}J_{1}(0))$, we get $\mathcal{C}_{E^TQ} = M^TN$ and hence the result follows as in Theorem \ref{CCF_*_equi}.
\end{proof}
Next we derive a result analogous to Theorem \ref{E^TQ is symmetric} but under $^*$-equivalence instead of $^T$-equivalence. Furthermore, the special case of $E^*Q$ being positive semi-definite is also considered because of its importance in port-Hamiltonian systems with nonnegative Hamiltonian and in classical first order representations of linear damped mechanical systems \cite{MeMW18,TiMe01,Vese11}. The canonical pairs are presented in Theorems \ref{E^*Q is hermitian} and \ref{E^*Q_is_sk-hermi}. 
\begin{theorem}\label{E^*Q is hermitian}
    Let $(E, Q) \in (M_{n}(\mathbb C),M_{n}(\mathbb C))$ with $E^*Q$  Hermitian and at least one of $E$, $Q$ is nonsingular.  There exist $U,V \in GL_{n}(\mathbb C)$ such that
\begin{multline*}
    (UEV,U^{-*}QV) = \bigoplus_{i=1}^n(A_i,B_i),\\
\text{where }  (A_i,B_i ) = 
\begin{cases}
    (1,1),(1,-1),(1,0), & \text{for nonsingular $E$,}\\
    (1,1),(1,-1), (0,1), & \text{ for nonsingular $Q$.}
\end{cases}
\end{multline*}
In addition, if $E^*Q$ is positive semi-definite, then $(1,-1)$ is no longer a canonical block. The above canonical form is uniquely determined up to permutation of summands.
\end{theorem}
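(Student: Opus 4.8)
The plan is to mimic the proof of Theorem~\ref{E^TQ is symmetric}, with $^{*}$-congruence in place of $^{T}$-congruence and with Sylvester's law of inertia playing the role of the normal form for symmetric matrices. It suffices to treat the case of nonsingular $Q$ in detail: the nonsingular-$E$ case is entirely analogous, swapping the roles of the two matrices in the pair exactly as in Theorem~\ref{CCF_*_equi_2}, and the positive semi-definite refinement will be immediate from the construction.

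First I would record the $^{*}$-congruence canonical form of $E^{*}Q$. Since $E^{*}Q$ is Hermitian, Sylvester's law of inertia (equivalently, the observation that the only Hermitian canonical blocks in Theorem~\ref{*congruence} are the $1\times1$ Type~0 block $J_1(0)=[0]$ and the $1\times1$ Type~I\'{} blocks $\lambda\Delta_1=[\lambda]$ with $\lambda=\pm1$) shows that $E^{*}Q$ is $^{*}$-congruent to
$$\mathcal{C}_{E^{*}Q}=I_{\pi}\oplus(-I_{\nu})\oplus 0_{\delta},$$
where $\pi$ and $\nu$ are the numbers of positive and negative eigenvalues of $E^{*}Q$ and $\delta=n-\rank(E^{*}Q)=n-\pi-\nu$. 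Next I would exhibit a pair $(M,N)$ with $M^{*}N=\mathcal{C}_{E^{*}Q}$ and $N$ nonsingular, of the desired block shape: set $N=I_{\pi}\oplus(-I_{\nu})\oplus I_{\delta}$ and $M=I_{\pi+\nu}\oplus 0_{\delta}$, so that $M^{*}N=I_{\pi}\oplus(-I_{\nu})\oplus 0_{\delta}=\mathcal{C}_{E^{*}Q}$ and $N\in GL_{n}(\mathbb C)$. Blockwise, $(M,N)=\big(\bigoplus_{i=1}^{\pi}(1,1)\big)\oplus\big(\bigoplus_{i=1}^{\nu}(1,-1)\big)\oplus\big(\bigoplus_{i=1}^{\delta}(0,1)\big)$. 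Because $\mathcal{C}_{E^{*}Q}$ is $^{*}$-congruent to $E^{*}Q=M^{*}N$ and $N$ is nonsingular, Lemma~\ref{condition} yields that $(E,Q)$ is $^{*}$-equivalent to $(M,N)$, which is the claimed form; if moreover $E^{*}Q\ge 0$ then $\nu=0$ and the summand $(1,-1)$ is absent. For nonsingular $E$ I would instead take $M=I_{n}$ and $N=\mathcal{C}_{E^{*}Q}$, producing summands $(1,1)$, $(1,-1)$, $(1,0)$.

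For uniqueness I would argue that $\pi$, $\nu$, $\delta$ are invariants of the $^{*}$-equivalence class: if $(E,Q)$ is $^{*}$-equivalent to another pair $(E_1,Q_1)$ with $E_1$ or $Q_1$ nonsingular, then by Lemma~\ref{condition} $E^{*}Q$ and $E_1^{*}Q_1$ are $^{*}$-congruent, hence have the same inertia and rank, so the triple $(\pi,\nu,\delta)$ coincides; conversely the multiplicities of $(1,1)$, $(1,-1)$, $(0,1)$ (respectively $(1,0)$) in any such canonical pair are exactly $\pi$, $\nu$, $\delta$. Thus the canonical form is determined up to permutation of summands. I do not expect a substantive obstacle here; the only mildly delicate point is the bookkeeping in the construction above, namely distributing the signs between the factors $M$ and $N$ so that the summands emerge as $(1,\pm1)$, $(0,1)$ (or $(1,0)$) rather than, say, $(-1,1)$. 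This is harmless in any case, since $(-1,1)$ is $^{*}$-equivalent to $(1,-1)$ via $U=I$, $V=-I$, but it is cleanest simply to make the choice of $M$ and $N$ indicated above.
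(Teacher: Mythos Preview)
Your proposal is correct and follows essentially the same approach as the paper: both reduce to the $^{*}$-congruence canonical form of the Hermitian matrix $E^{*}Q$ and then invoke Lemma~\ref{condition} to transfer this to a $^{*}$-equivalence normal form for the pair. The only cosmetic difference is that you invoke Sylvester's law of inertia directly, whereas the paper obtains the same diagonal form by specializing Theorem~\ref{CCF_*_equi_2} and checking which of its canonical blocks are Hermitian (forcing $n_j=1$ and $\theta_j\in\{0,\pi\}$); your route is slightly more direct, and your treatment of the positive semi-definite case (simply $\nu=0$) is cleaner than the paper's explicit computation of $x^{*}E^{*}Qx$.
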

\begin{proof}
The proof is similar to the proof method of Theorem \ref{E^TQ is symmetric}, but here we consider the conjugate transpose of a matrix instead of its transpose and $^*$-congruence instead of $^T$-congruence. Thus, we restrict ourselves to Hermitian canonical blocks in Theorem \ref{CCF_*_equi_2}. Note, that $\Delta_n$ is symmetric but not Hermitian. Also, $e^{i\theta_{j}} = \overline{e^{i\theta_{j}}}$ if and only if the imaginary part of the complex number is zero, i.e., $\theta_{j} = n\pi$, $n$ is an integer. But by Theorem \ref{CCF_*_equi_2}, $\theta_{j} \in [0,2\pi)$ which implies that $\theta_{j} \in \{0, \pi\}$.  Thus, after permuting the rows and respective columns (if needed), we have, $\bigoplus_{j = 1}^{n-v}e^{i\theta_{j}} = I_x \oplus (-I_y)$, where $x+y = n-v$ and $v =\rank(E^*Q)$.

For the special case that $E^*Q$ is positive semi-definite and $Q$ is nonsingular, from the previous arguments, there exist $U,V \in GL_{n}(\mathbb C)$ such that $(UEV,U^{-*}QV) = (I_{n-v} \oplus [0]_v,(\bigoplus_{j = 1}^{n-v}e^{i\theta_{j}}) \oplus I_{v})$,  where $\theta_j \in \{0,\pi\}$. Thus, $E^*Q = V^{-*}((\bigoplus_{j = 1}^{n-v}e^{i\theta_{j}}) \oplus [0]_v)V^{-1}$, i.e., for all $x\in \mathbb{C}^n$, $x^*E^*Qx = x^*V^{-*}((\bigoplus_{j = 1}^{n-v}e^{i\theta_{j}}) \oplus [0]_v)V^{-1}x = y^{*}((\bigoplus_{j = 1}^{n-v}e^{i\theta_{j}}) \oplus [0]_v)y 
    =  e^{i\theta_{1}}|y_1|^2 + \dots + e^{i\theta_{n-v}}|y_{n-v}|^2$, where $y = V^{-1}x$ and $y = [y_i]_{i = 1}^n$. For all $x \in \mathbb{C}^n$, the above equality is nonnegative if $e^{i\theta_{j}} = 1$, i.e., $\theta_{j} = 0$, for all $j$. Similar arguments hold when $E$ is nonsingular. Thus, the assertion follows.
\end{proof}

Using similar arguments as in the previous two theorems, we provide a canonical form when $E^TQ$ is skew-symmetric. 
\begin{theorem}\label{K,L_can_form}
    Let $(E,Q)\in (M_{n}(\mathbb C),M_{n}(\mathbb C))$, $E^TQ$  skew-symmetric and at least one of $E$, $Q$ is nonsingular. Then there exist $U,V\in GL_{n}$ such that
    \begin{multline*}(UEV,U^{-T}QV) = \bigoplus_{i=1}^f(A_i,B_i), \\
    \text{where } (A_i,B_i) \in \begin{cases}
        (I_2,H_2(-1)),(1,0), & \text{  for nonsingular $E$,}\\
        (I_2,H_2(-1)),(0,1), & \text{  for nonsingular $Q$,}
    \end{cases}  \text{ and } f\in \mathbb{Z}_{+}.
    \end{multline*}
    This canonical form is uniquely determined up to permutation of summands.
\end{theorem}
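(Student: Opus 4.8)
The plan is to follow the template of the proofs of Theorems~\ref{E^TQ is symmetric} and~\ref{E^*Q is hermitian}: pass to the $^T$-congruence canonical form $\mathcal{C}_{E^TQ}$ of $E^TQ$ provided by Theorem~\ref{congruence}, determine which canonical summands can occur once we impose that $E^TQ$ is skew-symmetric, and then read off the pair canonical form through Lemma~\ref{condition}. I would carry out the case in which $Q$ is nonsingular in detail; the case of nonsingular $E$ is identical except that the first component of the pair is normalized to $I_n$, and the subcase with both $E$ and $Q$ nonsingular is just the situation in which no singular summand appears (then $\operatorname{rank}(E^TQ)=n$ is forced to be even).

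First I would note that if $\mathcal{C}_{E^TQ}=S^T(E^TQ)S$ with $S$ nonsingular, then $\mathcal{C}_{E^TQ}$ is skew-symmetric because $E^TQ$ is; and since a square block-diagonal matrix is skew-symmetric precisely when each of its diagonal blocks is, every canonical summand of $\mathcal{C}_{E^TQ}$ must itself be skew-symmetric. I would then run through the three block types of Theorem~\ref{congruence}: $J_n(0)$ is skew-symmetric only for $n=1$, where $J_1(0)=[0]$; each $\Gamma_n$ has a nonzero entry on its main diagonal (the central anti-diagonal entry when $n$ is odd, the central entry of the adjacent diagonal strip when $n$ is even) and hence is never skew-symmetric; and $H_{2n}(\mu)^T=-H_{2n}(\mu)$ forces $J_n(\mu)=-I_n$, i.e. $n=1$ and $\mu=-1$, with $H_2(-1)$ being a legitimate Type~II block since $-1\neq0,1$. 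Consequently $\mathcal{C}_{E^TQ}=\bigl(\bigoplus_{i=1}^{g}H_2(-1)\bigr)\oplus[0]_v$ with $2g+v=n$ and $v=n-\operatorname{rank}(E^TQ)$.

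Next, setting $M=I_{2g}\oplus 0_v$ and $N=\bigl(\bigoplus_{i=1}^{g}H_2(-1)\bigr)\oplus I_v$, one has $M^TN=\mathcal{C}_{E^TQ}$, which is $^T$-congruent to $E^TQ$; since $N$ (as well as $Q$) is nonsingular, Lemma~\ref{condition} gives that $(E,Q)$ is $^T$-equivalent to $(M,N)$. As $M$ and $N$ are block-diagonal with the same block sizes $(2,\dots,2,1,\dots,1)$, the pair $(M,N)$ is literally the direct sum $\bigl(\bigoplus_{i=1}^{g}(I_2,H_2(-1))\bigr)\oplus\bigl(\bigoplus_{k=1}^{v}(0,1)\bigr)$, which is the asserted form with $f=g+v$. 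For nonsingular $E$ I would instead take $M=I_n$, $N=\mathcal{C}_{E^TQ}$, getting $\bigl(\bigoplus_{i=1}^{g}(I_2,H_2(-1))\bigr)\oplus\bigl(\bigoplus_{k=1}^{v}(1,0)\bigr)$, exactly as in Theorem~\ref{CCF_*_equi}. For uniqueness, if $(E,Q)$ were $^T$-equivalent to two such direct sums, Lemma~\ref{condition}(b) would make the products $A^TB$ of the two forms $^T$-congruent; but for a direct sum of the stated blocks $A^TB=\bigl(\bigoplus_{g}H_2(-1)\bigr)\oplus 0_v$, where $g$ is the number of $(I_2,H_2(-1))$-blocks and $v$ the number of $(1,0)$- (respectively $(0,1)$-) blocks, so the uniqueness of the $^T$-congruence canonical form forces the two forms to agree up to permutation of summands.

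The only genuine work is the block classification in the second paragraph, in particular verifying that no $\Gamma_n$ is skew-symmetric (by exhibiting its nonzero central main-diagonal entry) and that $H_2(-1)$ is the unique skew-symmetric $H$-block and is admissible as a Type~II summand; everything else is a verbatim adaptation of the argument for $E^TQ$ symmetric.
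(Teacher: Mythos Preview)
Your proposal is correct and follows essentially the same route as the paper: reduce to the $^T$-congruence canonical form of $E^TQ$, cut down the admissible blocks using skew-symmetry, factor as $M^TN$, and invoke Lemma~\ref{condition} and Theorem~\ref{CCF_*_equi}. You supply more detail than the paper on the block classification (why only $J_1(0)$ and $H_2(-1)$ survive) and on uniqueness, but the underlying argument is the same.
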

\begin{proof}
    We prove the result for the case of nonsingular $Q$. The result for the other case follows similarly.  Following the proof method of Theorem \ref{E^TQ is symmetric} and using the skew-symmetry, we get that $E^TQ$ is $^{T}$-congruent to $\mathcal{C}_{E^TQ} = (\bigoplus_{j = 1}^{p} H_{2}(\mu)) \oplus (\bigoplus_{k = 1}^{v}J_{1}(0))$, $\mu = -1$, $2p =\rank(E^TQ)$ and $v = n-2p$. Defining $N = (\bigoplus_{j = 1}^{p} H_{2}(-1)) \oplus I_{v},$ $M = I_{2p} \oplus [0]_v$, we get $\mathcal{C}_{E^TQ} = M^TN$ and hence the result follows as in Theorem \ref{CCF_*_equi}.
\end{proof}

Result analogous to Theorem \ref{K,L_can_form} under $^*$-equivalence instead of $^T$-equivalence is presented in the next theorem.
\begin{theorem}\label{E^*Q_is_sk-hermi}
    Let $(E, Q) \in (M_{n}(\mathbb C),M_{n}(\mathbb C))$ and $E^*Q$ is skew-Hermitian and at least one of $E$ or $Q$ is nonsingular.  There exist $U,V \in GL_{n}(\mathbb C)$ such that
    \begin{multline*}
        (UEV,U^{-*}QV) = \bigoplus_{i=1}^n(A_i,B_i),\\
        \text{where } (A_i,B_i) \in \begin{cases}
        (1,i), (1,-i), (1,0), & \text{ for nonsingular $E$,}\\
        (1,i), (1,-i), (0,1), & \text{ for nonsingular $Q$.}
    \end{cases} 
    \end{multline*}
     The above canonical form is uniquely determined up to permutation of summands. 
\end{theorem}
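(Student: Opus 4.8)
The plan is to mimic the proof of Theorem~\ref{E^TQ is symmetric} (equivalently Theorem~\ref{K,L_can_form}), but carrying out the computation in the $^*$-congruence setting, and to extract from the general canonical form of Theorem~\ref{CCF_*_equi_2} exactly those blocks whose product $M^*N$ is skew-Hermitian. I would first treat the case of nonsingular $Q$ (the nonsingular-$E$ case being symmetric to it, with the roles of $E$ and $Q$ in the canonical pair interchanged as in the earlier theorems). Starting from the fact that $E^*Q$ is $^*$-congruent to its canonical form $\mathcal{C}_{E^*Q}$ as in Theorem~\ref{*congruence}, the key step is to determine which Type~0, Type~I$'$, and Type~II$'$ summands can occur when $\mathcal{C}_{E^*Q}$ is skew-Hermitian, i.e.\ when $\mathcal{C}_{E^*Q}^* = -\mathcal{C}_{E^*Q}$.

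The Type~0 blocks $J_n(0)$ are nilpotent; a nonzero nilpotent Jordan block is not skew-Hermitian unless it is $1\times 1$, so (as in the symmetric case) only $J_1(0)=[0]$ survives, and these contribute the $(0,1)$ pairs. For Type~I$'$ we have summands $e^{i\theta}\Delta_n$; since $\Delta_n$ is complex symmetric (not Hermitian), $(e^{i\theta}\Delta_n)^* = e^{-i\theta}\Delta_n^* = e^{-i\theta}\overline{\Delta_n}$, and the requirement that this equal $-e^{i\theta}\Delta_n$ forces both $n=1$ (so that $\Delta_n = \overline{\Delta_n} = [1]$) and $e^{-i\theta} = -e^{i\theta}$, i.e.\ $e^{2i\theta}=-1$, so $\theta\in\{\pi/2,3\pi/2\}$ and $e^{i\theta}=\pm i$. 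This yields precisely the one-dimensional summands $[\,i\,]$ and $[-i\,]$, which give the canonical pairs $(1,i)$ and $(1,-i)$. For Type~II$'$, $H_{2m}(\mu)$ with $|\mu|>1$ has a Jordan structure that, by the same eigenvalue/nilpotent argument used for Type~0, cannot be skew-Hermitian (its square $H_{2m}(\mu)^2$ has eigenvalue $\mu$ of modulus $>1$, which is impossible for a skew-Hermitian matrix whose eigenvalues are purely imaginary); hence no Type~II$'$ block occurs. Assembling these, $\mathcal{C}_{E^*Q}$ is $^*$-congruent to $(iI_x)\oplus(-iI_y)\oplus[0]_v$ with $x+y+v=n$; setting $N = I_n$ and $M = I_{x+y}\oplus(\bigoplus_{k=1}^{v}J_1(0))$ one checks $\mathcal{C}_{E^*Q}=M^*N$, and Lemma~\ref{condition} gives that $(E,Q)$ is $^*$-equivalent to $(M,N)=\bigoplus(1,i)\oplus\bigoplus(1,-i)\oplus\bigoplus(0,1)$, which is the asserted form. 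Uniqueness follows, exactly as in Theorem~\ref{CCF_*_equi}, from uniqueness of $\mathcal{C}_{E^*Q}$ under $^*$-congruence: the multiplicities $x$, $y$, $v$ are congruence invariants of $E^*Q$.

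The main obstacle I anticipate is purely bookkeeping rather than conceptual: one must be careful that $\Delta_n$ is \emph{symmetric} but not \emph{Hermitian}, so the skew-Hermitian constraint interacts with the scalar $e^{i\theta}$ in a way that differs from the skew-symmetric ($^T$) case of Theorem~\ref{K,L_can_form} (there one gets $H_2(-1)$-type blocks; here one gets the genuinely different $1\times1$ blocks $[\pm i]$). I would double-check the small-size cases directly --- verifying $\Delta_1=[1]$, $\,[\pm i]^*=\mp i=-[\pm i]$, and that no $2\times2$ skew-Hermitian matrix is $^*$-congruent into the Type~I$'$/II$'$ list --- and also confirm that the nonsingular-$E$ version really does just swap the two coordinates of each canonical pair, so that the $(0,1)$ blocks become $(1,0)$ blocks, matching the statement.
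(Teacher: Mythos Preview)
Your approach is essentially the same as the paper's: restrict the general $^*$-equivalence canonical form of Theorem~\ref{CCF_*_equi_2} to those blocks $B$ for which the corresponding product $A^*B$ is skew-Hermitian, obtaining only the $1\times 1$ blocks $[\pm i]$ from Type~I$'$ and $J_1(0)$ from Type~0. The paper's proof is terser (it does not spell out the Type~0 or Type~II$'$ exclusions), but the logic is identical.

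One small slip to fix: in your assembly for the nonsingular-$Q$ case you write ``setting $N = I_n$ and $M = I_{x+y}\oplus(\bigoplus_{k=1}^{v}J_1(0))$ one checks $\mathcal{C}_{E^*Q}=M^*N$'', but with $N=I_n$ this gives $M^*N = I_{x+y}\oplus [0]_v$, not $(iI_x)\oplus(-iI_y)\oplus[0]_v$. You want $N = (iI_x)\oplus(-iI_y)\oplus I_v$ and $M = I_{x+y}\oplus[0]_v$, matching the block structure in Theorem~\ref{CCF_*_equi_2}; then $M^*N = \mathcal{C}_{E^*Q}$ and the pair $(M,N)$ decomposes as $\bigoplus(1,i)\oplus\bigoplus(1,-i)\oplus\bigoplus(0,1)$ as you correctly state. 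Also, your eigenvalue argument for excluding Type~II$'$ is not quite airtight when $\mu$ happens to be a negative real of modulus $>1$ (then $\mu$ \emph{is} a legitimate eigenvalue of the square of a skew-Hermitian matrix); the cleanest fix is the direct check $H_{2m}(\mu)^* = -H_{2m}(\mu)$ forces $J_m(\mu) = -I_m$, hence $m=1$ and $\mu=-1$, contradicting $|\mu|>1$.
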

\begin{proof}
    Using similar arguments as in the proof of Theorem \ref{K,L_can_form}, we restrict ourselves to skew-Hermitian canonical blocks in Theorem \ref{CCF_*_equi_2}. Note that $\Delta_{n_j}$ in Theorem \ref{CCF_*_equi_2} is not skew-Hermitian and $e^{i\theta_{j}} = -\overline{e^{i\theta_{j}}}$ if and only if $\theta_{j} = \frac{(n+1)\pi}{2}$, where $n \in 2\mathbb{Z}$. But by Theorem \ref{CCF_*_equi_2}, $\theta_{j} \in [0,2\pi)$ implying that $\theta_{j} \in \{\frac{\pi}{2},\frac{3\pi}{2}\}$.
\end{proof}
In this subsection we have presented the canonical forms for the situation that one the matrices $E$ or $Q$ are nonsingular. In the next section we extend the results to the case that both $E$ and $Q$ are allowed to be singular.
\subsection{Canonical forms for both $E$ and $Q$  singular}
\label{sec:sing}
Pencils where both $E$ and $Q$ may are singular related to \eqref{LTI_pH} typically arise in applications \cite{GMQSW96,MehS23}, as limiting cases or due to redundancy in system modeling. For this situation we now develop results that allow us to formulate analogues of Theorems \ref{CCF_*_equi} and \ref{CCF_*_equi_2} for singular $E$ and $Q$, and structured $E^TQ/E^*Q$. We start by recalling an auxiliary lemma.
\begin{lemma}\cite{HoMe95}\label{lemma_nonsing}
    Let positive integers $k$, $n$ be given with $k<n$. Let $Y_1 \in M_{n,k}$ and $P\in M_{k,n}$ be given with $PY_1$ nonsingular. Then there exists a $Y_2 \in M_{n,n-k}$ such that $PY_2 = 0 $ and $\begin{bmatrix}
        Y_1 & Y_2
    \end{bmatrix} \in M_{n}(\mathbb C)$ is nonsingular.
\end{lemma}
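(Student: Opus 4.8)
The plan is to prove Lemma~\ref{lemma_nonsing} directly by producing the desired complement $Y_2$ from a basis of the kernel of $P$. First I would observe that since $PY_1$ is nonsingular and $PY_1 \in M_k$, the map $P : \mathbb{C}^n \to \mathbb{C}^k$ is surjective, so $\ker P$ has dimension exactly $n-k$. Pick any matrix $Y_2 \in M_{n,n-k}$ whose columns form a basis of $\ker P$; then $PY_2 = 0$ by construction, and it remains only to check that $\begin{bmatrix} Y_1 & Y_2 \end{bmatrix}$ is nonsingular.

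For the nonsingularity, I would argue that the columns of $Y_1$ together with those of $Y_2$ are linearly independent. Suppose $Y_1 a + Y_2 b = 0$ for vectors $a \in \mathbb{C}^k$, $b \in \mathbb{C}^{n-k}$. Applying $P$ gives $P Y_1 a + P Y_2 b = (PY_1) a + 0 = (PY_1) a = 0$, and since $PY_1$ is nonsingular this forces $a = 0$. Then $Y_2 b = 0$, and since the columns of $Y_2$ are linearly independent (they form a basis of $\ker P$), $b = 0$ as well. Hence the $n$ columns of $\begin{bmatrix} Y_1 & Y_2 \end{bmatrix}$ are linearly independent, so this $n\times n$ matrix is nonsingular, which completes the argument.

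There is essentially no serious obstacle here; the statement is a standard complementation fact. The only point requiring a word of care is confirming that $Y_1$ itself has full column rank $k$ (which it does, as a left factor of the nonsingular matrix $PY_1$) and that $\ker P$ has the right dimension $n-k$ (which follows from surjectivity of $P$, again because $PY_1$ is invertible); both are immediate consequences of $PY_1$ being nonsingular. One could alternatively phrase the proof via rank counting — $\operatorname{rank}\begin{bmatrix} Y_1 & Y_2\end{bmatrix} \ge \operatorname{rank}(PY_1) + \dim\ker P = k + (n-k) = n$ — but the explicit linear-independence argument above is cleaner and self-contained.
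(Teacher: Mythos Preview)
Your argument is correct and entirely standard: choosing the columns of $Y_2$ to be a basis of $\ker P$ and then verifying linear independence of $[Y_1\ Y_2]$ by applying $P$ is exactly the right idea, and all the auxiliary rank/dimension claims you flag are immediate from the invertibility of $PY_1$. Note that the paper itself does not prove this lemma; it simply quotes it from \cite{HoMe95} as a known auxiliary fact, so there is no ``paper's proof'' to compare against beyond observing that your self-contained argument supplies what the paper omits.
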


Then we have the following Lemma~\ref{lemma_true_for_all}.
\begin{lemma}\label{lemma_true_for_all}
    Let $(E,Q) \in (M_{n}(\mathbb C),M_{n}(\mathbb C))$, $0 <\rank(E^TQ) = k < n$. Then there exist $U,V \in GL_{n}(\mathbb C)$ such that 
    $$(UEV,U^{-T}QV) = (I_k \oplus \mathcal{E}, (\bigoplus_{i = 1}^{p} H_{2m_i}(\mu_i)) \oplus (\bigoplus_{j = 1}^{q} \Gamma_{n_j}) \oplus \mathcal{Q}),$$
    where $\mathcal{E}^T\mathcal{Q} = \bigoplus_{s = 1}^{p}J_{r_s}(0)$, $ 0 \neq \mu \neq (-1)^{n+1}$ and $\mu $ is determined up to replacement by $\mu^{-1}$. 
\end{lemma}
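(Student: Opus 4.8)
The plan is to reduce the general (singular) case to the nonsingular case handled in Theorem~\ref{CCF_*_equi} by peeling off a nonsingular $k\times k$ part corresponding to the rank of $E^TQ$, using Lemma~\ref{lemma_nonsing} to extend partial bases to invertible matrices. First I would fix the congruence canonical form of $E^TQ$; since $\rank(E^TQ)=k$, exactly the nilpotent part $\bigoplus_s J_{r_s}(0)$ is singular, and grouping the $J$-blocks of size $\ge 2$ with all remaining blocks gives a splitting of $\mathcal C_{E^TQ}$ into an invertible $k\times k$ block (made of the $H$-, $\Gamma$- and size-$1$ Jordan pieces, suitably reordered) and a complementary nilpotent block. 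Concretely, write $\mathcal C_{E^TQ} = W^T (E^TQ) W$ for some $W\in GL_n(\mathbb C)$, so replacing $E$ by $EW$ and $Q$ by $QW$ (a $^T$-equivalence with $U=I$, $V=W$) we may assume $E^TQ = \mathcal C_{E^TQ}$ is already in canonical form.

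Next, the key structural point: because $\rank(E^TQ)=k$, the column space of $E$ and the column space of $Q$ are compatible in the sense that $\mathrm{rank}\,E \ge k$ and $\mathrm{rank}\,Q \ge k$, and one can choose $k$ columns of $Q$ (say the first $k$, after a permutation absorbed into $V$) so that the corresponding $k\times k$ minor interacts nonsingularly with a matching selection from $E$. Precisely, I would pick $P\in M_{k,n}$ extracting the ``nonsingular'' rows/columns dictated by the $H,\Gamma,J_1$ part of $\mathcal C_{E^TQ}$, apply Lemma~\ref{lemma_nonsing} to extend the relevant $n\times k$ matrix of columns of $E$ (respectively $Q$) to a nonsingular $n\times n$ matrix $U^{-1}$ (respectively $V$) whose extra columns are annihilated by $P$. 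After this change of variables $E$ and $Q$ take block form $\left[\begin{smallmatrix} I_k & 0\\ 0 & \mathcal E\end{smallmatrix}\right]$-type and $\left[\begin{smallmatrix} N_{11} & 0\\ 0 & \mathcal Q\end{smallmatrix}\right]$-type respectively, where the top-left $k\times k$ pair is nonsingular and its product $N_{11}$ realizes exactly the $H$-, $\Gamma$-part; Theorem~\ref{CCF_*_equi} (its nonsingular-$E$ case, applied internally to the $k\times k$ corner) then puts that corner into the stated form $\big(I_k, \bigoplus H_{2m_i}(\mu_i)\oplus\bigoplus\Gamma_{n_j}\big)$. The leftover $(n-k)\times(n-k)$ pair $(\mathcal E,\mathcal Q)$ satisfies $\mathcal E^T\mathcal Q = \bigoplus_s J_{r_s}(0)$ by construction, since we removed precisely the invertible part of $\mathcal C_{E^TQ}$.

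Finally I would check that the two block reductions can be performed simultaneously, i.e.\ that the transformation used on the $k\times k$ corner can be extended by the identity on the complement without disturbing $\mathcal E^T\mathcal Q$; this is immediate because $^T$-equivalence acts blockwise on a block-diagonal pair, and the product identity $(U_1\oplus U_2)(E_1\oplus E_2)(V_1\oplus V_2) = U_1E_1V_1 \oplus U_2E_2V_2$ together with the analogous identity for $U^{-T}QV$ shows the two pieces do not interact. Assembling, we obtain $U,V\in GL_n(\mathbb C)$ with $(UEV,U^{-T}QV)$ equal to the asserted direct sum, with the remaining singular block $(\mathcal E,\mathcal Q)$ carrying all the nilpotence.

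\textbf{Main obstacle.} The delicate step is producing $U$ and $V$ that \emph{simultaneously} block-diagonalize $E$ and $Q$ while keeping $E^TQ$ in canonical form: a priori the column spaces of $E$ and of $Q$ need not split along the same subspace, and one must use the fact that $\rank(E^TQ)=k$ to force the selected $k$-dimensional complements to be transverse in the right way. This is exactly where Lemma~\ref{lemma_nonsing} (and a careful choice of the extractor $P$ read off from the canonical form of $E^TQ$) does the work; verifying that the extended bases make $E^TQ$ block-diagonal with the prescribed corner — rather than merely block-triangular — is the computational heart of the argument.
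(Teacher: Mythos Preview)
Your overall strategy matches the paper's: bring $E^TQ$ to congruence canonical form $\mathcal R\oplus\mathcal N$ via a right transformation $V$, then use Lemma~\ref{lemma_nonsing} to manufacture the left transformation that block-diagonalizes the pair. Two points need correcting.

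First, $J_1(0)=[0]$ is singular, not invertible; it belongs to the nilpotent part $\mathcal N$, not to the regular $k\times k$ block. The invertible piece consists of the $H$- and $\Gamma$-blocks only. (In fact the statement itself is slightly imprecise: $k$ is really the \emph{size} of the regular part of $\mathcal C_{E^TQ}$, which equals $\rank(E^TQ)$ only when every $J_{r_s}(0)$ has $r_s=1$.)

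Second, and more substantively: after the congruence step you do not build a \emph{new} right transformation from columns of $Q$. Once $EV=[\,C_1\mid C_2\,]$ and $QV=[\,D_1\mid D_2\,]$ with $C_1^TD_1=\mathcal R$ invertible and $C_1^TD_2=C_2^TD_1=0$, the paper applies Lemma~\ref{lemma_nonsing} twice: extend $C_1$ to an invertible $F=[\,C_1\mid C_3\,]$ with $D_1^TC_3=0$, and extend $D_1$ to an invertible \emph{auxiliary} matrix $[\,D_1\mid D_3\,]$ with $C_1^TD_3=0$. The single left transformation $U=F^{-1}$ then yields directly
\[
F^{-1}EV = I_k\oplus\mathcal E,\qquad F^TQV = \mathcal R\oplus\mathcal Q,
\]
with $\mathcal R$ already in canonical form, so no further appeal to Theorem~\ref{CCF_*_equi} on the corner is needed. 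The extension of $D_1$ is used only to compute $F^{-1}EV$ via the identity $V^TE^TF^{-T}=(V^TE^TU)(F^TU)^{-1}$ and verify block-diagonality; it is not a second transformation of the pair. Your ``main obstacle'' paragraph pinpoints exactly this computation; its resolution is the pair of annihilation conditions $D_1^TC_3=0$ and $C_1^TD_3=0$, which force the off-diagonal blocks of both $F^{-1}EV$ and $F^TQV$ to vanish simultaneously.
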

\begin{proof}
    For the sake of simplicity of terms, assume the canonical congruence form of $E^TQ$ is written as $\mathcal{C}_{E^TQ} = \mathcal{R}_{E^TQ} \oplus \mathcal{N}_{E^TQ}$, where $\mathcal{R}_{E^TQ}$ is the regular part and $\mathcal{N}_{E^TQ}$ corresponds to the nilpotent part. Let $V \in GL_{n}(\mathbb C)$ be such that $V^TE^TQV = \mathcal{R}_{E^{T}Q} \oplus \mathcal{N}_{E^{T}Q}$. Partition $V^TE^T$ and $QV$, respectively, as
    \begin{equation}\label{V^TE^T}
        V^TE^T = 
    \begin{bmatrix}
        C_1^T\\
        C_2^T
    \end{bmatrix} \text{ with } C_1 \in M_{n,k}, C_2 \in M_{n,n-k}
    \end{equation} 
    \begin{equation}\label{QV}
    \text{and } QV = 
    \begin{bmatrix}
        D_1 & D_2
    \end{bmatrix} \text{ with } D_1 \in M_{n,k}, D_2 \in M_{n,n-k}.
    \end{equation}
    From \eqref{V^TE^T} and \eqref{QV} we have
    $$\mathcal{R}_{E^{T}Q} \oplus \mathcal{N}_{E^{T}Q} = V^TE^TQV = \begin{bmatrix}
        C_1^T\\
        C_2^T
    \end{bmatrix}\begin{bmatrix}
        D_1 & D_2
    \end{bmatrix} = \begin{bmatrix}
        C_1^TD_1 & C_1^TD_2 \\
        C_2^TD_1 & C_2^TD_2
    \end{bmatrix}.$$
    Equating both sides in this equation, we get $C_1^TD_2 = 0$, $C_2^TD_1 = 0$, $C_2^TD_2 = \mathcal{N}_{E^{T}Q}$ and $C_1^TD_1 = \mathcal{R}_{E^{T}Q}$ is nonsingular and so is $D_1^TC_1$. By Lemma \ref{lemma_nonsing} there exist $C_3$ and $D_3$ such that 
    $F = \begin{bmatrix}
        C_1 & C_3
    \end{bmatrix} \in GL_{n}(\mathbb C)$, 
    $U = \begin{bmatrix}
        D_1 & D_3
    \end{bmatrix} \in GL_{n}(\mathbb C)$ and $C_1^TD_3 = 0$, $D_1^TC_3 = 0$, i.e., $C_3^TD_1 = 0.$ Note that
    \begin{equation}\label{F^TU}
    F^TU = 
    \begin{bmatrix}
        C_1^TD_1 & C_1^TD_3 \\
        C_3^TD_1 & C_3^TD_3
    \end{bmatrix} = 
    \begin{bmatrix}
        \mathcal{R}_{E^{T}Q} & 0 \\
        0 & C_3^TD_3
    \end{bmatrix}
    \end{equation}
    Since $F^TU$ is nonsingular, so is $C_3^TD_3$. Using \eqref{V^TE^T} and the expression for $U$, we have,  \begin{equation}\label{V^TE^TU}
    V^TE^TU = 
    \begin{bmatrix}
        C_1^TD_1 & C_1^TD_3 \\
        C_2^TD_1 & C_2^TD_3
    \end{bmatrix} =
    \begin{bmatrix}
        \mathcal{R}_{E^{T}Q} & 0 \\
        0 & C_2^TD_3
    \end{bmatrix}.
    \end{equation}
    Then  $V^TE^TF^{-T} = V^TE^TUU^{-1}F^{-T} = (V^TE^TU)(F^TU)^{-1}$, and using \eqref{F^TU} and \eqref{V^TE^TU}, we have
    \begin{equation}\label{V^TE^TF^-T}
    V^TE^TF^{-T} = 
    \begin{bmatrix}
        \mathcal{R}_{E^{T}Q} & 0 \\
        0 & C_2^TD_3
    \end{bmatrix}
    \begin{bmatrix}
        \mathcal{R}_{E^{T}Q}^{-1} & 0 \\
        0 & (C_3^TD_3)^{-1}
    \end{bmatrix} = 
    \begin{bmatrix}
        I_k & 0\\
        0 & (C_2^TD_3)(C_3^TD_3)^{-1}
    \end{bmatrix}.\end{equation}
    Transposing both sides of \eqref{V^TE^TF^-T} we get, \begin{equation}\label{F^{-1}EY}
        F^{-1}EV = 
    \begin{bmatrix}
        I_k & 0\\
        0 & ((C_2^TD_3)(C_3^TD_3)^{-1})^T
    \end{bmatrix} = :
    \begin{bmatrix}
        I_k & 0\\
        0 & \mathcal{E}
    \end{bmatrix}.
    \end{equation}
    Again, using \eqref{QV} and the expression for $F$, we obtain
    \begin{equation}\label{F^TQV}
        F^TQV = 
    \begin{bmatrix}
        C_1^TD_1 & C_1^TD_2 \\
        C_3^TD_1 & C_3^TD_2
    \end{bmatrix} =
    \begin{bmatrix}
        \mathcal{R}_{E^{T}Q} & 0 \\
        0 & C_3^TD_2
    \end{bmatrix} = : \begin{bmatrix}
        \mathcal{R}_{E^{T}Q} & 0 \\
        0 & \mathcal{Q}
    \end{bmatrix}.
    \end{equation} 
    Then, the assertion follows from the fact that  $\mathcal{N}_{E^{T}Q} = \mathcal{E}^T\mathcal{Q}$ is nilpotent, and
    \begin{eqnarray*}\mathcal{R}_{E^{T}Q} \oplus \mathcal{N}_{E^{T}Q} &=& V^TE^TQV = (V^TE^TF^{-T})(F^TQV) = \begin{bmatrix}
        I_k & 0\\
        0 & \mathcal{E}^T
    \end{bmatrix}
    \begin{bmatrix}
        \mathcal{R}_{E^{T}Q} & 0 \\
        0 & \mathcal{Q}
    \end{bmatrix}\\
    &= &
    \begin{bmatrix}
        \mathcal{R}_{E^{T}Q} & 0 \\
        0 & \mathcal{E}^T\mathcal{Q}
    \end{bmatrix}.
    \end{eqnarray*}
\end{proof}
Using Theorem \ref{CCF_*_equi_2}, similar arguments as in Lemma \ref{lemma_true_for_all} and considering the conjugate transpose  of matrices instead of the transpose in Lemma \ref{lemma_true_for_all}, we get the following analogous result.
\begin{lemma}\label{lemma_true_all_*}
    Let $(E,Q) \in (M_{n}(\mathbb C),M_{n}(\mathbb C))$. Then there exist $U,V \in GL_{n}(\mathbb C)$ such that 
    $$(UEV,U^{-*}QV) = (I_{n-v} \oplus \mathcal{E}, (\bigoplus_{i = 1}^{p} H_{2m_i}(\mu_i)) \oplus (\bigoplus_{j = 1}^{q}e^{i\theta_{j}} \Delta_{n_j}) \oplus \mathcal{Q}),$$
    where $v = \sum_{k=1}^{s}r_k$, $|\mu_i| > 1,\text{ } 0\le \theta_j < 2\pi$ and $\mathcal{E}^*\mathcal{Q} = \bigoplus_{s = 1}^{p}J_{r_s}(0)$.
\end{lemma}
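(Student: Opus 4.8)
The plan is to imitate the proof of Lemma~\ref{lemma_true_for_all} essentially line by line, replacing every transpose by a conjugate transpose, $^{T}$-congruence by $^{*}$-congruence, and Theorem~\ref{congruence} by Theorem~\ref{*congruence}; Theorem~\ref{CCF_*_equi_2} then takes care of the extreme cases. Concretely, I would first note that if $\rank(E^{*}Q)=n$ then both $E$ and $Q$ are nonsingular and the statement is Theorem~\ref{CCF_*_equi_2} for nonsingular $Q$ with empty nilpotent part ($v=0$), while if $\rank(E^{*}Q)=0$ it holds trivially with empty regular part and $\mathcal{E}^{*}\mathcal{Q}=0$. So it suffices to treat $0<\rank(E^{*}Q)=k<n$, and I put $v=n-k$.

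Write the $^{*}$-congruence canonical form of $E^{*}Q$ as $\mathcal{C}_{E^{*}Q}=\mathcal{R}_{E^{*}Q}\oplus\mathcal{N}_{E^{*}Q}$, where $\mathcal{R}_{E^{*}Q}=(\bigoplus_i H_{2m_i}(\mu_i))\oplus(\bigoplus_j e^{i\theta_j}\Delta_{n_j})$ collects the Type~II´ blocks ($|\mu_i|>1$) and the Type~I´ blocks --- all invertible --- and $\mathcal{N}_{E^{*}Q}=\bigoplus_s J_{r_s}(0)$ collects the Type~0 blocks, which are exactly the nilpotent part; here $v=\sum_s r_s$. Choose $V\in GL_{n}(\mathbb C)$ with $V^{*}E^{*}QV=\mathcal{R}_{E^{*}Q}\oplus\mathcal{N}_{E^{*}Q}$, partition $V^{*}E^{*}$ row-wise into its first $k$ rows $C_1^{*}$ and the rest $C_2^{*}$, and $QV$ column-wise into its first $k$ columns $D_1$ and the rest $D_2$, so $C_1,D_1\in M_{n,k}$. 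Expanding the product and equating blocks gives $C_1^{*}D_2=0$, $C_2^{*}D_1=0$, $C_2^{*}D_2=\mathcal{N}_{E^{*}Q}$, and $C_1^{*}D_1=\mathcal{R}_{E^{*}Q}$ nonsingular, hence $D_1^{*}C_1$ nonsingular as well.

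Now apply Lemma~\ref{lemma_nonsing} --- a purely complex-linear-algebra statement, so the conjugate transposes carry no special weight --- to extend $C_1$ to $F=[\,C_1\ C_3\,]\in GL_{n}(\mathbb C)$ with $D_1^{*}C_3=0$ (equivalently $C_3^{*}D_1=0$), and to extend $D_1$ to $U_0=[\,D_1\ D_3\,]\in GL_{n}(\mathbb C)$ with $C_1^{*}D_3=0$. Then $F^{*}U_0=\mathcal{R}_{E^{*}Q}\oplus C_3^{*}D_3$ and $V^{*}E^{*}U_0=\mathcal{R}_{E^{*}Q}\oplus C_2^{*}D_3$ are block diagonal, and since $F^{*}U_0$ is nonsingular so is $C_3^{*}D_3$. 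Therefore $V^{*}E^{*}F^{-*}=(V^{*}E^{*}U_0)(F^{*}U_0)^{-1}=I_k\oplus(C_2^{*}D_3)(C_3^{*}D_3)^{-1}$, and, taking conjugate transposes, $F^{-1}EV=I_k\oplus\mathcal{E}$ with $\mathcal{E}:=\big((C_2^{*}D_3)(C_3^{*}D_3)^{-1}\big)^{*}$; similarly $F^{*}QV=\mathcal{R}_{E^{*}Q}\oplus C_3^{*}D_2=:\mathcal{R}_{E^{*}Q}\oplus\mathcal{Q}$. Setting $U:=F^{-1}$ gives $UEV=I_{n-v}\oplus\mathcal{E}$ and $U^{-*}QV=F^{*}QV=(\bigoplus_i H_{2m_i}(\mu_i))\oplus(\bigoplus_j e^{i\theta_j}\Delta_{n_j})\oplus\mathcal{Q}$, and comparing the trailing diagonal block in $V^{*}E^{*}QV=(V^{*}E^{*}F^{-*})(F^{*}QV)$ yields $\mathcal{E}^{*}\mathcal{Q}=\mathcal{N}_{E^{*}Q}=\bigoplus_s J_{r_s}(0)$, which is the claim.

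I do not expect any genuine obstacle: the argument is a mechanical $^{*}$-transcription of Lemma~\ref{lemma_true_for_all}. The only points deserving a moment's attention are that (i) Lemma~\ref{lemma_nonsing} is applied to the ordinary complex matrices $C_1,D_1,C_1^{*},D_1^{*}$, so no conjugation subtlety enters; (ii) in Theorem~\ref{*congruence} the Type~0 summands are precisely the nilpotent ones while the Type~I´ and Type~II´ summands ($e^{i\theta}\Delta_n$ and $H_{2m}(\mu)$ with $|\mu|>1$) are invertible, which justifies the splitting $\mathcal{C}_{E^{*}Q}=\mathcal{R}_{E^{*}Q}\oplus\mathcal{N}_{E^{*}Q}$; and (iii) although the $^{*}$-congruence orbit is only a real manifold, Theorem~\ref{*congruence} still provides the transforming matrix in $GL_{n}(\mathbb C)$, so the bookkeeping with $U,V\in GL_{n}(\mathbb C)$ goes through unchanged.
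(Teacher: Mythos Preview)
Your proposal is correct and follows exactly the approach the paper intends: the paper does not give a separate proof of Lemma~\ref{lemma_true_all_*} but simply states that it follows by repeating the argument of Lemma~\ref{lemma_true_for_all} with conjugate transposes, $^{*}$-congruence, and Theorem~\ref{*congruence} in place of their $^{T}$-counterparts, which is precisely what you do. Your explicit treatment of the extreme cases $\rank(E^{*}Q)\in\{0,n\}$ is a welcome addition, since the stated lemma (unlike Lemma~\ref{lemma_true_for_all}) carries no rank restriction.
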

The following remark suggests an approach that leads us to the main result of this section.
\begin{remark}\label{solution_for_nilpotent}
Consider  a pair $(G^{-1},H)\in (GL_{n}(\mathbb C),GL_{n}(\mathbb C))$, and, using Lemma \ref{lemma_true_all_*}, let $X_1^{-1} = I_{n-v} \oplus G^{-1}$, $Y_1 = I_{n-v} \oplus H \in GL_{n}(\mathbb C)$  be such that $$(X_1^{-1}U)E(VY_1) =   \begin{bmatrix}
    I_{n-v} & 0 \\
    0 & G^{-1}\mathcal{E}H
\end{bmatrix}, (X_1^{*}U^{-*})E(VY_1) = \begin{bmatrix}
    \mathcal{R}_{E^{*}Q} & 0 \\
    0 & G^{*}\mathcal{Q}H
\end{bmatrix}.
$$
Since $(G^{-1}\mathcal{E}H)^*G^{*}\mathcal{Q}H = H^*\mathcal{E}^{*}\mathcal{Q}H$,  finding $G^{-1}$ and $H$ that transforms $(\mathcal{E},\mathcal{Q})$ into canonical form, presents an approach for computing the canonical form for $(E,Q)$.
\end{remark}

Currently, we do not know how to construct  $G^{-1},H \in GL_{n}(\mathbb C)$ as in Remark \ref{solution_for_nilpotent} for general pairs $(E,Q)$. 
In the following, we therefore consider the special cases that $E^*Q$ is Hermitian or skew-Hermitian. By Lemma \ref{lemma_true_all_*}, $(E,Q)$ can be transformed the form $(I_k, \mathcal{R}_{E^*Q}) \oplus (\mathcal{E},\mathcal{Q})$ via $^*$-equivalence, where $\mathcal{R}_{E^*Q}$ is the regular part of the congruence form of $E^*Q$ under $^*$-congruence. The conditions $E^*Q$ being Hermitian and skew-Hermitian imply that  $\mathcal{E}^*\mathcal{Q} = 0$. In view of this observation, we now introduce and address the problem 
of finding a canonical form for a pair $(\mathcal{E},\mathcal{Q})$ under $^*$-equivalence via $U \in GL_{n}(\mathbb C)$ and unitary $V$, and $\mathcal{E}^*\mathcal{Q} = 0$. For this we introduce $k\times
(k+1)$ matrices
$$
F_k :=
\begin{bmatrix}
0&1&&\\
&\ddots&\ddots&\\
&&0&1\\
\end{bmatrix}, \qquad
G_k :=
\begin{bmatrix}
1&0&&\\
&\ddots&\ddots&\\
&&1&0\\
\end{bmatrix},
$$ where the non-specified entries are zeros. We also define the direct sum of two pairs of matrices $(A,B)$ and $(C,D)$ as $(A,B) \oplus (C,D) := (A \oplus C, B \oplus D)$.
Then we have the following theorem.
\begin{theorem}\label{nilpotent}
    Let $(\mathcal{E},\mathcal{Q}) \in (M_{n}(\mathbb C),M_{n}(\mathbb C))$ satisfy $\mathcal{E}^*\mathcal{Q} = 0$. Then there exist nonsingular $U$ and unitary $V$ such that
     $$(U\mathcal{E}V,U^{-*}\mathcal{Q}V) = \bigoplus_{i=1}^f (A_i,B_i);$$ 
     where $(A_i,B_i) \in \{(1,0),(0,1),(0,0),(G_1^T,F_1^T) \oplus (G_0,F_0)\}  $ and $f\in \mathbb{Z}_{+}$. This canonical form is determined uniquely up to a permutation of pairs of summands.
\end{theorem}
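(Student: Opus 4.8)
The plan is to analyze the pair $(\mathcal{E},\mathcal{Q})$ with $\mathcal{E}^*\mathcal{Q}=0$ by exploiting the extra freedom that $V$ may be taken unitary while $U$ ranges over all of $GL_n(\mathbb{C})$. First I would use the condition $\mathcal{E}^*\mathcal{Q}=0$ to observe that the column space of $\mathcal{Q}$ is contained in the orthogonal complement (with respect to the standard Hermitian form) of the column space of $\mathcal{E}$; equivalently, $\operatorname{range}(\mathcal{Q}V)\perp\operatorname{range}(\mathcal{E}V)$ for any unitary $V$, and applying $U$ on the left and $U^{-*}$ respectively does not destroy the relation $(\,U\mathcal{E}V)^*(U^{-*}\mathcal{Q}V)=V^*\mathcal{E}^*\mathcal{Q}V=0$. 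So the defining relation is preserved under the allowed transformations, which is what lets us hope for a normal form built only from the tiny blocks listed.

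The main step is a reduction to a quasi-diagonal shape. I would first act by a unitary $V$ on the right to put $\mathcal{E}$ into a convenient form — for instance, using the singular value decomposition / a unitary column compression so that the nonzero columns of $\mathcal{E}$ occupy a leading coordinate block; then act by $U\in GL_n(\mathbb{C})$ on the left to clean up $\mathcal{E}$ into a $0/1$ pattern (a partial identity). Because $U$ acts on $\mathcal{Q}$ only as $U^{-*}$, the constraint $\mathcal{E}^*\mathcal{Q}=0$ forces $\mathcal{Q}$ to be supported on the complementary columns; now a further unitary rotation of those complementary columns (which does not disturb the already-normalized part of $\mathcal{E}$, since $V$ is block-diagonal with respect to the splitting) together with a left $GL$ action normalizes $\mathcal{Q}$. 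The columns where both $\mathcal{E}$ and $\mathcal{Q}$ vanish give $(0,0)$ blocks; columns where only $\mathcal{E}$ is nonzero give $(1,0)$; columns where only $\mathcal{Q}$ is nonzero give $(0,1)$. The role of the mixed block $(G_1^T,F_1^T)\oplus(G_0,F_0)$ — which is a $2\times 2$ example pairing, namely $\big(\left[\begin{smallmatrix}1\\0\end{smallmatrix}\right],\left[\begin{smallmatrix}0\\1\end{smallmatrix}\right]\big)$ suitably arranged — is to handle the "overlap" coordinates that cannot be separated because the unitary constraint on $V$ prevents us from decoupling a coordinate that $\mathcal{E}$ uses from one that $\mathcal{Q}$ uses when they are not already orthogonal in the right basis; one tracks a Kronecker-type minimal-index invariant here.

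I expect the uniqueness part to be the subtler half, and within the existence argument the delicate point is proving that no block larger than the $(G_1^T,F_1^T)\oplus(G_0,F_0)$ pattern is needed — i.e., that the interplay between the non-unitary left action and the unitary right action cannot generate longer indecomposable chains. The natural tool is to set up, for a candidate pair, the system of linear equations describing when $(U\mathcal{E}V,U^{-*}\mathcal{Q}V)=(\mathcal{E}',\mathcal{Q}')$ and read off invariants: the ranks of $\mathcal{E}$, of $\mathcal{Q}$, of $[\mathcal{E}\ \mathcal{Q}]$, and of the Hermitian pencil data attached to the overlap, count the multiplicities of each block type. For uniqueness I would check that these invariants are preserved by the transformation group and that they separate the four block types; for existence I would argue by induction on $n$, peeling off one block at a time using the compression steps above. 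The hard part will be carefully controlling the unitary-versus-general asymmetry in the left/right actions so that the reduction genuinely terminates in the four listed atoms and the dimension counts match.
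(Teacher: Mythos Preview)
Your global compression strategy is different in spirit from the paper's proof, which is inductive: the paper looks at the ``longest nonzero alternating product'' of $\mathcal{E}^*$ and $\mathcal{Q}$, distinguishes the cases $\mathcal{Q}\mathcal{E}^*\neq 0$ (even parity) versus $\mathcal{Q}\mathcal{E}^*=0$ with $\mathcal{E}\neq 0$ or $\mathcal{Q}\neq 0$ (odd parity), and in each case constructs explicit vectors (e.g.\ $x/\|\mathcal{E}^*x\|$ and $\mathcal{Q}\mathcal{E}^*x/\|\mathcal{E}^*x\|$ in the even case) which, extended via Lemma~\ref{lemma_nonsing} and an orthonormal completion, peel off exactly one canonical summand and leave a smaller pair $(A_2,B_2)$ with $A_2^*B_2=0$. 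Your SVD/compression route could in principle be made to work, but as written it contains a genuine error that breaks the argument.

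The specific gap is the sentence ``the constraint $\mathcal{E}^*\mathcal{Q}=0$ forces $\mathcal{Q}$ to be supported on the complementary columns.'' After you normalize $U\mathcal{E}V=\left[\begin{smallmatrix}I_r&0\\0&0\end{smallmatrix}\right]$, the relation $(U\mathcal{E}V)^*(U^{-*}\mathcal{Q}V)=0$ kills the top $r$ \emph{rows} of $U^{-*}\mathcal{Q}V$, not the left $r$ columns: you get
\[
U^{-*}\mathcal{Q}V=\begin{bmatrix}0&0\\ Q_{21}&Q_{22}\end{bmatrix},
\]
and nothing forces $Q_{21}=0$. Indeed the mixed block $(G_1^T,F_1^T)\oplus(G_0,F_0)$ is precisely the $2\times 2$ pair $\bigl(\left[\begin{smallmatrix}1&0\\0&0\end{smallmatrix}\right],\left[\begin{smallmatrix}0&0\\1&0\end{smallmatrix}\right]\bigr)$, where $\mathcal{E}$ and $\mathcal{Q}$ are nonzero in the \emph{same} column, directly contradicting your claim. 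The invariant that detects these blocks is $\operatorname{rank}(\mathcal{Q}\mathcal{E}^*)=\operatorname{rank}(Q_{21})$ (note $\mathcal{Q}\mathcal{E}^*$ is similar to $U^{-*}\mathcal{Q}\mathcal{E}^*U^*$ and is nilpotent of index $\le 2$); this is exactly what the paper's even-parity case is picking up. So to repair your approach you must keep $Q_{21}$ in play, normalize it using the residual $(U',V')$-freedom that preserves the form of $\mathcal{E}$, and show that its rank gives the number of mixed summands. Your list of rank invariants for uniqueness is on the right track but incomplete without $\operatorname{rank}(\mathcal{Q}\mathcal{E}^*)$: with it, the multiplicities $(a,b,c,d)$ of $(1,0),(0,1)$, mixed, and $(0,0)$ are determined by $\operatorname{rank}\mathcal{E}=a+c$, $\operatorname{rank}\mathcal{Q}=b+c$, $\operatorname{rank}(\mathcal{Q}\mathcal{E}^*)=c$, and $n=a+b+2c+d$.
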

\begin{proof}
    Since $\mathcal{E}^*\mathcal{Q} = 0$, also $\mathcal{Q}(\mathcal{E}^*\mathcal{Q})\mathcal{E}^* = 0$, i.e., $\mathcal{Q}\mathcal{E}^*$ is nilpotent of nilpotency index at most two.  Since $ \mathcal{E}^*(\mathcal{Q}\mathcal{E}^*) = 0 \text{ and } \mathcal{Q}(\mathcal{E}^*\mathcal{Q}) = 0$,  the length of the longest finite nonzero alternating product of $\mathcal{E}^*$ and $\mathcal{Q}$ can be at most $2$. Since $\mathcal{E}^*\mathcal{Q} = 0$, we have $\mathcal{Q}\mathcal{E}^* \neq 0$ as the longest nonzero finite alternating product with even parity. The longest nonzero finite alternating product is of odd parity, i.e., either $\mathcal{E}^* \neq 0$ or $\mathcal{Q} \neq 0$. We proceed with  the sub-cases of even and odd parity separately.

    {\bf Even Parity:} Let $x,y \in \mathbb{C}^n$ such that $y^*\mathcal{Q}\mathcal{E}^*x \neq 0$.
    Set 
    \[
    Z_1 = \begin{bmatrix}
        \dfrac{x}{||\mathcal{E}^*{x}||} & \dfrac{\mathcal{Q}\mathcal{E}^*x}{||\mathcal{E}^*{x}||}
    \end{bmatrix}_{n\times 2},
   \ y_1 = \begin{bmatrix}
        \dfrac{\mathcal{E}^*x}{||\mathcal{E}^*{x}||}
    \end{bmatrix}_{n\times 1},\ p= \begin{bmatrix}
        y^*\mathcal{Q}\\
    \end{bmatrix}_{1\times n},\ R = \begin{bmatrix}
        y^*\\
        y^*\mathcal{Q}\mathcal{E}^*
    \end{bmatrix}_{2\times n}.
    \]
    We then  prove by contradiction that $\dfrac{x}{||\mathcal{E}^*{x}||}$ and $\dfrac{\mathcal{Q}\mathcal{E}^*x}{||\mathcal{E}^*{x}||}$ are linearly independent. A similar method can then be used to prove $R$ has full row rank. Let $k$ be a scalar such that $\dfrac{x}{||\mathcal{E}^*\Tilde{x}||} + k\dfrac{\mathcal{Q}\mathcal{E}^*x}{||\mathcal{E}^*\Tilde{x}||}= 0$. Multiplying the equation by $\mathcal{E}^*$ on the left and using that $\mathcal{E}^*\mathcal{Q} = 0$, we get $\mathcal{E}^*x = 0$ which is a contradiction. 
    
    We then extend $y_1$ as $\{y_1, \gamma_1,...,\gamma_{n-1}\}$ to an orthonormal set, so that with $Y_2 = \begin{bmatrix}
        \gamma_1 & \dots & \gamma_{n-1}
    \end{bmatrix} \in M_{n,(n-1)}$ 
    then $Y = \begin{bmatrix}
        y_1 & Y_2
    \end{bmatrix}$ is a unitary matrix. Thus, 
    \begin{equation}\label{py_1}
        py_1 \neq 0 \text{ and } pY_2 = 0.
    \end{equation} 
    We set
    $H = \begin{bmatrix}
        1 & 0
    \end{bmatrix}$ and $K = \begin{bmatrix}
        0 & 1
    \end{bmatrix}$. Observe,  that
    \[
    RZ_1 = \dfrac{1}{||\mathcal{E}^*{x}||}\begin{bmatrix}
        y^*x & y^*\mathcal{Q}\mathcal{E}^*x\\
        y^*\mathcal{Q}\mathcal{E}^*x & 0
    \end{bmatrix}
    \]
    is nonsingular. By simple  calculations  then 
    \begin{equation}\label{E^*Z_1}
        \mathcal{E}^*Z_1 = y_1H,
    \end{equation}
    \begin{equation}\label{pE^*}
        p\mathcal{E}^* = KR,
    \end{equation}
    \begin{equation}\label{Qy_1}
        \mathcal{Q}y_1 = Z_1K^T,
    \end{equation}
    \begin{equation}\label{RQ}
        R\mathcal{Q} = H^Tp.
    \end{equation}
    
    Suppose that $n > 2$. Then, by Lemma \ref{lemma_nonsing}, there exist a matrix $Z_2$ of compatible size such that $Z = \begin{bmatrix}
        Z_1 & Z_2
    \end{bmatrix}$ is nonsingular and 
    \begin{equation}\label{RZ_2_=_O}
        RZ_2 = 0.
    \end{equation} 
    Partition $C := Y^{-1}\mathcal{E}^*Z_2 = \begin{bmatrix}
        C_1 \\
        A_2^*
    \end{bmatrix}$ conformably such that \begin{equation}\label{E^TZ_2}
    \mathcal{E}^*Z_2 = YC = \begin{bmatrix}
        y_1 & Y_2
    \end{bmatrix}\begin{bmatrix}
        C_1 \\ 
        A_2^*
    \end{bmatrix} = y_1C_1 + Y_2A_2^*.
    \end{equation}
    Using \eqref{py_1}, \eqref{pE^*}, \eqref{RZ_2_=_O} and \eqref{E^TZ_2}, we then get
    $$p\mathcal{E}^*Z_2 = py_1C_1 + (pY_2)A_2^* = py_1C_1 \text{ and } p\mathcal{E}^*Z_2 = KRZ_2 = 0.$$
    Since, $py_1 \neq 0$ by \eqref{py_1}, then $C_1 = 0$. Thus, using also \eqref{E^*Z_1}, we obtain
    $$\mathcal{E}^*Z 
    = \begin{bmatrix}
        \mathcal{E}^*Z_1 & \mathcal{E}^*Z_2 
    \end{bmatrix} = \begin{bmatrix}
        y_1H & Y_2A_2^* 
    \end{bmatrix} = 
    \begin{bmatrix}
        y_1 & Y_2 
    \end{bmatrix}\begin{bmatrix}
        H & \\
         & A_2^* 
    \end{bmatrix},
    $$
    and 
    \begin{equation}\label{Y^{-1}E^*Z}
        Y^{-1}\mathcal{E}^*Z = 
        \begin{bmatrix}
        H & \\
         & A_2^* 
    \end{bmatrix}.
    \end{equation}
    Again, partition $D := Z^{-1}\mathcal{Q}Y_2 = \begin{bmatrix}
        D_1 \\  
        B_2 
    \end{bmatrix}$ conformably such that 
    \begin{equation}\label{QY_2}
        \mathcal{Q}Y_2 = ZD = \begin{bmatrix}
        Z_1 & Z_2 
    \end{bmatrix}  \begin{bmatrix}
        D_1 \\  
        B_2 
    \end{bmatrix} = Z_1D_1 + Z_2B_2.   
    \end{equation}
    Then, using \eqref{py_1}, \eqref{RQ}, \eqref{RZ_2_=_O} and \eqref{QY_2}, it follows that 
    \begin{equation}\label{RQY_2}
        R\mathcal{Q}Y_2 = RZ_1D_1 + (RZ_2)B_2 = (RZ_1)D_1 \text{ and } R\mathcal{Q}Y_2 = H^T(pY_2) = 0.
    \end{equation}
    Since $RZ_1$ is nonsingular, then $D_1=0$, and using \eqref{Qy_1} and \eqref{QY_2}, we arrive at the equations
    $$
    \begin{bmatrix}
        \mathcal{Q}y_1 & \mathcal{Q}Y_2 
    \end{bmatrix} = \begin{bmatrix}
        Z_1K^T & Z_2B_2 
    \end{bmatrix} = \begin{bmatrix}
        Z_1 & Z_2 
    \end{bmatrix}\begin{bmatrix}
        K^T & \\
         & B_2 
    \end{bmatrix},$$
    \begin{equation}\label{Z^{-1}QY}
        Z^{-1}\mathcal{Q}Y = \begin{bmatrix}
        K^T & \\
         & B_2 
    \end{bmatrix}.
    \end{equation}
    Setting $Z = U^*$, and rewriting \eqref{Y^{-1}E^*Z} and \eqref{Z^{-1}QY}, we then have 
    $$Y^{-1}\mathcal{E}^*U^* = \begin{bmatrix}
        H & \\
        & A_2^*
    \end{bmatrix} \text{, }
    U^{-*}\mathcal{Q}Y = \begin{bmatrix}
        K^T & \\
        & B_2
    \end{bmatrix},
    $$
    or
    $$ U\mathcal{E}Y = \begin{bmatrix}
        H^T & \\
        & A_2
    \end{bmatrix} \text{, }
    U^{-*}\mathcal{Q}Y = \begin{bmatrix}
        K^T & \\
        & B_2
    \end{bmatrix},$$ 
    where $H = \begin{bmatrix}
        1 & 0
    \end{bmatrix}$ and $K = \begin{bmatrix}
        0 & 1
    \end{bmatrix}$. Note that, $Y^*\mathcal{E}^*\mathcal{Q}Y = \begin{bmatrix}
        0 & \\
         & A_2^*B_2
    \end{bmatrix} = 0$, i.e., $A_2^*B_2 = 0$.

    If $n = 2$ then $Z_2$ is absent, i.e. $Z = Z_1$ and using \eqref{E^*Z_1}, we get
    $$
    \mathcal{E}^*Z = \mathcal{E}^*Z_1 = y_1H = \begin{bmatrix}
        y_1 & Y_2
    \end{bmatrix}\begin{bmatrix}
        H \\ 
        0
    \end{bmatrix}.
    $$
    Then, for $n=2$, \eqref{Y^{-1}E^*Z} is analogous to
    \begin{equation}\label{(REC)Y^{-1}E^*Z}
        Y^{-1}\mathcal{E}^*Z = \begin{bmatrix}
        H \\ 
        0
    \end{bmatrix}, \text{or, } Z^*\mathcal{E}Y = \begin{bmatrix}
        H^T & 0\\
    \end{bmatrix}.
    \end{equation}
 By \eqref{RQY_2} then 
 $R\mathcal{Q}Y_2 = H^T(pY_2) = 0$, but since $R$ is nonsingular, it follows that $ \mathcal{Q}Y_2 = 0$. Using \eqref{Qy_1} it follows that 
 $$\mathcal{Q}Y = \mathcal{Q}\begin{bmatrix}
        y_1 & Y_2 
    \end{bmatrix} = \begin{bmatrix}
        Z_1K^T & 0 
    \end{bmatrix} = Z_1\begin{bmatrix}
        K^T & 0 
    \end{bmatrix} = Z\begin{bmatrix}
        K^T & 0 
    \end{bmatrix}.$$ Therefore, for $n=2$, \eqref{Z^{-1}QY} is analogous to
    \begin{equation}\label{(REC)Z^{-1}QY}
        Z^{-1}\mathcal{Q}Y = \begin{bmatrix}
        K^T & 0 
    \end{bmatrix}.
    \end{equation} 
   {\bf Odd parity:} As explained previously, the longest nonzero finite alternating product of $\mathcal{E}^*$ and $\mathcal{Q}$ with odd parity is either $\mathcal{E}^* \neq 0$ or $\mathcal{Q} \neq 0$. Note, that in this case $\mathcal{Q}\mathcal{E}^* = \mathcal{E}^*\mathcal{Q} = 0$. First, we work with $n >1$.\\
    {\bf The sub-case $\mathcal{E}^* \neq 0$:} Let $y, {x} \in \mathbb{C}^n$ such that $y^*\mathcal{E}^*{x} \neq 0$. Let $z_1 = \dfrac{{x}}{||\mathcal{E}^*{x}||}$ and $p = y^*$.
    Construct 
   \begin{equation}\label{equation1}
        r = p\mathcal{E}^*
    \text{ and }
        y_1 = \mathcal{E}^*z_1.
    \end{equation}
    Note that $py_1 = rz_1 = y^*\mathcal{E}^*z_1 \neq 0$. Then, the following equations hold:
    \begin{equation}\label{equation2}
        \mathcal{Q}y_1 = \mathcal{Q}\mathcal{E}^*z_1 =  z_1 \times 0
    \text{ and }
        r\mathcal{Q} = p\mathcal{E}^*\mathcal{Q} = 0 \times p.
    \end{equation}
    Construct $Y_2$ and $Z_2$ as that in the previous case of even parity, with $Y = \begin{bmatrix}
        y_1 & Y_2
    \end{bmatrix}$  unitary and $Z = \begin{bmatrix}
        z_1 & Z_2
    \end{bmatrix}$  nonsingular such that
   \begin{equation}\label{equation3}
        pY_2 = 0 \text{ and } rZ_2 = 0
    \end{equation}
    Using \eqref{equation1}-\eqref{equation3}, similar arguments as in the previous case, and setting $Z = U^*$, we get the identity 
    $$U\mathcal{E}Y = \begin{bmatrix}
        1 & \\
        & A_2
    \end{bmatrix} \text{, }
    U^{-*}\mathcal{Q}Y = \begin{bmatrix}
        0 & \\
        & B_2
    \end{bmatrix} \text{ , } A_2^*B_2 \text{ is zero} .$$
    {\bf The sub-case $\mathcal{Q} \neq 0$:} We \mbox{\rm inter}change the roles of $\mathcal{E}^*$ and $\mathcal{Q}$ of our analysis in the sub-case of $\mathcal{E}^* \neq 0$. We do so by assuming that there exist complex vectors $y$, ${x}$ such that $y^*\mathcal{Q}{x} \neq 0$, $z_1 = \dfrac{x}{||{x}||}$ and $y_1 = \mathcal{Q}z_1$. Set $r = y^*$ and $p = r\mathcal{Q}$ and note that $pz_1 = ry_1 = y^*\mathcal{Q}z \neq 0$. Then,
    \begin{equation}\label{equation4}
        \mathcal{E}^*y_1 = 
        \begin{bmatrix}
        z_1 \\ 0\end{bmatrix} \text{ and } p\mathcal{E}^* = 0
    \end{equation}
    Using a similar construction of $Z = \begin{bmatrix}
        z_1 & Z_2
    \end{bmatrix}$ and $Y = \begin{bmatrix}
        y_1 & Y_2
    \end{bmatrix}$ as in the previous case (except that we construct a unitary $Z$ and a nonsingular $Y$ such that 
    \begin{equation}\label{equation5}
        rY_2 = 0 \text{ and } pZ_2 = 0.
    \end{equation}
    Using equations \eqref{equation4} and \eqref{equation5} along with similar arguments as in the previous cases, we get,
    $$Y^{-1}\mathcal{Q}Z = \begin{bmatrix}
        1 & \\
        & B_2
    \end{bmatrix} \text{, }
    Z^{-1}\mathcal{E}^*Y = \begin{bmatrix}
        0 & \\
        & A_2^*
    \end{bmatrix},$$
    $$\text{or, } Y^{-1}\mathcal{Q}Z = \begin{bmatrix}
        1 & \\
        & B_2
    \end{bmatrix} \text{, }
    Y^{*}\mathcal{E}Z = \begin{bmatrix}
        0 & \\
        & A_2
    \end{bmatrix}.$$
    Renaming $Y^{-1} = U$, we finally have, 
    $$U\mathcal{Q}Z = \begin{bmatrix}
        1 & \\
        & B_2 
    \end{bmatrix} \text{, }
    U^{-*}\mathcal{E}Z = \begin{bmatrix}
        0 & \\
        & A_2
    \end{bmatrix} \text{, } A_2^*B_2 \text{ is zero} .$$ 
    
In all of the discussed cases, the process continues analogously with $B_2$ and $A_2$. It may happen that at some stage of the reduction process both $B_2$ and $A_2$ are of zero rank, then the pair is $(0,0)$.
    
For $n = 1$, the case that both $\mathcal{E}^* \neq 0$ and $\mathcal{Q} \neq 0$ is not possible.  Thus, the resulting canonical  summand is either $(1,0)$ or $(0,1)$. 

 Since $E$, $Q$ are square  and $H^T$, $K^T$ are $1 \times 2$ rectangular matrices,  for every block $(H^T,K^T)$, there exist a zero column in both $E$ and $Q$. So, if required, we permute the zero column to obtain the canonical blocks $(1,0),(0,1),(0,0)$ and $(G_1^T,F_1^T) \oplus (G_0,F_0)$.

We summarize our approach to obtain the canonical blocks. If one of $\mathcal{E}$ and $\mathcal{Q}$ is nonzero, then we have described a method that  reduces $(\mathcal{E},\mathcal{Q})$ to $(H,K) \oplus (X,Y)$, where $(H,K)$ is a canonical summand and not a direct sum of canonical summands. This $(H,K)$ is obtained based on the parity of the longest nonzero finite alternating product of $\mathcal{E}^*$ and $\mathcal{Q}$. Since the events of $(\mathcal{E},\mathcal{Q})$ being a pair of zero matrices, the longest alternating nonzero finite product of $\mathcal{E}^*$ and $\mathcal{Q}$ having odd, and, respectively, even parity are three mutually exclusive events, the blocks obtained in each of these cases are irreplaceable. If one of $X^*$ and $Y$ is nonzero, the process continues depending on the parity of the longest nonzero finite alternating product of $X^*$ and $Y$, otherwise we are left with a canonical summand $(0,0)$. 
    
\textbf{Uniqueness}: 
We want to prove that for $A^*B = C^*D = 0$, $(A,B)$ is $ ^*\text{equivalent to }$ $ (C,D) \text{ via}$ a nonsingular $U$ and a unitary $V$ if and only if the parity of both the longest alternating nonzero finite product of $A^*$ and $B$ and that of $C^*$ and $D$ is same. First assume that exactly one of $A$ and $B$ is nonzero, then by $^*$-equivalence, exactly one of $C$ and $D$ is nonzero. Without loss of generality, let the nonzero matrices be $B$ and $D$. Clearly, then $A^*B = BA^* = C^*D = DC^* = 0$. The same equalities hold if $A$ and $C$ are nonzero. Thus, the result follows for this case. Now, assume that $A$, $B$, $C$ and $D$ are nonzero. Let the parity of both the longest alternating nonzero finite product of $A^*$, $B$ and that of $C^*$, $D$ be odd, and, respectively, even, respectively, i.e., $BA^* = 0$ and $DC^* \neq 0$. This clearly contradicts the 
$^*$-\text{equivalence} of $(A,B)$ and $(C,D)$ via $U \in GL_{n}(\mathbb C)$ and unitary $V$. The same contradiction argument holds if $BA^* \neq 0$ and $DC^* = 0$. This completes the proof of the assertion.
    
If there exists another reduction process that reduces $(A,B)$ to a different canonical form, say, $(X,Y)$ under $^*$-equivalence via $U \in GL_{n}(\mathbb C)$ and unitary $V$ with canonical summands $(\bigoplus_{i=1}^{k} X_i, \bigoplus_{i=1}^{k}Y_i)$ such that $(X_i,Y_i) \in \{(1,0),(0,1),(0,0),(G_1^T,F_1^T) \oplus (G_0,F_0)\}$. Then, as in the previous paragraph, $YX^* = 0$, whenever $BA^* = 0$ and $YX^* \neq 0$ whenever $BA^* \neq 0$. It is straightforward to check similar results if one of the matrices in the pair $(A,B)$ is zero. This leads to a contradiction to our assumption; it can be concluded that the canonical blocks are uniquely determined up to permutation of the summands.
\end{proof}

In the following remark, we address the problem of finding a canonical form under the condition that the columns of $\begin{bmatrix}
        \mathcal{E}\\\mathcal{Q}
    \end{bmatrix}$ form a Lagrangian subspace, i.e., a subspace ${S} \subset \mathbb{C}^{2n}$ such that $s^T\begin{bmatrix}
        0 & I_n\\
        -I_n & 0
    \end{bmatrix}t = 0$, for all $s,t \in {S}$.
    
\begin{remark}\label{lang}
    From the proof  of Theorem~\ref{nilpotent}, it is clear that under the condition that
    \[
    \mathcal L=\begin{bmatrix}
        \mathcal{E}\\\mathcal{Q}
    \end{bmatrix}
    \]
    is of full column rank, the canonical blocks are reduced to $(1,0),(0,1)$. Thus, if the columns of $\mathcal L$
    %
    span a Lagrangian subspace, then $(1,0),(0,1)$ are the only blocks in the canonical form under $^*$-equivalence via a nonsingular $U$ and a unitary $V$. Note, that a similar result has been derived for regular pencils $\lambda E - Q$ in Proposition 9 of \cite{MeMW18} using the CS decomposition \cite{GoVa13}.
\end{remark}

Our next results present the canonical forms under $^*$-equivalence for a pair of singular matrices $(E,Q)$ under the assumption that $E^*Q$ are Hermitian or skew-Hermitian.
\begin{theorem}\label{gerenal_ccf*}
    Let $(E,Q) \in (M_{n}(\mathbb C),M_{n}(\mathbb C))$ and suppose that $E^*Q$ is either Hermitian or skew-Hermitian. Then there exist $U,V \in GL_{n}(\mathbb C)$ such that
     $$
     (UEV,U^{-*}QV) = \bigoplus_{i=1}^f (A_i, B_i),
     $$ 
     where $f\in \mathbb{Z}_{+}$ and $(A_i,B_i) \in \{ (M,N), (1,0),(0,1),(0,0),(G_1^T,F_1^T) \oplus (G_0,F_0)\}  $\\
     $(M,N) \in \begin{cases}
         \{(1,1),(1,-1)\}, & \text{ if } E^*Q \text{ is Hermitian, }\\
         \{(1,1)\}, & \text{ if } E^*Q \text{ is Hermitian and positive semi-definite, }\\
         \{(1,i),(1,-i)\}, & \text{ if } E^*Q \text{ is skew-Hermitian. }
     \end{cases}$.

This canonical form can be determined uniquely up to a permutation of pairs of summands. Moreover, if the columns of $\begin{bmatrix}
         E\\
         Q
     \end{bmatrix}$ span a Lagrangian subspace, then the canonical blocks are of type $(1,0),(0,1),(M,N)$.
\end{theorem}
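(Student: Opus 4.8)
The plan is to split $(E,Q)$ into a ``regular'' part and a ``nilpotent'' part, reduce each by a result already available, and then reassemble the two reductions block-diagonally. First I would apply Lemma~\ref{lemma_true_all_*} to obtain $U_0,V_0\in GL_{n}(\mathbb C)$ with $(U_0EV_0,\,U_0^{-*}QV_0)=(I_k,\,\mathcal R_{E^*Q})\oplus(\mathcal E,\mathcal Q)$, where $k=\rank(E^*Q)$, the matrix $\mathcal R_{E^*Q}$ is the regular part of the $^{*}$-congruence canonical form of $E^*Q$, and $\mathcal E^*\mathcal Q=\mathcal N_{E^*Q}$ is its nilpotent part. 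Because $E^*Q$ is Hermitian (respectively skew-Hermitian), Sylvester's law of inertia together with the uniqueness of the $^{*}$-congruence canonical form (Theorem~\ref{*congruence}) forces $\mathcal N_{E^*Q}=0$, so $\mathcal E^*\mathcal Q=0$, and shows $\mathcal R_{E^*Q}=I_x\oplus(-I_y)$ (respectively $iI_x\oplus(-iI_y)$) with $x,y$ the inertia indices of $E^*Q$ (respectively of $iE^*Q$); this is precisely the observation recorded just before Theorem~\ref{nilpotent}.

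Next I would treat the two summands separately. The pair $(I_k,\mathcal R_{E^*Q})$ is already a direct sum of blocks of the claimed type $(M,N)$: $x$ copies of $(1,1)$ together with $y$ copies of $(1,-1)$ in the Hermitian case — only $(1,1)$ when $E^*Q$ is in addition positive semidefinite, since then $y=0$ — and $x$ copies of $(1,i)$ together with $y$ copies of $(1,-i)$ in the skew-Hermitian case; this matches Theorems~\ref{E^*Q is hermitian} and~\ref{E^*Q_is_sk-hermi} applied to the pair $(I_k,\mathcal R_{E^*Q})$, whose first entry is nonsingular, with the invertibility of $\mathcal R_{E^*Q}$ discarding the $(1,0)$ summand appearing there. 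The pair $(\mathcal E,\mathcal Q)$ consists of square matrices with $\mathcal E^*\mathcal Q=0$, so Theorem~\ref{nilpotent} furnishes a nonsingular $U_2$ and a unitary $V_2$ reducing it to a direct sum of blocks in $\{(1,0),(0,1),(0,0),(G_1^T,F_1^T)\oplus(G_0,F_0)\}$. Setting $U=(I_k\oplus U_2)U_0$ and $V=V_0(I_k\oplus V_2)$, which lie in $GL_{n}(\mathbb C)$, and using $(A_1\oplus A_2)(B_1\oplus B_2)(C_1\oplus C_2)=(A_1B_1C_1)\oplus(A_2B_2C_2)$ and $(A_1\oplus A_2)^{-*}=A_1^{-*}\oplus A_2^{-*}$, the two reductions combine into the asserted canonical form for $(E,Q)$.

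For uniqueness I would argue as follows. If $C=(C_E,C_Q)$ is any canonical form of $(E,Q)$, then $C_E^*C_Q$ is $^{*}$-congruent to $E^*Q$ because $(UEV)^*(U^{-*}QV)=V^*(E^*Q)V$, and $C_E^*C_Q$ is itself, up to permutation, in $^{*}$-congruence canonical form: $(1,1)$ contributes $[\,1\,]$, $(1,-1)$ contributes $[-1]$, $(1,i)$ contributes $[\,i\,]$, $(1,-i)$ contributes $[-i]$, while each of $(1,0),(0,1),(0,0)$ and $(G_1^T,F_1^T)\oplus(G_0,F_0)$ contributes only zero blocks. Hence by the uniqueness in Theorem~\ref{*congruence} the number of $(M,N)$ blocks of each kind and the total size $n-k$ of the complementary zero part are determined by $(E,Q)$; since the splitting off of $(I_k,\mathcal R_{E^*Q})$ is canonical — $k=\rank(E^*Q)$, and the $^{*}$-congruence class of $E^*Q$ is an invariant of the $^{*}$-equivalence class of $(E,Q)$ — the complementary part is the pair $(\mathcal E,\mathcal Q)$ up to $^{*}$-equivalence, and the uniqueness part of Theorem~\ref{nilpotent} fixes the remaining blocks. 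Finally, the Lagrangian addendum follows since a Lagrangian subspace of $\mathbb{C}^{2n}$ has dimension $n$, so $\begin{bmatrix} E \\ Q \end{bmatrix}$ has full column rank; this is preserved by $^{*}$-equivalence and inherited by $\begin{bmatrix} \mathcal E \\ \mathcal Q \end{bmatrix}$, and by Remark~\ref{lang} the only nilpotent summands compatible with full column rank are $(1,0)$ and $(0,1)$, since $(0,0)$ and $(G_1^T,F_1^T)\oplus(G_0,F_0)$ each contain a zero column in $\begin{bmatrix} \mathcal E \\ \mathcal Q \end{bmatrix}$.

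Most of the genuine work is already contained in Theorem~\ref{nilpotent}, so the assembly above is largely bookkeeping. The step I expect to require the most care is the uniqueness argument, specifically the justification that the regular summand $(I_k,\mathcal R_{E^*Q})$ splits off canonically — equivalently, that the residual nilpotent pair $(\mathcal E,\mathcal Q)$ is determined up to $^{*}$-equivalence by $(E,Q)$ — so that the uniqueness statement of Theorem~\ref{nilpotent} may legitimately be invoked for the nilpotent blocks.
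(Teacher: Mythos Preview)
Your proposal is correct and follows essentially the same route as the paper: split via Lemma~\ref{lemma_true_all_*} into $(I_k,\mathcal R_{E^*Q})\oplus(\mathcal E,\mathcal Q)$, use the (skew-)Hermitian hypothesis to force $\mathcal E^*\mathcal Q=0$, handle the regular part by Theorems~\ref{E^*Q is hermitian}/\ref{E^*Q_is_sk-hermi} and the nilpotent part by Theorem~\ref{nilpotent}, and assemble block-diagonally via Remark~\ref{solution_for_nilpotent}; the Lagrangian clause is exactly Remark~\ref{lang}. Your write-up is in fact more explicit than the paper's, which dispatches the existence part in two sentences.

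The concern you flag at the end is real and is precisely where the paper is also terse. The paper's uniqueness paragraph just cites Remark~\ref{solution_for_nilpotent} and Theorems~\ref{E^*Q is hermitian}, \ref{nilpotent} without spelling out why the nilpotent blocks are invariants of the full $^{*}$-equivalence class (rather than merely of the finer equivalence with unitary $V$ used in Theorem~\ref{nilpotent}). Your proposed fix---recover the $(M,N)$ multiplicities from the $^{*}$-congruence class of $E^*Q$ and then pin down the nilpotent block counts---can be completed by noting that $\rank E$, $\rank Q$, $\rank\begin{bmatrix}E\\Q\end{bmatrix}$, $\rank(E^*Q)$, and $n$ are all $^{*}$-equivalence invariants and together determine the numbers of $(1,0)$, $(0,1)$, $(0,0)$, and $(G_1^T,F_1^T)\oplus(G_0,F_0)$ summands by a short linear-algebra count. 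That closes the gap you anticipated.
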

\begin{proof}
     We derive the canonical form for the case that $E^*Q$ is Hermitian. The other cases can be derived in a  similar way. By Theorem \ref{E^*Q is hermitian} and Lemma \ref{lemma_true_all_*}, there exist $U,V \in GL_{n}(\mathbb C)$ such that $$(UEV,U^{-*}QV) = (I_{n-v} \oplus \mathcal{E}, I_x \oplus (-I_y) \oplus \mathcal{Q}),$$
    where $\mathcal{E}^*\mathcal{Q} = 0$. Since $E^*Q$ is Hermitian, then $\mathcal{E}^*\mathcal{Q}$ is Hermitian, i.e., $\mathcal{Q}^*\mathcal{E} = \mathcal{E}^*\mathcal{Q} = 0$. Using Theorem \ref{nilpotent}, Remark \ref{solution_for_nilpotent} and Remark \ref{lang}, the assertion follows.
 
    To show the {\bf uniqueness}, note tat for the matrix $E^*Q$, the cases that $E^*Q$ is nonsingular or zero are mutually exclusive. It is clear by Theorems~\ref{E^*Q is hermitian} and \ref{E^*Q_is_sk-hermi}, that blocks of type $(M,N)$ are the only canonical blocks if $E^*Q$ is nonsingular. If $E^*Q$ is neither nonsingular nor zero then by Remark \ref{solution_for_nilpotent}, Theorems \ref{E^*Q is hermitian} and \ref{nilpotent}, the summands are unique up to permutation, 
     \end{proof}
Using Lemma \ref{lemma_true_for_all}, Theorems \ref{E^TQ is symmetric} and \ref{K,L_can_form} and the methods in the proof  of Theorem \ref{nilpotent}, Remark \ref{lang} and Theorem \ref{gerenal_ccf*}, we have an analogous result for $^T$-equivalence.
\begin{theorem}\label{E^TQ_sym_sing}
    Suppose that $(E,Q) \in (M_{n}(\mathbb C),M_{n}(\mathbb C))$ and $E^TQ$ is either symmetric or skew-symmetric. Then there exist $U,V \in GL_{n}(\mathbb C)$ such that
    $$(UEV,U^{-T}QV) = \bigoplus_{i=1}^f (A_i, B_i),$$ 
     where $f\in \mathbb{Z}_{+}$, $(A_i,B_i) \in  \{(1,0),(0,1),(0,0), (G_1^T,F_1^T) \oplus (G_0,F_0), (M,N)\},  $ and,
     $(M,N) = \begin{cases}
         (1,1), & \text{if $E^TQ$ is symmetric,}\\
         (I_2,H_{2}(-1)), & \text{if $E^TQ$ is skew-symmetric.}
     \end{cases}$ The above canonical form can be determined uniquely up to permutation of pair of summands.  Moreover, if columns of $\begin{bmatrix}
         E\\
         Q
     \end{bmatrix}$ span a Lagrangian subspace, then the canonical blocks are of type $(1,0)$ ,$(0,1)$, or $(1,1)$.
\end{theorem}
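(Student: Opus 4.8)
The plan is to mirror the argument used for the $^*$-equivalence case in Theorem~\ref{gerenal_ccf*}, transporting every step from conjugate transpose to transpose and from $^*$-congruence to $^T$-congruence. The structural ingredients are already in place: Lemma~\ref{lemma_true_for_all} splits $(E,Q)$ under $^T$-equivalence into a regular part $(I_k,\mathcal R_{E^TQ})$ coming from the regular part of the congruence canonical form of $E^TQ$, and a residual pair $(\mathcal E,\mathcal Q)$ with $\mathcal E^T\mathcal Q$ nilpotent; Theorems~\ref{E^TQ is symmetric} and \ref{K,L_can_form} tell us exactly what $\mathcal R_{E^TQ}$ looks like once $E^TQ$ is (skew-)symmetric --- namely a direct sum of copies of $(1,1)$ in the symmetric case and of $(I_2,H_2(-1))$ in the skew-symmetric case --- so the $(M,N)$ part of the claimed list is immediate. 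It remains to handle the residual pair $(\mathcal E,\mathcal Q)$.

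First I would observe that the (skew-)symmetry of $E^TQ$ forces $\mathcal E^T\mathcal Q$ to be (skew-)symmetric as well; combined with nilpotency this forces $\mathcal E^T\mathcal Q=0$ in the symmetric case (a symmetric nilpotent matrix over $\mathbb C$ need not vanish in general, but here the relevant part of the congruence canonical form of a symmetric matrix has only $\Gamma_1$ and $J_1(0)$ blocks, so the nilpotent summand is the zero block --- this is exactly the reduction already used in the proof of Theorem~\ref{E^TQ is symmetric}), and similarly in the skew-symmetric case the nilpotent part is zero because a skew-symmetric matrix is congruent to a direct sum of $H_2(-1)$ blocks and a zero block. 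Hence in both structured cases $\mathcal E^T\mathcal Q=0$. Then I would invoke a transpose-analogue of Theorem~\ref{nilpotent}: for a pair $(\mathcal E,\mathcal Q)$ with $\mathcal E^T\mathcal Q=0$, running the same rank-one peeling argument (now with $\mathcal E^T$ in place of $\mathcal E^*$, picking $x,y$ with $y^T\mathcal Q\mathcal E^Tx\neq0$ or, in the odd-parity case, $y^T\mathcal E^Tx\neq0$ or $y^T\mathcal Q x\neq 0$, and using the identity $\mathcal E^T\mathcal Q=0$ to split off $(G_1^T,F_1^T)\oplus(G_0,F_0)$, $(1,0)$, $(0,1)$, or $(0,0)$ summands) yields the canonical blocks $(1,0),(0,1),(0,0),(G_1^T,F_1^T)\oplus(G_0,F_0)$. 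Note one simplification: since we only need $U,V\in GL_n(\mathbb C)$ here (not a unitary $V$, as in Theorem~\ref{nilpotent}), the normalizations involving $\|\mathcal E^*x\|$ can be replaced by arbitrary nonzero scalings, which actually makes the transpose argument slightly cleaner. Combining the $(M,N)$ regular blocks with these residual blocks gives the asserted list, and the Lagrangian refinement follows from Remark~\ref{lang}: if the columns of $[E;Q]$ span a Lagrangian subspace then $\mathcal L=[\mathcal E;\mathcal Q]$ has full column rank, which kills all residual blocks except $(1,0)$ and $(0,1)$, leaving only $(1,0),(0,1),(M,N)$.

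For uniqueness I would argue as in Theorem~\ref{gerenal_ccf*}: the multiplicities of the $(M,N)$ blocks are determined by the $^T$-congruence canonical form of the regular part of $E^TQ$ (Theorems~\ref{E^TQ is symmetric} and \ref{K,L_can_form}), which is an invariant by Theorem~\ref{congruence}; and the multiplicities of the residual blocks $(1,0),(0,1),(0,0),(G_1^T,F_1^T)\oplus(G_0,F_0)$ are determined by the transpose-version of the uniqueness argument in Theorem~\ref{nilpotent} (the parity of the longest nonzero finite alternating product of $\mathcal E^T$ and $\mathcal Q$ being invariant under $^T$-equivalence, together with rank counts). The main obstacle I anticipate is not conceptual but bookkeeping: one must check carefully that the peeling argument of Theorem~\ref{nilpotent}, which was written for $\mathcal E^*$ and a unitary $V$, goes through verbatim with transposes and a merely nonsingular $V$ --- in particular that the linear independence of the columns of $Z_1$ and the full-row-rank of $R$ still follow from $\mathcal E^T\mathcal Q=0$ (they do: multiplying the dependence relation by $\mathcal E^T$ on the left and using $\mathcal E^T\mathcal Q=0$ again forces $\mathcal E^Tx=0$, a contradiction), and that Lemma~\ref{lemma_nonsing} can be applied in exactly the same way. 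I would therefore state the proof as ``follows the proof of Theorem~\ref{nilpotent} and Theorem~\ref{gerenal_ccf*} with transpose replacing conjugate transpose and $^T$-congruence replacing $^*$-congruence, the only simplification being that $V$ need not be unitary,'' spelling out only the reduction that forces $\mathcal E^T\mathcal Q=0$ and the resulting block list.
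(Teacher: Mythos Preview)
Your proposal is correct and follows essentially the same approach as the paper, which proves this theorem by a one-line reference to Lemma~\ref{lemma_true_for_all}, Theorems~\ref{E^TQ is symmetric} and \ref{K,L_can_form}, the methods of Theorem~\ref{nilpotent}, Remark~\ref{lang}, and Theorem~\ref{gerenal_ccf*}, transported from conjugate transpose to transpose. Your write-up is in fact more detailed than the paper's own proof, and your observation that $V$ need not be unitary in the transpose analogue of Theorem~\ref{nilpotent} (so the orthonormal extension can be replaced by an application of Lemma~\ref{lemma_nonsing}) is a genuine simplification that the paper leaves implicit.
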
 
\section{Codimension}
In this section, we compute the codimensions of orbits of a pair of matrices  under the action of $^{T}$-equivalence and $^{*}$-equivalence. Following \cite{DeDo11_1,DeDo11,DmKS13,DmKS14}, we do it by computing the dimensions of the solution spaces of the associated systems of matrix equations. We only consider pairs where one of the matrices is nonsingular.
\subsection{Orbit under $^{T}$-equivalence}
Consider the action of $^{T}$-equivalence, i.e., $(GL_{n}(\mathbb C),GL_{n}(\mathbb C))$ acts on $(M_{n}(\mathbb C)$, $M_{n}(\mathbb C))$, i.e., $U,V$ acts on $(E,Q)$ by $(UEV$, $U^{-T}QV)$. Then the  the orbit under this action:
\begin{equation}\label{orbit_T_EQUIV}
    \mathcal{O}(E,Q) = \{ (UEV, U^{-T}QV):U,V \in (GL_{n}(\mathbb C),GL_{n}(\mathbb C)) \}.
\end{equation}
is a smooth manifold over $\mathbb{C}$ in $(M_{n}(\mathbb C)$, $M_{n}(\mathbb C))$   and the next lemma gives the tangent space.
%
\begin{lemma}\label{tangent_space}
   Let $(E,Q) \in (M_{n}(\mathbb C),M_{n}(\mathbb C))$ and let $\mathcal{O}(E,Q)$ in \eqref{orbit_T_EQUIV} be the orbit under the action of $^{T}$-equivalence. Then the tangent space of $\mathcal{O}(E,Q)$ at $(E,Q)$ is 
   $$\mathcal{T}(E,Q) = \{ (XE + EY, -X^TQ + QY) : (X,Y) \in (M_{n}(\mathbb C),M_{n}(\mathbb C)) \}.$$
\end{lemma}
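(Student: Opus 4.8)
The plan is to realize $\mathcal O(E,Q)$ as the image of the action map
$\Phi\colon GL_n(\mathbb C)\times GL_n(\mathbb C)\to M_n(\mathbb C)\times M_n(\mathbb C)$, $\Phi(U,V)=(UEV,\,U^{-T}QV)$, which satisfies $\Phi(I_n,I_n)=(E,Q)$, and to identify the tangent space $\mathcal T(E,Q)$ with the image of the differential $d\Phi_{(I_n,I_n)}$. Since $GL_n(\mathbb C)$ is open in $M_n(\mathbb C)$, the tangent space of $GL_n(\mathbb C)\times GL_n(\mathbb C)$ at $(I_n,I_n)$ is the whole space $M_n(\mathbb C)\times M_n(\mathbb C)$, so it suffices to push an arbitrary pair $(X,Y)$ through $d\Phi_{(I_n,I_n)}$.

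First I would choose smooth curves $U(t)=I_n+tX+o(t)$ and $V(t)=I_n+tY+o(t)$ in $GL_n(\mathbb C)$ (these lie in $GL_n(\mathbb C)$ for $|t|$ small) with $\dot U(0)=X$ and $\dot V(0)=Y$. Differentiating the first component of $\Phi(U(t),V(t))$ at $t=0$ gives $XE+EY$. For the second component I would use $U(t)^{-1}=I_n-tX+o(t)$, hence $U(t)^{-T}=I_n-tX^{T}+o(t)$, and obtain $\tfrac{d}{dt}\big|_{t=0}\,U(t)^{-T}QV(t)=-X^{T}Q+QY$. Thus $d\Phi_{(I_n,I_n)}(X,Y)=(XE+EY,\,-X^{T}Q+QY)$, and the image of this map is exactly the set in the statement of the lemma.

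Finally I would argue that this image is the full tangent space and not merely a subspace of it. This follows because $\mathcal O(E,Q)$ is the orbit of the linear algebraic group $GL_n(\mathbb C)\times GL_n(\mathbb C)$ acting rationally on the affine space $M_n(\mathbb C)\times M_n(\mathbb C)$; such an orbit is a smooth, locally closed subvariety, the map $\Phi$ regarded as a map onto $\mathcal O(E,Q)$ is a submersion, and therefore $\mathcal T(E,Q)=\operatorname{im} d\Phi_{(I_n,I_n)}$, exactly as in the treatment of congruence orbits in \cite{DeDo11,DeDo11_1}. The only point requiring care in the whole argument is the differentiation of $U\mapsto U^{-T}$ and the resulting minus sign in front of $X^{T}Q$; the remainder is the standard homogeneous-space computation.
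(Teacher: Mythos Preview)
Your proof is correct and follows essentially the same route as the paper: the paper perturbs by $(I+\epsilon X,\,I+\epsilon Y)$ and reads off the first-order terms $(XE+EY,\,-X^TQ+QY)$, exactly as you do with your curves $U(t),V(t)$. Your explicit remark that the orbit map is a submersion (so that the image of $d\Phi_{(I,I)}$ is the full tangent space) is a point the paper leaves implicit, but otherwise the arguments coincide.
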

\begin{proof}
    Following the proof of \cite[p. 71]{DeEd95}, let $\epsilon$ be a small scalar and consider the action of $^T$-equivalence by ($I + \epsilon X,I + \epsilon Y)$ on $(E,Q)$. Then, $((I + \epsilon X)E(I + \epsilon Y), (I + \epsilon X)^{-T}Q(I + \epsilon Y))
    = (E + \epsilon(XE + EY) + O(\epsilon^2), Q + \epsilon(-X^TQ + QY) + O(\epsilon^2))$. Hence, the assertion follows.
\end{proof}

The next lemma derives  the codimension of the orbit in \eqref{orbit_T_EQUIV}.
\begin{lemma}\label{cod_T_equiv}
    Let $(E,Q) \in (M_{n}(\mathbb C),M_{n}(\mathbb C))$ and $\mathcal{O}(E,Q)$ in \eqref{orbit_T_EQUIV} be the orbit under the action of $^{T}$-equivalence. Then the codimension of $\mathcal{O}(E,Q)$ is the dimension of solution space of the system:
\begin{equation}\label{sys_eq_tan}
    XE + EY = 0, \text{ } -X^TQ + QY = 0, \text{ } (X,Y) \in (M_{n}(\mathbb C),M_{n}(\mathbb C)).
\end{equation}
\end{lemma}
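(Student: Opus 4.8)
The plan is to relate the codimension of the orbit $\mathcal{O}(E,Q)$ to the dimension of its tangent space, exactly as in the analogous statements for $^{T}$-congruence orbits quoted above (see the discussion after Theorem~\ref{*cong_cod} and the argument in \cite{DeDo11,DeEd95}). Since $\mathcal{O}(E,Q)$ is a smooth complex manifold in $(M_{n}(\mathbb C),M_{n}(\mathbb C))$, its codimension as a manifold equals the complex codimension of the tangent space $\mathcal{T}(E,Q)$ inside the ambient space $(M_{n}(\mathbb C),M_{n}(\mathbb C))\cong \mathbb{C}^{2n^2}$. By Lemma~\ref{tangent_space} we already have an explicit description of $\mathcal{T}(E,Q)$ as the image of the linear map
$$
\varphi:(M_{n}(\mathbb C),M_{n}(\mathbb C))\to (M_{n}(\mathbb C),M_{n}(\mathbb C)),\qquad
\varphi(X,Y)=(XE+EY,\,-X^TQ+QY).
$$
So $\dim_{\mathbb C}\mathcal{T}(E,Q)=\operatorname{rank}\varphi$ and $\operatorname{codim}\mathcal{O}(E,Q)=2n^2-\operatorname{rank}\varphi=\dim_{\mathbb C}\ker\varphi$ by the rank--nullity theorem. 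The kernel of $\varphi$ is precisely the solution space of the system \eqref{sys_eq_tan}, which gives the claim.

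Concretely, the steps I would carry out are: first, invoke the general principle that for a Lie group action the tangent space to an orbit at a point has dimension equal to the rank of the corresponding infinitesimal action map, and that the orbit, being a homogeneous space, is a smooth submanifold whose codimension is the codimension of this tangent space; this is the same reasoning already used for $\mathcal{O}(M)$ under $^{T}$-congruence in Section~2 and is justified in \cite{DeEd95}. Second, combine this with Lemma~\ref{tangent_space}, which identifies $\mathcal{T}(E,Q)=\operatorname{im}\varphi$. Third, apply rank--nullity to $\varphi$ viewed as a $\mathbb C$-linear endomorphism of $(M_{n}(\mathbb C),M_{n}(\mathbb C))$: $\operatorname{codim}_{\mathbb C}\operatorname{im}\varphi=\dim_{\mathbb C}\ker\varphi$. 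Fourth, observe that $(X,Y)\in\ker\varphi$ if and only if $XE+EY=0$ and $-X^TQ+QY=0$, i.e. $(X,Y)$ solves \eqref{sys_eq_tan}; hence the codimension of $\mathcal{O}(E,Q)$ equals the dimension of the solution space of \eqref{sys_eq_tan}.

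I do not expect a serious obstacle here: the content is essentially bookkeeping once Lemma~\ref{tangent_space} is in hand, and the only point that requires a word of care is that this is a statement about complex dimensions/codimensions (in contrast to the $^{*}$-equivalence case, which will have to be treated over $\mathbb R$, mirroring the distinction between Theorem~\ref{cod_cong} and Theorem~\ref{*cong_cod}). The mildest technical check is that the orbit really is a smooth manifold of the expected dimension — this follows because it is the orbit of the algebraic group $GL_n(\mathbb C)\times GL_n(\mathbb C)$ acting (algebraically, hence smoothly) on an affine space, so the orbit is a locally closed smooth subvariety and the orbit map is a submersion onto it, giving $\dim\mathcal{O}(E,Q)=\dim\mathcal{T}(E,Q)$. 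With that in place the proof is a one-line application of rank--nullity, so I would keep the write-up short and simply reference Lemma~\ref{tangent_space} and \cite{DeEd95,DeDo11}.
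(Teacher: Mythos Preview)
Your proposal is correct and follows essentially the same approach as the paper: define the linear map $\varphi(X,Y)=(XE+EY,\,-X^TQ+QY)$, identify its image with $\mathcal{T}(E,Q)$ via Lemma~\ref{tangent_space}, and apply rank--nullity to conclude that $\operatorname{codim}\mathcal{O}(E,Q)=2n^2-\dim\mathcal{T}(E,Q)=\dim\ker\varphi$, which is the solution space of \eqref{sys_eq_tan}. The paper's write-up is terser (it does not spell out the smooth-orbit justification), but the argument is the same.
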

\begin{proof}
    Since for a pair $(E,Q)\in (M_{n}(\mathbb C)$, $M_{n}(\mathbb C))$, the dimension of the orbit $\mathcal{O}(E,Q)$ is equal to the dimension of the tangent space $\mathcal{T}(E,Q)$ to $\mathcal{O}(E,Q)$ at $(E,Q)$, the codimension of $\mathcal{O}(E,Q)$ is $2n^2$ minus the dimension of $\mathcal{T}(E,Q)$. 
    Let $f$ be the map that sends $(X,Y) \in (M_{n}(\mathbb C),M_{n}(\mathbb C))$ to $(XE + EY, -X^TQ + QY)$ is $\mathcal{T}(E,Q)$. Then the codimension of $\mathcal{O}(E,Q)$ is equal to the dimension of the nullspace of the map $f$ and the assertion follows.
\end{proof}

To determine the dimension of the solution space of \eqref{sys_eq_tan} we use the following lemma.
\begin{lemma}\label{sol_bet_sys}
    Let $(E,Q)$, $(E_1,Q_1) \in (M_{n}(\mathbb C),M_{n}(\mathbb C))$ be such that, for $U,V \in GL_{n}(\mathbb C)$,
    \begin{equation}\label{equi_eq}
    (E_1,Q_1) = (UEV,U^{-T}QV).
\end{equation} Let $(X^{'},Y^{'})\in (M_{n}(\mathbb C)$, $M_{n}(\mathbb C))$ be uch that $X^{'} = UXU^{-1}$ and $Y^{'} = V^{-1}YV$. Then $(X,Y)$ is a solution of the system \eqref{sys_eq_tan} if and only if $(X^{'},Y^{'})$ is a solution of the system 
\begin{equation}\label{sys_eq_tan_new}
    X^{'}E_1 + E_1Y^{'} = 0, \text{ } -X^{'T}Q_1 + Q_1Y^{'} = 0.
\end{equation}
\end{lemma}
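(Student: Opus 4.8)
The plan is to carry out a direct substitution and verify the equivalence of the two systems, exploiting the fact that the transformation relating $(E,Q)$ and $(E_1,Q_1)$ in \eqref{equi_eq} is invertible. First I would assume that $(X,Y)$ solves \eqref{sys_eq_tan}, i.e.\ $XE+EY=0$ and $-X^TQ+QY=0$, and substitute $E_1=UEV$, $Q_1=U^{-T}QV$, $X'=UXU^{-1}$, $Y'=V^{-1}YV$ into the left-hand sides of \eqref{sys_eq_tan_new}. For the first equation, $X'E_1+E_1Y' = UXU^{-1}\cdot UEV + UEV\cdot V^{-1}YV = U(XE+EY)V$, which vanishes precisely because $XE+EY=0$ and $U,V$ are nonsingular. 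For the second equation one computes $X'^T = (UXU^{-1})^T = U^{-T}X^TU^T$, so $-X'^TQ_1 + Q_1Y' = -U^{-T}X^TU^T\cdot U^{-T}QV + U^{-T}QV\cdot V^{-1}YV = U^{-T}(-X^TQ+QY)V$, which again vanishes iff $-X^TQ+QY=0$. Hence a solution of \eqref{sys_eq_tan} is carried to a solution of \eqref{sys_eq_tan_new}.

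For the converse, I would observe that the relations $X'=UXU^{-1}$, $Y'=V^{-1}YV$ are themselves invertible: given $(X',Y')$ one recovers $X=U^{-1}X'U$ and $Y=VY'V^{-1}$. Moreover \eqref{equi_eq} can be read backwards as $(E,Q)=(U^{-1}E_1V^{-1},\,U^{T}Q_1V^{-1})$, which is again an instance of the same type of transformation with the roles of the pairs and of $(U,V)$ replaced by $(U^{-1},V^{-1})$. Therefore the forward implication, already established, applied to this reversed situation gives that a solution $(X',Y')$ of \eqref{sys_eq_tan_new} yields a solution $(X,Y)$ of \eqref{sys_eq_tan}; and the two changes of variables are mutually inverse, so this is exactly the converse needed. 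Alternatively, one simply runs the same substitution computation in the other direction, multiplying the identities $U(XE+EY)V=X'E_1+E_1Y'$ and $U^{-T}(-X^TQ+QY)V = -X'^TQ_1+Q_1Y'$ on the left and right by the appropriate inverses.

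I do not anticipate a genuine obstacle here: the statement is essentially bookkeeping, and the only point requiring a moment's care is the transpose identity $(UXU^{-1})^T = U^{-T}X^TU^T$ together with matching it against $Q_1=U^{-T}QV$ so that the factors $U^T$ and $U^{-T}$ cancel cleanly; this is exactly why the conjugation of $X$ is by $U$ (and not by, say, $V$) and the conjugation of $Y$ is by $V^{-1}$. I would present the proof as the two displayed chains of equalities
\[
X'E_1+E_1Y' = U(XE+EY)V, \qquad -X'^TQ_1+Q_1Y' = U^{-T}(-X^TQ+QY)V,
\]
and then note that, since $U$ and $V$ are nonsingular, each right-hand side vanishes if and only if the corresponding bracket vanishes, which is precisely the asserted equivalence between \eqref{sys_eq_tan} and \eqref{sys_eq_tan_new}.
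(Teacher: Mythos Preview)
Your proof is correct and is essentially the same direct-substitution argument as the paper's: the paper multiplies the equations of \eqref{sys_eq_tan} on the left by $U$ (resp.\ $U^{-T}$) and on the right by $V$, then inserts $U^{-1}U$ and $VV^{-1}$ to regroup into the form \eqref{sys_eq_tan_new}, which is exactly your displayed identities $X'E_1+E_1Y'=U(XE+EY)V$ and $-X'^TQ_1+Q_1Y'=U^{-T}(-X^TQ+QY)V$ read the other way. The converse is handled identically via the invertibility of $U,V$.
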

\begin{proof}
    Let $(E,Q)$, $(E_1,Q_1)$, $X'$, $Y'$, $X$ and $Y$ be as stated. Multiplying the first equation and second equation of the system \eqref{sys_eq_tan} by $U$ and $U^{-T}$, respectively, on the left and both the equations by $V$ on the right, we get,
$UXEV + UEYV = 0$ 
and $-U^{-T}X^TQV+U^{-T}QYV = 0$ which can be re-written as 
$(UXU^{-1})\cdot (UEV) + (UEV)\cdot (V^{-1}YV) = 0$ 
and $-(U^{-T}X^TU^{T})\cdot (U^{-T}$ $QV) + (U^{-T}QV)\cdot (V^{-1}YV) = 0$.
Then, using \eqref{equi_eq}, we get \eqref{sys_eq_tan_new}. Now it is easy to verify that $(X,Y)$ is a solution of \eqref{sys_eq_tan} if and only if $(X^{'},Y^{'})$ is a solution of \eqref{sys_eq_tan_new}
\end{proof}
By Lemma \ref{sol_bet_sys}, system \eqref{sys_eq_tan} is equivalent to the system \eqref{sys_eq_tan_new}. Hence we can solve the system \eqref{sys_eq_tan} by reducing it to its canonical form under $^{T}$-equivalence.
\begin{theorem}\label{codimension_T}
    Let $(E,Q) \in (M_{n}(\mathbb C),M_{n}(\mathbb C))$ and at least one of $E$ or $Q$ is nonsingular. For $U,V \in GL_{n}(\mathbb C)$, the canonical forms under the transformation $(E,Q) \rightarrow (UEV,U^{-T}QV)$, are:\\
    for nonsingular $E$,
    $$(\mathcal{C}_E, \mathcal{C}_Q) = (I_n,(\bigoplus_{i = 1}^{p} H_{2m_i}(\mu_i)) \oplus (\bigoplus_{j = 1}^{q} \Gamma_{n_j})\oplus (\bigoplus_{k = 1}^{s}J_{r_k}(0))), \text{ and}$$ 
    for nonsingular $Q$,
    $$(\mathcal{C}_E, \mathcal{C}_Q) = ((I_{n-v} \oplus (\bigoplus_{k = 1}^{s}J_{r_k}(0)^{T}),(\bigoplus_{i = 1}^{p} H_{2m_i}(\mu_i)) \oplus (\bigoplus_{j = 1}^{q} \Gamma_{n_j}) \oplus I_v)),$$  where $v = \sum_{k=1}^{s}r_k$, $ 0 \neq \mu \neq (-1)^{n+1}$, $\mu $ is determined up to replacement by $\mu^{-1}$. This canonical form is uniquely determined up to permutation of pairs of summands. 
    
    Define $\mathcal{J}_{r_i} = 
         \begin{cases}
             (I_{r_i},J_{r_i}(0)), \text{ when $E$ is nonsingular,}\\
             (J_{r_i}(0)^T,I_{r_i}), \text{ when $Q$ is nonsingular.}
         \end{cases}$
         
    Then, the codimension of the orbit $\mathcal{O}(E,Q)$ under the action of $^T$-equivalence can be computed as the following sum:
    $$ c_{(E,Q)} = c_{0} + c_{1} + c_{2} + c_{00} + c_{11} + c_{22} + c_{01} + c_{02} + c_{12}.$$
    In this sum, the terms have the following expressions:
    \begin{itemize}
         \item 
         $ c_0 = \sum_{i=1}^{s} \big\lceil\dfrac{r_i}{2}\big\rceil$, $c_1 = \sum_{i=1}^{q} \big\lfloor\dfrac{n_j}{2}\big\rfloor$, $c_2 = \sum_{i=1}^{p} m_i + 2\sum_j \big\lceil\dfrac{m_j}{2}\big\rceil$, where the sum $\sum_j \big\lceil\dfrac{m_j}{2}\big\rceil$ is taken over the blocks $(I_{2m_j},H_{2m_j}((-1)^{m_j}))$ in $(\mathcal{C}_E, \mathcal{C}_Q)$.
         \item $c_{00} = \sum_{i,j=1, i<j}^{s} \mbox{\rm \mbox{\rm inter}}(J_{r_i}(0),J_{r_j}(0)), \text{ where,}\\ 
         \mbox{\rm \mbox{\rm inter}}(J_{r_i}(0),J_{r_j}(0)) =
         \begin{cases}
              r_j, \text{ if } r_j \text{ is even, }\\ 
              r_i, \text{ if } r_j \text{ is odd and } r_i \neq r_j,\\
              r_i + 1, \text{ if } r_j \text{ is odd and } r_i = r_j.
         \end{cases}$
         \item 
         $c_{11} = \sum \min\{n_{i},n_{j}\},$ where the sum runs over all pairs $((I_{n_i},\Gamma_{n_i})$, $(I_{n_j},\Gamma_{n_j}))$, $ i<j,$ in $(\mathcal{C}_E, \mathcal{C}_Q)$ such that $n_i$ and $n_j$ have the parity (both odd or even).
         \item 
         $c_{22} = 2\sum \min\{m_i,m_j\} + 4\sum \min\{m_s,m_t\},$ where the first sum is taken over all pairs $((I_{2m_i},H_{2m_i}(\mu_i)), (I_{2m_j},H_{2m_j}(\mu_j))), i<j$, of blocks in $(\mathcal{C}_E, \mathcal{C}_Q)$ such that either $''\mu_i \neq \mu_j \text{ and } \mu_i\mu_j = 1''$ or $\mu_i = \mu_j \neq \pm 1$; and the second sum is taken over all pairs  of blocks $((I_{2m_s},H_{2m_s}(\mu_s)),(I_{2m_t},H_{2m_t}(\mu_t)))$, $s<t$, in $(\mathcal{C}_E, \mathcal{C}_Q)$ such that $\mu_s = \mu_t = \pm 1$.
         \item 
         $c_{12} = 2\sum \min\{k,l\},$ where the sum is taken over all pairs $((I_k,\Gamma_k),(I_{2l}$, $H_{2l}((-1)^{k+1})))$ of blocks in $(\mathcal{C}_E, \mathcal{C}_Q)$.
         \item $c_{01} = N_{odd}\cdot \sum_{i=1}^{q}n_i,$ where $N_{odd}$ is the number of blocks $\mathcal{J}_{r_i}$ with odd size in $(\mathcal{C}_E, \mathcal{C}_Q)$.
         \item $c_{02} = N_{odd}\cdot \sum_{i=1}^{p}m_i,$ where $N_{odd}$ is the number of blocks $\mathcal{J}_{r_i}$ with odd size in $(\mathcal{C}_E, \mathcal{C}_Q)$.
    \end{itemize}
\end{theorem}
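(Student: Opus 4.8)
The plan is to reduce the computation of the codimension of $\mathcal{O}(E,Q)$ under $^T$-equivalence to the already-computed codimension of a $^T$-congruence orbit, and then read off the formula term by term from Theorem~\ref{cod_cong}. First I would invoke Lemma~\ref{cod_T_equiv} to identify the codimension of $\mathcal{O}(E,Q)$ with the dimension of the solution space of the system $XE+EY=0$, $-X^TQ+QY=0$, and then use Lemma~\ref{sol_bet_sys} to replace $(E,Q)$ by its canonical pair $(\mathcal{C}_E,\mathcal{C}_Q)$ from Theorem~\ref{CCF_*_equi} (whose existence and uniqueness are restated in the statement). So without loss of generality we may assume $(E,Q)=(\mathcal{C}_E,\mathcal{C}_Q)$; I will treat the case $E$ nonsingular, the case $Q$ nonsingular being symmetric via transposition of both equations.

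When $E=I_n$, the first equation gives $Y=-X$, and substituting into the second yields $-X^TQ-QX=0$, i.e. $X^TQ+QX=0$. This is exactly the equation whose solution space dimension is, by \cite{DeDo11} (as recalled in Section~2), the codimension of the $^T$-congruence orbit $\mathcal{O}(Q)$. Hence $c_{(E,Q)}=c_Q$, and the right-hand side of the claimed formula is precisely the expansion of $c_Q$ from Theorem~\ref{cod_cong} applied to the congruence canonical form $\mathcal{C}_Q = \big(\bigoplus_i H_{2m_i}(\mu_i)\big)\oplus\big(\bigoplus_j\Gamma_{n_j}\big)\oplus\big(\bigoplus_k J_{r_k}(0)\big)$: the Jordan blocks $J_{r_k}(0)$ play the role of the Type~0 summands, the $\Gamma_{n_j}$ are the Type~I summands, and the $H_{2m_i}(\mu_i)$ are the Type~II summands, so $c_0,c_1,c_2,c_{00},c_{11},c_{22},c_{01},c_{02},c_{12}$ match the nine components listed in Theorem~\ref{cod_cong} after renaming $p_i\leftrightarrow r_i$, $q_i\leftrightarrow n_i$, $r_i\leftrightarrow m_i$. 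The notation $\mathcal{J}_{r_i}=(I_{r_i},J_{r_i}(0))$ is introduced only to phrase "$N_{odd}$ = number of Type~0 summands of odd size" in terms of pairs of canonical blocks rather than of $Q$ alone, and the bookkeeping is identical in the two cases. For the case $Q$ nonsingular one takes $Q=I_n$ after a permutation, and the second equation gives $X^T=Y$, which substituted into $XE+EY=0$ gives $Y^TE+EY=0$ — again a $^T$-congruence equation, now for $E=\mathcal{C}_E$; here the odd-size Type~0 blocks are the summands $J_{r_k}(0)^T$, which is why $\mathcal{J}_{r_i}$ is defined with a transpose in that case.

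The main obstacle is the bookkeeping in the case where $Q$ (respectively $E$) is singular but the other matrix is not, because then $(\mathcal{C}_E,\mathcal{C}_Q)$ is not simply $(I_n,\mathcal C_Q)$: for nonsingular $E$ the pair is genuinely $(I_n,\mathcal C_Q)$ with $\mathcal C_Q$ allowed to have nilpotent $J_{r_k}(0)$ summands, so the reduction to $X^TQ+QX=0$ goes through verbatim and there is actually nothing delicate — the subtlety is only that one must check the substitution $Y=-X$ does not lose solutions, which it does not since it is an equivalence. For nonsingular $Q$, the step $Q=I_n$ requires first conjugating $(\mathcal{C}_E,\mathcal{C}_Q)$ by the permutation that moves the trailing $I_v$ block into the $I_{n-v}$ position; this is again a $^T$-equivalence (by a permutation matrix $P$, acting as $(P^TEP, P^{-T}QP)=(P^TEP,P^TQP)$ since $P$ is orthogonal), so by Lemma~\ref{sol_bet_sys} it does not change the codimension, and afterwards $E$ has canonical form $I_n\oplus(\text{transposed nilpotent part})$ up to a further permutation, so $Y^TE+EY=0$ is the congruence equation for a matrix $^T$-congruent to $\mathcal C_E$. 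Since $J_{r}(0)^T$ is $^T$-congruent to $J_r(0)$ (they are similar and both nilpotent of the same Jordan type, and for nilpotent matrices similarity implies $^T$-congruence by the classification in Theorem~\ref{congruence}), the Type~0 contribution is unchanged, which justifies writing the formula symmetrically in the two cases with the single device $\mathcal{J}_{r_i}$. I would then close by remarking that uniqueness of $(\mathcal C_E,\mathcal C_Q)$ is inherited from Theorem~\ref{CCF_*_equi}, so the codimension formula depends only on the pair $(E,Q)$, as asserted.
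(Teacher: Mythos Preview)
Your treatment of the nonsingular-$E$ case is correct and matches the paper exactly: substitute the canonical pair, use $E=I_n$ to eliminate $Y=-X$, and recognize $X^T\mathcal C_Q+\mathcal C_QX=0$ as the $^T$-congruence codimension equation, then read off the terms from Theorem~\ref{cod_cong}.

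The nonsingular-$Q$ case, however, has a genuine gap. You claim that a permutation will bring $\mathcal C_Q$ to $I_n$, but recall that $\mathcal C_Q=\mathcal C_{Q_1}\oplus I_v$ with $\mathcal C_{Q_1}=\bigl(\bigoplus_i H_{2m_i}(\mu_i)\bigr)\oplus\bigl(\bigoplus_j\Gamma_{n_j}\bigr)$; the blocks $H_{2m_i}(\mu_i)$ and $\Gamma_{n_j}$ are not identity blocks, and no permutation (indeed no conjugation by a permutation matrix) will turn them into one. So the step ``take $Q=I_n$ after a permutation'' fails, and with it your substitution $X^T=Y$.

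There is an easy repair that stays within your overall strategy. With $Q$ nonsingular, solve the second equation for $X=Q^{-T}Y^TQ^T$ and substitute into the first to obtain $Y^T(Q^TE)+(Q^TE)Y=0$; this is the congruence codimension equation for $Q^TE=(E^TQ)^T$. Since every square matrix is $^T$-congruent to its transpose (each canonical block $J_n(0)$, $\Gamma_n$, $H_{2n}(\mu)$ in Theorem~\ref{congruence} is $^T$-congruent to its transpose, as one checks from the cosquare), $Q^TE$ has the same congruence canonical form as $E^TQ$, and Theorem~\ref{cod_cong} again yields the stated sum. Equivalently, one can invoke the swap symmetry $(X,Y)\mapsto(-X^T,Y)$ between the systems for $(E,Q)$ and $(Q,E)$ and apply the nonsingular-first-entry case to $(Q,E)$.

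The paper, by contrast, does \emph{not} reduce the nonsingular-$Q$ case to a single congruence equation. It keeps the block structure $\mathcal C_E=I_{n-v}\oplus\mathcal C_{E_1}$, $\mathcal C_Q=\mathcal C_{Q_1}\oplus I_v$, partitions $X,Y$ conformally into $2\times2$ blocks, and after elimination obtains two independent congruence equations (one for $\mathcal C_{Q_1}$, one for $\mathcal C_{E_1}$) together with a coupled pair whose solution dimension is the interaction $\mathrm{inter}(\mathcal C_{E_1}^T,\mathcal C_{Q_1})$; it then computes this interaction explicitly to recover $c_{01}+c_{02}$. Your (repaired) route is shorter; the paper's route has the advantage of making visible exactly which summands in the codimension come from which block interactions.
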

\begin{proof} Note that by Theorem \ref{CCF_*_equi}, the canonical summands of $E^TQ$ under $^T$-congruence are in one-to-one correspondence to those of $(E,Q)$ under 
$^T$-equivalence.  We consider both the cases of $E$ and $Q$ being nonsingular separately.\\ 
\textbf{E is nonsingular:}
In this case, $(\mathcal{C}_E, \mathcal{C}_Q) = (I_{n}, (\bigoplus_{i = 1}^{p} H_{2m_i}(\mu_i)) \oplus (\bigoplus_{j = 1}^{q} \Gamma_{n_j}) \oplus (\bigoplus_{k = 1}^{s}J_{r_k}(0))).$ Substituting $(\mathcal{C}_E, \mathcal{C}_Q)$ in system \eqref{sys_eq_tan}, we get, 
   \begin{equation}\label{sys_eq_redu}
        X + Y = 0, \text{  } -X^T\mathcal{C}_Q + \mathcal{C}_QY = 0, \text{ or equivalently, } Y^T\mathcal{C}_Q + \mathcal{C}_QY = 0.
    \end{equation}
    Because of Lemma \ref{sol_bet_sys} and \ref{cod_T_equiv}, our problem has boiled down to finding the dimension of the solution space of \eqref{sys_eq_redu} which is the codimension of the orbit of $\mathcal{C}_Q$ under the action of $^{T}$-congruence \cite{DeDo11}. Thus, the assertion follows from Theorem~\ref{cod_cong}.\\
\textbf{$Q$ is nonsingular:} Let $\mathcal{C}_Q = 
\begin{bmatrix}
    \mathcal{C}_{Q_1} & 0\\
    0 & I_{v}
\end{bmatrix}, \mathcal{C}_E = 
\begin{bmatrix}
    I_{n-v} & 0\\
    0 & \mathcal{C}_{E_1}
\end{bmatrix},$ where $\mathcal{C}_{Q_1} = (\bigoplus_{i = 1}^{p} H_{2m_i}) \oplus (\bigoplus_{j = 1}^{q} \Gamma_{n_j})$, $\mathcal{C}_{E_1} = \bigoplus_{k = 1}^{s}J_{r_k}(0)^{T}$. Partition matrices X, Y in \eqref{sys_eq_redu} conformably such that \eqref{sys_eq_redu} is equivalent to
\begin{equation*}
    \begin{bmatrix}
        X_{11} & X_{12}\\
        X_{21} & X_{22}
    \end{bmatrix}
    \begin{bmatrix}
        I_{n-v} & 0\\
        0 & \mathcal{C}_{E_1}
    \end{bmatrix} +
    \begin{bmatrix}
        I_{n-v} & 0\\
        0 & \mathcal{C}_{E_1}
    \end{bmatrix}
    \begin{bmatrix}
        Y_{11} & Y_{12}\\
        Y_{21} & Y_{22}
    \end{bmatrix} = 0
\end{equation*}
\begin{equation*}
    -\begin{bmatrix}
        X_{11} & X_{12}\\
        X_{21} & X_{22}
    \end{bmatrix}^T
    \begin{bmatrix}
        \mathcal{C}_{Q_1} & 0\\
        0 & I_{v}
    \end{bmatrix} +
    \begin{bmatrix}
        \mathcal{C}_{Q_1} & 0\\
        0 & I_{v}
    \end{bmatrix}
    \begin{bmatrix}
        Y_{11} & Y_{12}\\
        Y_{21} & Y_{22}
    \end{bmatrix} = 0.
\end{equation*} i.e.,
\begin{equation*}
    \begin{bmatrix}
        X_{11} & X_{12}\\
        X_{21} & X_{22}
    \end{bmatrix}
    \begin{bmatrix}
        I_{n-v} & 0\\
        0 & \mathcal{C}_{E_1}
    \end{bmatrix} +
    \begin{bmatrix}
        I_{n-v} & 0\\
        0 & \mathcal{C}_{E_1}
    \end{bmatrix}
    \begin{bmatrix}
        Y_{11} & Y_{12}\\
        Y_{21} & Y_{22}
    \end{bmatrix} = 0
\end{equation*}
\begin{equation*}
    -\begin{bmatrix}
        X_{11}^T & X_{21}^T\\
        X_{12}^T & X_{22}^T
    \end{bmatrix}
    \begin{bmatrix}
        \mathcal{C}_{Q_1} & 0\\
        0 & I_{v}
    \end{bmatrix} +
    \begin{bmatrix}
        \mathcal{C}_{Q_1} & 0\\
        0 & I_{v}
    \end{bmatrix}
    \begin{bmatrix}
        Y_{11} & Y_{12}\\
        Y_{21} & Y_{22}
    \end{bmatrix} = 0.
\end{equation*}
i.e.,
\begin{equation} \label{eq_13_cod}
    X_{11} + Y_{11} = 0, 
\end{equation} 
\begin{equation} \label{eq_14_cod}
    X_{12}\mathcal{C}_{E_1} + Y_{12} = 0,
    \end{equation}
\begin{equation} \label{eq_15_cod}
    X_{21} + \mathcal{C}_{E_1}Y_{21} = 0,
\end{equation}
\begin{equation} \label{eq_16_cod}
    X_{22}\mathcal{C}_{E_1} + \mathcal{C}_{E_1}Y_{22} = 0,
\end{equation}
\begin{equation} \label{eq_17_cod}
    -X_{11}^T\mathcal{C}_{Q_1} + \mathcal{C}_{Q_1}Y_{11} = 0,
\end{equation}
\begin{equation} \label{eq_18_cod}
    -X_{21}^T + \mathcal{C}_{Q_1}Y_{12} = 0,
\end{equation}
\begin{equation} \label{eq_19_cod}
    -X_{12}^T\mathcal{C}_{Q_1} + Y_{21} = 0,
\end{equation}
\begin{equation} \label{eq_20_cod}
    -X_{22}^T + Y_{22} = 0.
\end{equation}
Rewriting \eqref{eq_18_cod}, \eqref{eq_19_cod}, we get, respectively,
\begin{equation} \label{eq_21_cod}
    X_{21} = Y_{12}^T\mathcal{C}_{Q_1}^{T},
    \end{equation}
\begin{equation} \label{eq_22_cod}
    X_{12} = \mathcal{C}_{Q_1}^{-T}Y_{21}^T.
\end{equation}
Using \eqref{eq_22_cod}, \eqref{eq_21_cod}, \eqref{eq_13_cod} and \eqref{eq_20_cod} in \eqref{eq_14_cod}, \eqref{eq_15_cod}, \eqref{eq_17_cod} and \eqref{eq_16_cod}, respectively, we get the following new system of equations, for which the dimension of the solution space is our desired codimension.
\begin{equation} \label{eq_23_cod}
    \mathcal{C}_{Q_1}^{-T}Y_{21}^T\mathcal{C}_{E_1} + Y_{12} = 0, \text{ i.e., } Y_{21}^T\mathcal{C}_{E_1} + \mathcal{C}_{Q_1}^{T}Y_{12} = 0, \text{ i.e., }
    Y_{12}^T\mathcal{C}_{Q_1} + \mathcal{C}_{E_1}^TY_{21} = 0,
\end{equation}
\begin{equation} \label{eq_24_cod}
    Y_{12}^T\mathcal{C}_{Q_1}^{T} + \mathcal{C}_{E_1}Y_{21} = 0, \text{ i.e., }
    Y_{21}^T\mathcal{C}_{E_1}^T + \mathcal{C}_{Q_1}Y_{12} = 0,
\end{equation}
\begin{equation} \label{eq_26_cod}
    X_{11}^T\mathcal{C}_{Q_1} + \mathcal{C}_{Q_1}X_{11} = 0.
\end{equation}
\begin{equation} \label{eq_25_cod}
    X_{22}\mathcal{C}_{E_1} + \mathcal{C}_{E_1}X_{22}^T = 0,
\end{equation}

Since the equations \eqref{eq_26_cod} and \eqref{eq_25_cod} are independent, the dimensions of their solution spaces 
are codimensions of orbits of $\mathcal{C}_{E_1}$ and $\mathcal{C}_{Q_1}$, respectively, under the action of $^{T}$-congruence, i.e., the dimension of the solution space of system of \eqref{eq_26_cod} and \eqref{eq_25_cod} is $c_0 + c_{00} + c_{1} + c_{2} + c_{12}$, by Theorem \ref{cod_cong}.
By definition \ref{inter}, the dimension of solution space of \eqref{eq_23_cod} and \eqref{eq_24_cod} is $\mbox{\rm \mbox{\rm \mbox{\rm inter}}} (\mathcal{C}_{E_1}^T,\mathcal{C}_{Q_1})$, i.e., $ \mbox{\rm inter}(\bigoplus_{k = 1}^{s}J_{r_k}(0),(\bigoplus_{i = 1}^{p} H_{2m_i}(\mu_i)) \oplus (\bigoplus_{j = 1}^{q} \Gamma_{n_j}))$. 

Now, we prove that $\mbox{\rm inter}(\bigoplus_{k = 1}^{s}J_{r_k}(0),(\bigoplus_{i = 1}^{p} H_{2m_i}(\mu_i)) \oplus (\bigoplus_{j = 1}^{q} \Gamma_{n_j})) = \mbox{\rm inter}(\bigoplus_{k = 1}^{s}J_{r_k}(0)$, $\bigoplus_{i = 1}^{p} H_{2m_i}(\mu_i))+\mbox{\rm inter}(\bigoplus_{k = 1}^{s}J_{r_k}(0),\bigoplus_{i = 1}^{p} H_{2m_i}(\mu_i))$. For $k = 1$, it is straightforward from the proof of \cite[Lemma 10, p.60]{DeDo11} that for $X \in GL_{n}(\mathbb C)$, 
$$\mbox{\rm \mbox{\rm inter}}(J_{r_1}(0),X) = 
\begin{cases}
    \text{number of rows of } X,& \text{ if } r_1 \text{ is odd},\\
    0, &\text{ if } r_1 \text{ is even}.
\end{cases}$$ Since \text{number of rows of } $\mathcal{C}_{Q_1}= \sum_{i = 1}^{p} 2m_i \text{ } + \text{ } \sum_{j = 1}^{q} n_j$. Thus, $\mbox{\rm inter}(J_{r_1}(0),\mathcal{C}_{Q_1})=inter(J_{r_1}(0),\bigoplus_{i = 1}^{p} H_{2m_i}(\mu_i))\text{ } + \text{ }\mbox{\rm inter}(J_{r_1}(0),\bigoplus_{j = 1}^{q} \Gamma_{n_j})$. For the scenario of $k=s$ in $\mathcal{C}_{E_1}$, i.e., $\mathcal{C}_{E_1} = \bigoplus_{k = 1}^{s}J_{r_k}(0)$, using \eqref{eq_23_cod} and \eqref{eq_24_cod}, we get that $Y_{21}^T\mathcal{C}_{E_1}^T = \mathcal{C}_{Q_1}\mathcal{C}_{Q_1}^{-T}Y_{21}^T\mathcal{C}_{E_1}$ which leads to following the solution for  $Y_{12}$.

Partition $Y_{21}^T$ conformably such that
$$\begin{bmatrix}
    U_1 & \dots & U_s
\end{bmatrix}\begin{bmatrix}
    J_{r_1}(0) & &\\
    & \ddots & \\
    & & J_{r_s}(0)
\end{bmatrix} = \mathcal{C}_{Q_1}\mathcal{C}_{Q_1}^{-T}
\begin{bmatrix}
    U_1 & \dots & U_s
\end{bmatrix}\begin{bmatrix}
    J_{r_1}(0)^T & &\\
    & \ddots & \\
    & & J_{r_s}(0)^T
\end{bmatrix},$$  $$\text{i.e., }\begin{bmatrix}
        U_1J_{r_1}(0) & \dots &
        U_sJ_{r_s}(0)
\end{bmatrix} = \mathcal{C}_{Q_1}\mathcal{C}_{Q_1}^{-T}
\begin{bmatrix}
        U_1J_{r_1}(0)^T & \dots &
        U_sJ_{r_s}(0)^T
\end{bmatrix},$$ $$\text{or, }
        U_1J_{r_1}(0) = \mathcal{C}_{Q_1}\mathcal{C}_{Q_1}^{-T}
        U_1J_{r_1}(0)^T, \dots,
        U_s J_{r_s}(0) = \mathcal{C}_{Q_1}\mathcal{C}_{Q_1}^{-T}
        U_sJ_{r_s}(0)^T.$$ 
        
        Since the columns of $
        U_1$, \dots, $U_r$ are independent of each other, following the proof of \cite[Lemma 10, p.60]{DeDo11} and assuming that $N_{odd}$ is the total number of odd sized Jordan blocks in $\mathcal{C}_{E_1}$, 
        \hide{if both $r_1$ and $r_2$ are even, $ 
         \begin{bmatrix}
        U_1\\
    U_3 
    \end{bmatrix} = 
    \begin{bmatrix}
        U_2\\
    U_4 
    \end{bmatrix} = 0, i.e., Y_{21} = 0 \text{ that implies } Y_{12} = 0$;
    if $r_1$ is odd and $r_2$ is even,
   $\begin{bmatrix}
        U_2\\
    U_4 
    \end{bmatrix} = 0$ and} 
    $\mbox{\rm \mbox{\rm inter}}(\bigoplus_{k = 1}^{s}J_{r_k}(0),\mathcal{C}_{Q_1}) = N_{odd}(\text{number of rows of } \mathcal{C}_{Q_1}) = N_{odd}(\sum_{i = 1}^{p} 2m_i \text{ } + \text{ } \sum_{j = 1}^{q} n_j) = N_{odd}(\sum_{i = 1}^{p} 2m_i) \text{ } +  N_{odd}(\text{ } \sum_{j = 1}^{q} n_j), $ i.e., $\mbox{\rm \mbox{\rm inter}}(\mathcal{C}_{E_1}^T$, $\mathcal{C}_{Q_1}) = c_{01} + c_{02}$ by Theorem \ref{cod_cong}. 
    
    \hide{To do so, assume $\mathcal{H} = \bigoplus_{i = 1}^{p} H_{2m_i}(\mu_i)$, $\mathcal{G} = \bigoplus_{j = 1}^{q} \Gamma_{n_j}$ and further, we partition 
$\bigoplus_{k = 1}^{s}J_{r_k}(0)$, $Y_{12}$ and $Y_{21}$ conformably such that 
\begin{equation}\label{inter_eq_1}
\begin{bmatrix}
    T_1^T & T_3^T\\
    T_2^T & T_4^T
\end{bmatrix}
\begin{bmatrix}
    \mathcal{H} & \\
     & \mathcal{G}
\end{bmatrix} + 
\begin{bmatrix}
    \mathcal{J}_1 & \\
    & \mathcal{J}_2
\end{bmatrix}
\begin{bmatrix}
    U_1 & U_2\\
    U_3 & U_4
\end{bmatrix} = 0
\end{equation}
\begin{equation}\label{inter_eq_2}
\text{and }
    \begin{bmatrix}
    U_1^T & U_3^T\\
    U_2^T & U_4^T
\end{bmatrix}
\begin{bmatrix}
    \mathcal{J}_1 & \\
    & \mathcal{J}_2
\end{bmatrix} +
\begin{bmatrix}
    \mathcal{H} & \\
     & \mathcal{G}
\end{bmatrix} 
\begin{bmatrix}
    T_1 & T_2\\
    T_3 & T_4
\end{bmatrix} = 0.
\end{equation} Expanding \eqref{inter_eq_1} and \eqref{inter_eq_2}, we get the following system of equations:
\begin{equation}\label{inter_eq_3}
    T_1^T\mathcal{H} + \mathcal{J}_1U_1 = 0,
\end{equation}
\begin{equation}\label{inter_eq_4}
    T_3^T\mathcal{G} + \mathcal{J}_1U_2 = 0,
\end{equation}
\begin{equation}\label{inter_eq_5}
    T_2^T\mathcal{H} + \mathcal{J}_2U_3 = 0,
\end{equation}
\begin{equation}\label{inter_eq_6}
    T_4^T\mathcal{G} + \mathcal{J}_2U_4 = 0,
\end{equation}
\begin{equation}\label{inter_eq_7}
    U_1^T\mathcal{J}_1 + \mathcal{H}T_1 = 0,
\end{equation}
\begin{equation}\label{inter_eq_8}
    U_3^T\mathcal{J}_2 + \mathcal{H}T_2 = 0,
\end{equation}
\begin{equation}\label{inter_eq_9}
    U_2^T\mathcal{J}_1 + \mathcal{G}T_3 = 0,
\end{equation}
\begin{equation}\label{inter_eq_10}
    U_4^T\mathcal{J}_2 + \mathcal{G}T_4 = 0.
\end{equation} Clearly, dimensions of Sol.(\eqref{inter_eq_3},\eqref{inter_eq_7}), Sol.(\eqref{inter_eq_4},\eqref{inter_eq_9}), Sol.(\eqref{inter_eq_5},\eqref{inter_eq_8}) and Sol.(\eqref{inter_eq_6},\eqref{inter_eq_10}) are $\mbox{\rm \mbox{\rm inter}}(\mathcal{H},\mathcal{J}_1)$, $\mbox{\rm \mbox{\rm inter}}(\mathcal{G},\mathcal{J}_1)$, $\mbox{\rm \mbox{\rm inter}}(\mathcal{H},\mathcal{J}_2)$ and $\mbox{\rm \mbox{\rm inter}}(\mathcal{G},\mathcal{J}_2)$, respectively. Thus, $\mbox{\rm \mbox{\rm inter}}(\mathcal{C}_{Q_1},\mathcal{C}_{E_1}^T) = \mbox{\rm inter}(\mathcal{H},\mathcal{J}_1\oplus \mathcal{J}_2) + \mbox{\rm inter}(\mathcal{G},\mathcal{J}_1 \oplus \mathcal{J}_2)$.}
Hence, the assertion follows.
\end{proof} 

We illustrate Theorem \ref{codimension_T} by the following example of a pair of $6 \times 6$ matrices.
\begin{example}\label{example}
    Let $(E,Q) \in (M_6,M_6)$. The canonical form under the transformation $(E,Q) \rightarrow (UEV,U^{-T}QV)$, for $U,V \in GL_{n}(\mathbb C)$, is $(\mathcal{C}_E, \mathcal{C}_Q) = (I_{4} \oplus J_{2}(0)^{T}, H_{2}(\mu) \oplus \Gamma_{2} \oplus I_2)$, where $\mu = 2$. This is the case of nonsingular $Q$ in Theorem \ref{codimension_T}. Using the same partition for $\mathcal{C}_E, \mathcal{C}_Q, X, Y$ and the same computations, we arrive at the system of equations:
    \begin{equation} \label{eq_1_ex}
    Y_{12}^T(H_{2}(\mu) \oplus \Gamma_{2}) + J_{2}(0)Y_{21} = 0,
\end{equation}
\begin{equation} \label{eq_2_ex}
    Y_{21}^TJ_{2}(0) + (H_{2}(\mu) \oplus \Gamma_{2})Y_{12} = 0,
\end{equation}
\begin{equation} \label{eq_3_ex}
    X_{11}^T(H_{2}(\mu) \oplus \Gamma_{2}) + (H_{2}(\mu) \oplus \Gamma_{2})X_{11} = 0,
\end{equation}
\begin{equation} \label{eq_4_ex}
    X_{22}J_{2}(0)^T + J_{2}(0)^TX_{22}^T = 0.
\end{equation} As in the proof of Theorem \ref{codimension_T}, the dimension of the solution space of \eqref{eq_3_ex} and \eqref{eq_4_ex} is the sum of codimensions of orbits of $H_{2}(\mu) \oplus \Gamma_{2}$ and $J_{2}(0)$ under the action of $^T$-congruence, which by Theorem \ref{cod_cong} is $2+1 = 3$.
\hide{Assume $Y_{12} = \begin{bmatrix}
    y_{11} & y_{21}\\
    y_{12} & y_{22}\\
    y_{13} & y_{23}\\
    y_{14} & y_{24}
\end{bmatrix}$ and $Y_{21} = \begin{bmatrix}
    x_{11} & x_{12} & x_{13} & x_{14} \\
    x_{21} & x_{22} & x_{23} & x_{24} 
\end{bmatrix}$. Substituting $Y_{12}$ and $Y_{21}$ in \eqref{eq_2_ex}, we get, 
\begin{equation*}
    \begin{bmatrix}
    x_{11} & x_{21}\\
    x_{12} & x_{22}\\
    x_{13} & x_{23}\\
    x_{14} & x_{24}
\end{bmatrix}
\begin{bmatrix}
    0 & 1\\
    0 & 0
\end{bmatrix} +
\begin{bmatrix}
    0 & \mu & 0 & 0\\
    1 & 0 & 0 & 0\\
    0 & 0 & 0 & -1\\
    0 & 0 & 1 &   0\\
\end{bmatrix}
\begin{bmatrix}
    y_{11} & y_{21}\\
    y_{12} & y_{22}\\
    y_{13} & y_{23}\\
    y_{14} & y_{24}
\end{bmatrix} = \begin{bmatrix}
    0
\end{bmatrix}_{4 \times 2},
\end{equation*}
\begin{equation}\label{eq_5_ex}
    \text{or, }
\begin{bmatrix}
    0 & x_{11}\\
    0 & x_{12}\\
    0 & x_{13}\\
    0 & x_{14}
\end{bmatrix} +
\begin{bmatrix}
    \mu y_{12} & \mu y_{22}\\
    y_{11} & y_{21}\\
    y_{14} & y_{24}\\
    y_{13} & y_{23}
\end{bmatrix} = \begin{bmatrix}
    0
\end{bmatrix}_{4 \times 2}.
\end{equation}
Expanding the matrices in \eqref{eq_5_ex}, since $\mu \neq 0$, 
\begin{equation}\label{y_{1i}}
    y_{1i} = 0, \text{ for } i = 1,2,3,4
\end{equation} and we end up with the following equations:
\begin{equation}\label{eq_6_ex}
    x_{11} + \mu y_{22} = 0,
\end{equation}
\begin{equation}\label{eq_7_ex}
    x_{12} + y_{21} = 0,
\end{equation}
\begin{equation}\label{eq_8_ex}
    x_{13} + y_{24} = 0,
\end{equation}
\begin{equation}\label{eq_9_ex}
    x_{14} + y_{23} = 0.
\end{equation}
Substituting values of $y_{1i} = 0$ from \eqref{y_{1i}} in $Y_{12}$ and using it along with $Y_{21}$ in \eqref{eq_1_ex}, we get the following system of equations:
$$\begin{bmatrix}
    0 & 0 & 0 & 0 \\
    y_{21} & y_{22} & y_{23} & y_{24} 
\end{bmatrix}
\begin{bmatrix}
    0 & \mu & 0 & 0\\
    1 & 0 & 0 & 0\\
    0 & 0 & 0 & -1\\
    0 & 0 & 1 &   0\\
\end{bmatrix} +
\begin{bmatrix}
    0 & 1\\
    0 & 0
\end{bmatrix}
\begin{bmatrix}
    x_{11} & x_{12} & x_{13} & x_{14} \\
    x_{21} & x_{22} & x_{23} & x_{24} 
\end{bmatrix} = \begin{bmatrix}
    0
\end{bmatrix}_{2 \times 4},$$
\begin{equation*}
    \begin{bmatrix}
        0 & 0 & 0 & 0 \\
        y_{22} & y_{21}\mu & y_{24} & -y_{23} 
    \end{bmatrix} + \begin{bmatrix}
    x_{21} & x_{22} & x_{23} & x_{24} \\
     0 & 0 & 0 & 0
\end{bmatrix} = \begin{bmatrix}
    0
\end{bmatrix}_{2 \times 4}.
\end{equation*}
From the above equation, clearly, $x_{2j} = y_{2j} = 0$, for $j = 1,2,3,4$. Substituting values of $y_{2j}$ in equations \eqref{eq_6_ex}, \eqref{eq_7_ex}, \eqref{eq_8_ex} and \eqref{eq_9_ex}, we have, $x_{1k} = 0$, for $k = 1,2,3,4$. Thus, $Y_{12} = 0$ and $Y_{21} = 0$.} Also, according to Definition \ref{inter}, the dimension of the solution space of \eqref{eq_1_ex} and \eqref{eq_2_ex} is $\mbox{\rm \mbox{\rm inter}}(J_{2}^T(0),H_{2}(\mu) \oplus \Gamma_{2})$ which by the proof method of Theorem \ref{codimension_*} is $0$. Thus, the  codimension
 is $3$. 
\end{example}

\hide{The next theorem is a special case of Theorem \ref{codimension_T} under the assumption of $E^TQ$ being symmetric.
\begin{theorem}
    Let $(E,Q) \in (M_{n}(\mathbb C),M_{n}(\mathbb C))$, $E^TQ$ be symmetric and at least one of $E$, $Q$ is nonsingular. The canonical form of $(E,Q)$ under the action of $^{T}$-equivalence is $(\mathcal{C}_E, \mathcal{C}_Q)$, where, $(\mathcal{C}_E, \mathcal{C}_Q) = (UEV,U^{-T}QV)$, for $U,V \in GL_{n}(\mathbb C)$ in Theorem \ref{E^TQ is symmetric}. Then, the codimension of $\mathcal{O}(E,Q)$ in \eqref{orbit_T_EQUIV} under the action of $^T$equivalence is:
    $$ c_{(E,Q)} =\begin{cases}
        3r + (1+r)(n-v), \text{ when $E$ is nonsingular}\\
        3r + (1+r)n, \text{ when $Q$ is nonsingular}
    \end{cases} .$$
\end{theorem}
\begin{proof}
    Let $E$, $Q$, $E^TQ$, $(\mathcal{C}_E, \mathcal{C}_Q)$  be as in statement of the theorem. Note, from Theorem \ref{E^TQ is symmetric}, there exist no $(I_{2k},H_{2k})$ blocks in $(\mathcal{C}_E, \mathcal{C}_Q)$ in both the cases. So, $c_2, c_{22}, c_{02}, c_{12}$ no longer exist in the sum $c_{(E,Q)}$ in Theorem \ref{codimension_T}. Let us consider the case of $E$ being nonsingular. Similar arguments work for the other case. Following the definitions of the components in the sum $c_{(E,Q)}$ in Theorem \ref{codimension_T} and the proof of Theorem \ref{E^TQ is symmetric}, we have $c_1 = \sum_{i=1}^{n-v} \big\lfloor\dfrac{1}{2}\big\rfloor =0$, $c_{11} = \sum \min\{1,1\} = n-v$, $c_{01} = N_{odd}\sum_{i=1}^{n-v} 1$, where $N_{odd}$ is the total number of pairs of type $(1,0)$ or $(I_1,J_1(0))$, i.e., $c_{01} = r(n-v)$, $c_0 = \sum_{i=1}^{r} \big\lceil\dfrac{1}{2}\big\rceil = r$ and $c_{00} = \sum_{i,j=1, i<j}^{r} \mbox{\rm \mbox{\rm inter}}(\mathcal{J}_{r_i},\mathcal{J}_{r_j}) = 2r.$
\end{proof}}
\subsection{$^{*}$-equivalence for a pair of matrices}
Consider the action of $^{*}$-equivalence, i.e., that $(GL_{n}(\mathbb C)$, $GL_{n}(\mathbb C))$ acts on $(M_{n}(\mathbb C)$, $M_{n}(\mathbb C))$, i.e., $U,V$ acts on $(E,Q)$ by $(UEV, U^{-*}QV)$. Then, the orbit under this action is
\begin{equation}\label{orbit_*_EQUIV}
    \mathcal{O}^{*}(E,Q) = \{ (UEV, U^{-*}QV): (U,V) \in (GL_{n}(\mathbb C),GL_{n}(\mathbb C)) \}.
\end{equation} Since, this above action is not compatible with complex-analyticity, $\mathcal{O}^{*}(E,Q)$ is a smooth real manifold in $(M_{n}(\mathbb C),M_{n}(\mathbb C))$. We omit the proof of the following lemma as it is similar to that of Lemma \ref{tangent_space}. 
\begin{lemma}\label{tangent_space_*}
   Let $(E,Q) \in (M_{n}(\mathbb C),M_{n}(\mathbb C))$ and let $\mathcal{O}^{*}(E,Q)$ in \eqref{orbit_*_EQUIV} be the orbit under the action of $^{*}$-equivalence. Then the tangent space of $\mathcal{O}^{*}(E,Q)$ at $(E,Q)$ is 
   $$\mathcal{T}^{*}(E,Q) = \{(XE + EY, -X^{*}Q + QY) : (X,Y) \in (M_{n}(\mathbb C),M_{n}(\mathbb C))\}.$$
\end{lemma}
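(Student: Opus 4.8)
**Proof plan for Lemma \ref{tangent_space_*}.**

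The plan is to mimic the proof of Lemma \ref{tangent_space} almost verbatim, since the only difference is the replacement of $U^{-T}$ by $U^{-*}$ in the group action. First I would recall the general principle, used already in \cite[p.~71]{DeEd95} and in the proof of Lemma \ref{tangent_space}, that the tangent space to an orbit at a point is obtained by differentiating the group action along curves through the identity of the acting group. Concretely, take a small scalar $\epsilon$ and consider the action of $^{*}$-equivalence by the pair $(I + \epsilon X, I + \epsilon Y)$ on $(E,Q)$, where $(X,Y)\in (M_n(\mathbb C),M_n(\mathbb C))$ is arbitrary; for $\epsilon$ small enough both $I+\epsilon X$ and $I+\epsilon Y$ are nonsingular, so this is a legitimate curve in $(GL_n(\mathbb C),GL_n(\mathbb C))$ passing through $(I,I)$ at $\epsilon=0$.

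The key computation is then to expand $\big((I+\epsilon X)E(I+\epsilon Y),\ (I+\epsilon X)^{-*}Q(I+\epsilon Y)\big)$ to first order in $\epsilon$. The first component gives $E + \epsilon(XE + EY) + O(\epsilon^2)$ exactly as before. For the second component, the only new ingredient is the expansion of $(I+\epsilon X)^{-*}$; since $(I+\epsilon X)^{-1} = I - \epsilon X + O(\epsilon^2)$, taking conjugate transpose yields $(I+\epsilon X)^{-*} = I - \epsilon X^{*} + O(\epsilon^2)$. Hence the second component is $Q + \epsilon(-X^{*}Q + QY) + O(\epsilon^2)$. Differentiating at $\epsilon = 0$ therefore produces the tangent vector $(XE + EY,\ -X^{*}Q + QY)$, and letting $(X,Y)$ range over all of $(M_n(\mathbb C),M_n(\mathbb C))$ shows that $\mathcal{T}^{*}(E,Q)$ is exactly the claimed set. (One should note that, because $X\mapsto X^{*}$ is only real-linear, the resulting tangent space is a real subspace, consistent with $\mathcal{O}^{*}(E,Q)$ being a real manifold, as already observed before the lemma.)

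I do not expect a genuine obstacle here: the argument is a routine first-order expansion and the statement is the obvious analogue of Lemma \ref{tangent_space}. The only point requiring a word of care is the conjugate-transpose expansion $(I+\epsilon X)^{-*} = I - \epsilon X^{*} + O(\epsilon^2)$ and the resulting real- (rather than complex-) linearity of the parametrization; since the paper has already flagged that $\mathcal{O}^{*}(E,Q)$ is a real manifold, it suffices to remark on this and then omit the remaining details, exactly as the authors do in the sentence preceding the lemma statement.
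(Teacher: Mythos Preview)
Your proposal is correct and follows exactly the same approach as the paper: the authors omit the proof entirely, stating only that it is similar to that of Lemma~\ref{tangent_space}, which is precisely the first-order expansion argument you outline. Your added remark that $\epsilon$ should be real (so that $(I+\epsilon X)^{-*}=I-\epsilon X^{*}+O(\epsilon^{2})$ holds and the parametrization is real-linear) is a useful clarification that the paper does not make explicit.
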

The next lemma states the (real) codimension of $\mathcal{O}^{*}(E,Q)$ in \eqref{orbit_*_EQUIV} and the arguments involved in its proof are similar to that of Lemma \ref{cod_T_equiv}.
\begin{lemma}\label{cod_T_equiv1}
    Let $(E,Q) \in (M_{n}(\mathbb C),M_{n}(\mathbb C))$ and $\mathcal{O}^{*}(E,Q)$ in \eqref{orbit_*_EQUIV} be the orbit under the action of $^{*}$-equivalence. Then the (real) codimension of $\mathcal{O}^{*}(E,Q)$ is the dimension of the solution space of the following system:
\begin{equation}\label{*eq-1}
    XE + EY = 0, \text{ } -X^*Q + QY = 0, \text{ } (X,Y) \in (M_{n}(\mathbb C),M_{n}(\mathbb C)).
\end{equation}
\end{lemma}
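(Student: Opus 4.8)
The plan is to mirror the proof of Lemma~\ref{cod_T_equiv}, with the one proviso that the $^{*}$-equivalence action is not complex-analytic, so every dimension count must be carried out over $\mathbb{R}$. First I would record the structural facts already noted in the text: $\mathcal{O}^{*}(E,Q)$ is the orbit of the smooth action of the (real) group $GL_{n}(\mathbb C)\times GL_{n}(\mathbb C)$ on the ambient real manifold $(M_{n}(\mathbb C),M_{n}(\mathbb C))$, whose real dimension is $4n^{2}$. Consequently $\mathcal{O}^{*}(E,Q)$ is a smooth real submanifold, and the real dimension of the orbit equals the real dimension of its tangent space at the base point $(E,Q)$, which by Lemma~\ref{tangent_space_*} is $\mathcal{T}^{*}(E,Q)=\{(XE+EY,\,-X^{*}Q+QY):(X,Y)\in (M_{n}(\mathbb C),M_{n}(\mathbb C))\}$.

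Next I would introduce the map $f\colon (M_{n}(\mathbb C),M_{n}(\mathbb C))\to (M_{n}(\mathbb C),M_{n}(\mathbb C))$ sending $(X,Y)$ to $(XE+EY,\,-X^{*}Q+QY)$, and observe that $f$ is $\mathbb{R}$-linear (but not $\mathbb{C}$-linear, because of the conjugation in $X^{*}$). By construction $\operatorname{im} f=\mathcal{T}^{*}(E,Q)$, so $\dim_{\mathbb{R}}\mathcal{O}^{*}(E,Q)=\dim_{\mathbb{R}}\operatorname{im} f = 4n^{2}-\dim_{\mathbb{R}}\ker f$. Hence the real codimension of $\mathcal{O}^{*}(E,Q)$ in the $4n^{2}$-dimensional ambient real manifold is $4n^{2}-\dim_{\mathbb{R}}\mathcal{O}^{*}(E,Q)=\dim_{\mathbb{R}}\ker f$. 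Finally, $\ker f$ is exactly the set of pairs $(X,Y)$ with $XE+EY=0$ and $-X^{*}Q+QY=0$, i.e.\ the solution space over $\mathbb{R}$ of system~\eqref{*eq-1}, which gives the claimed equality.

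I do not expect a genuine obstacle here; the only point needing care is the bookkeeping over $\mathbb{R}$: system~\eqref{*eq-1} is a real-linear (not complex-linear) system, its ``dimension'' is the real dimension of its solution set, and the codimension in Lemma~\ref{cod_T_equiv1} is a real codimension, consistent with Theorem~\ref{*cong_cod} and Definition~\ref{inter*}. Once this is stated up front, the argument is verbatim the one used for Lemma~\ref{cod_T_equiv}, so the proof can be kept very short, essentially ``follow the proof of Lemma~\ref{cod_T_equiv}, replacing transpose by conjugate transpose and complex dimensions by real dimensions.''
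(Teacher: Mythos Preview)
Your proposal is correct and follows exactly the approach the paper intends: the paper omits the proof, stating only that the argument is similar to that of Lemma~\ref{cod_T_equiv}, which is precisely what you carry out (introduce the map $f$, identify its image with the tangent space from Lemma~\ref{tangent_space_*}, and apply rank--nullity). Your explicit handling of the $\mathbb{R}$-linearity of $f$ and the real dimension count $4n^{2}$ is in fact more careful than the paper's own treatment.
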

The following lemma is needed to prove our desired result.
\begin{lemma}\label{sol_bet_sys_*}
    Let $E,Q,E_1,Q_1 \in M_{n}(\mathbb C)$ such that for $U,V \in GL_{n}(\mathbb C)$, $(E_1,Q_1) = (UEV,U^{-*}QV)$. Let $(X^{'},Y^{'})\in (M_{n}(\mathbb C)$, $M_{n}(\mathbb C))$ such that $X^{'} = UXU^{-1}$ and $Y^{'} = V^{-1}YV$. Then $(X,Y)$ is a solution of the system \eqref{*eq-1} if and only if $(X^{'},Y^{'})$ is a solution of the system \begin{equation}\label{*eq-3}
    X^{'}E_1 + E_1Y^{'} = 0, \text{ } -X^{'*}Q_1 + Q_1Y^{'} = 0.
\end{equation}
\end{lemma}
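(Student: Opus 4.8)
**Proof proposal for Lemma~\ref{sol_bet_sys_*}.**

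The plan is to mirror the proof of Lemma~\ref{sol_bet_sys} almost verbatim, replacing the transpose by the conjugate transpose throughout, and being careful with the one place where conjugation interacts nontrivially with a substitution — namely the term $X^{*}Q$. First I would take $(X,Y)$ satisfying the system \eqref{*eq-1} and multiply its first equation on the left by $U$ and on the right by $V$, obtaining $U X E V + U E Y V = 0$; inserting $U^{-1}U$ and $V V^{-1}$ in the appropriate spots this reads $(UXU^{-1})(UEV) + (UEV)(V^{-1}YV) = 0$, i.e.\ $X' E_1 + E_1 Y' = 0$ by the definition of $(E_1,Q_1)$ and of $(X',Y')$. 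This half is identical to the $^{T}$-case.

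For the second equation I would multiply $-X^{*}Q + QY = 0$ on the left by $U^{-*}$ and on the right by $V$, giving $-U^{-*}X^{*}QV + U^{-*}QYV = 0$. The key observation is that $U^{-*}X^{*}U^{*} = (U^{-1}XU^{-1}\cdot U)^{*}$ ... more carefully: since $X' = UXU^{-1}$ we have $X'^{*} = U^{-*}X^{*}U^{*}$, hence $U^{-*}X^{*} = X'^{*}U^{-*}$. Therefore $-U^{-*}X^{*}QV = -X'^{*}(U^{-*}QV) = -X'^{*}Q_1$, while $U^{-*}QYV = (U^{-*}QV)(V^{-1}YV) = Q_1 Y'$, so the second equation becomes $-X'^{*}Q_1 + Q_1 Y' = 0$, which is exactly the second equation of \eqref{*eq-3}. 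Thus every solution $(X,Y)$ of \eqref{*eq-1} yields a solution $(X',Y')$ of \eqref{*eq-3}.

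For the converse I would simply note that $X = U^{-1}X'U$ and $Y = V Y' V^{-1}$, and that the relation $(E,Q) = (U^{-1}E_1 V^{-1}, U^{*}Q_1 V^{-1})$ is of the same type (with $U^{-1}$, $V^{-1}$ in place of $U$, $V$), so applying the forward direction to $(E_1,Q_1)$, $(X',Y')$ returns $(E,Q)$, $(X,Y)$ and the system \eqref{*eq-1}. Equivalently, each manipulation above is reversible: multiplying \eqref{*eq-3} by $U^{-1}$, $U^{*}$, $V^{-1}$ on the appropriate sides undoes the substitution. Hence $(X,Y)$ solves \eqref{*eq-1} if and only if $(X',Y')$ solves \eqref{*eq-3}.

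The only genuinely delicate point — and the one I would write out in full — is the identity $U^{-*}X^{*} = X'^{*}U^{-*}$, i.e.\ correctly commuting the conjugate transpose past the similarity $X' = UXU^{-1}$; in the $^{T}$-case the analogous step reads $U^{-T}X^{T}U^{T} = X'^{T}$, and one must make sure no spurious conjugation of $E_1$, $Q_1$ or $Y'$ is introduced, since those matrices are transformed by plain multiplication, not by a starred action. Everything else is the same bookkeeping as in Lemma~\ref{sol_bet_sys}.
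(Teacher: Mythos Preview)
Your proposal is correct and follows exactly the approach the paper indicates: it mirrors the proof of Lemma~\ref{sol_bet_sys}, multiplying the first equation by $U$ and the second by $U^{-*}$ (in place of $U^{-T}$) on the left and both by $V$ on the right, then using $X'^{*}=U^{-*}X^{*}U^{*}$ to identify the transformed system with \eqref{*eq-3}. In fact you spell out in detail precisely what the paper leaves implicit, including the one subtle point about how conjugate transpose interacts with the similarity $X'=UXU^{-1}$.
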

\begin{proof}
    The proof is similar to that of Lemma \ref{sol_bet_sys}, the only difference being that second equation of \eqref{*eq-1} is multiplied by $U^*$ instead of $U^T$.
\end{proof}

Since \eqref{*eq-1} and \eqref{*eq-3} are equivalent,  we can solve the system \eqref{*eq-1} for the canonical blocks of $(E,Q)$ under $^{*}$-equivalence. The next theorem is our main result in this subsection.
\begin{theorem}\label{codimension_*}
    Let $(E,Q) \in (M_{n}(\mathbb C),M_{n}(\mathbb C))$ and at least one of $E$ or $Q$ is nonsingular. For $U,V \in GL_{n}(\mathbb C)$, he canonical form under the transformation $(E,Q) \rightarrow (UEV,U^{-*}QV)$  is $(\overline{\mathcal{C}}_E, \overline{\mathcal{C}}_Q)$, where,\\ for nonsingular $E$,
$$(\overline{\mathcal{C}}_E, \overline{\mathcal{C}}_Q) = (I_n,(\bigoplus_{i = 1}^{p} H_{2m_i}(\mu_i)) \oplus (\bigoplus_{j = 1}^{q}e^{i\theta_{j}}\Delta_{n_j})\oplus (\bigoplus_{k = 1}^{s}J_{r_k}(0)) \text{ and}$$
    for nonsingular $Q$,
$$(\overline{\mathcal{C}}_E, \overline{\mathcal{C}}_Q) = (I_{n-v} \oplus (\bigoplus_{k = 1}^{s}J_{r_k}(0)^{*}),(\bigoplus_{i = 1}^{p} H_{2m_i}(\mu_i)) \oplus (\bigoplus_{j = 1}^{q}e^{i\theta_{j}}\Delta_{n_j})\oplus I_v),$$ where $v = \sum_{k=1}^{s}r_k$, $|\mu_i| > 1, \text{ and } 0\le \theta_j < 2\pi .$
     This canonical form is uniquely determined up to permutation of pair of summands. Define 
     $\mathcal{J}_{r_i} = 
         \begin{cases}
             (I_{r_i},J_{r_i}(0)), \text{ when $E$ is nonsingular},\\
             (J_{r_i}(0)^*,I_{r_i}), \text{ when $Q$ is nonsingular}.
         \end{cases}$ 
         
Then the codimension of $\mathcal{O}^{*}(E,Q)$ in \eqref{orbit_*_EQUIV} under the action of $^*$-equivalence can be computed as the sum:
    $$ \overline{c}_{(E,Q)} = \overline{c}_{0} + \overline{c}_{1} + \overline{c}_{2} + \overline{c}_{00} + \overline{c}_{11} + \overline{c}_{22} + \overline{c}_{01} + \overline{c}_{02},$$ where the components are given by the following expressions: 
     \begin{itemize}
         \item
         $ \overline{c}_0 = \sum_{i=1}^{s} 2\big\lceil\dfrac{r_i}{2}\big\rceil$,
         $ \overline{c}_1 = \sum_{i=1}^{q} n_i$,
         $ \overline{c}_2 = \sum_{i=1}^{p} 2m_i$.
         \item
         $\overline{c}_{00} =
              \sum_{i,j=1, i<j}^{s} \mbox{\rm inter} (J_{r_i}(0),J_{r_j}(0)), \text{ where,}\\
        \mbox{\rm inter}(J_{r_i}(0),J_{r_j}(0)) =
         \begin{cases}
              2r_j, \text{ if } r_j \text{ is even, }\\ 
              2r_i, \text{ if } r_j \text{ is odd and } r_i \neq r_j,\\
              2(r_i + 1), \text{ if } r_j \text{ is odd and } r_i = r_j.
         \end{cases}$
         \item $\overline{c}_{11} = \sum \min\{n_{i},n_{j}\},$ where the sum runs over all pairs of blocks $((I_{n_i},e^{i\theta_{i}}\Delta_{n_i})$, $(I_{n_j},e^{i\theta_{j}}\Delta_{n_j}))$, $ i<j,$ in $(\overline{\mathcal{C}}_E, \overline{\mathcal{C}}_Q)$ such that (a) $n_i$ and $n_j$ have the same parity (both odd or both even) and $|e^{i\theta_{i}}|= \pm |e^{i\theta_{j}}|$, and (b) $n_i$ and $n_j$ have different parity and $|e^{i\theta_{i}}|= \pm |e^{i\theta_{j}}|$.
         \item
         $\overline{c}_{22} = 4\sum \min\{m_i,m_j\},$ where the first sum is taken over all pairs  $((I_{2m_i}$, $H_{2m_i}(\mu_i)),(I_{2m_j},H_{2m_j}(\mu_j))), i<j$, of blocks in $(\overline{\mathcal{C}}_E, \overline{\mathcal{C}}_Q)$ such that such that $\mu_i=\mu_j$.
         \item 
         $\overline{c}_{01} = N_{odd}\cdot \sum_{i=1}^{q}2n_i,$ where $N_{odd}$ is the number of $\mathcal{J}_{r_k}$ blocks with odd size in $(\overline{\mathcal{C}}_E, \overline{\mathcal{C}}_Q)$.
         \item 
         $\overline{c}_{02} = N_{odd}\cdot\sum_{i=1}^{p}4m_i,$ where $N_{odd}$ is the number of $\mathcal{J}_{r_k}$ blocks with odd size in $(\overline{\mathcal{C}}_E, \overline{\mathcal{C}}_Q)$.
     \end{itemize}
\end{theorem}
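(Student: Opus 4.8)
The plan is to reduce the codimension computation, exactly as in the proof of Theorem~\ref{codimension_T}, to the codimension of $^{*}$-congruence orbits of square matrices (Theorem~\ref{*cong_cod}) together with a real analogue of the interaction estimates of \cite{DeDo11}. The assertions that $(\overline{\mathcal{C}}_E,\overline{\mathcal{C}}_Q)$ is a canonical form and its uniqueness are exactly Theorem~\ref{CCF_*_equi_2} (with $v=n-\rank(E^{*}Q)$), whose summands are in one-to-one correspondence with the $^{*}$-congruence canonical summands of $E^{*}Q$ by Lemma~\ref{condition}. By Lemma~\ref{cod_T_equiv1} the real codimension of $\mathcal{O}^{*}(E,Q)$ equals the real dimension of the solution space of \eqref{*eq-1}, and by Lemma~\ref{sol_bet_sys_*} that system is carried to the corresponding system for a $^{*}$-equivalent pair by the real-linear isomorphism $(X,Y)\mapsto(UXU^{-1},V^{-1}YV)$. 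Hence I may assume $(E,Q)=(\overline{\mathcal{C}}_E,\overline{\mathcal{C}}_Q)$.

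First I would dispose of the case that $E$ is nonsingular, where $\overline{\mathcal{C}}_E=I_n$. The first equation of \eqref{*eq-1} then forces $Y=-X$, and the second collapses to $X^{*}\overline{\mathcal{C}}_Q+\overline{\mathcal{C}}_QX=0$, whose real solution space has, by definition, dimension equal to the real codimension of the $^{*}$-congruence orbit of $\overline{\mathcal{C}}_Q$. Theorem~\ref{*cong_cod} applies verbatim, with the $J_{r_k}(0)$, the $e^{i\theta_j}\Delta_{n_j}$ and the $H_{2m_i}(\mu_i)$ playing the roles of the Type~0, Type~I$'$ and Type~II$'$ blocks; translating its eight summands into the present notation produces exactly $\overline{c}_0,\dots,\overline{c}_{02}$ (in particular $N_{odd}$ is the number of odd $r_k$, i.e.\ of odd-sized blocks $\mathcal{J}_{r_k}=(I_{r_k},J_{r_k}(0))$).

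For the case that $Q$ is nonsingular, I would write $\overline{\mathcal{C}}_E=I_{n-v}\oplus\mathcal{C}_{E_1}$ and $\overline{\mathcal{C}}_Q=\mathcal{C}_{Q_1}\oplus I_v$ with $\mathcal{C}_{E_1}=\bigoplus_k J_{r_k}(0)^{*}$ and $\mathcal{C}_{Q_1}=(\bigoplus_i H_{2m_i}(\mu_i))\oplus(\bigoplus_j e^{i\theta_j}\Delta_{n_j})$, partition $X,Y$ conformably, expand \eqref{*eq-1} into eight block equations, and eliminate $Y_{11},Y_{22},X_{12},X_{21}$ using the four equations that contain the identity blocks (the step-by-step analogue of the reduction carried out in the proof of Theorem~\ref{codimension_T}). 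What remains is three mutually independent subsystems
$$X_{11}^{*}\mathcal{C}_{Q_1}+\mathcal{C}_{Q_1}X_{11}=0,\qquad X_{22}\mathcal{C}_{E_1}+\mathcal{C}_{E_1}X_{22}^{*}=0,$$
$$Y_{21}^{*}\mathcal{C}_{E_1}+\mathcal{C}_{Q_1}^{*}Y_{12}=0,\qquad Y_{12}^{*}\mathcal{C}_{Q_1}^{*}+\mathcal{C}_{E_1}Y_{21}=0.$$
The first has solution-space dimension equal to the real codimension of the $^{*}$-congruence orbit of $\mathcal{C}_{Q_1}$, which by Theorem~\ref{*cong_cod} (no Type~0 block present) is $\overline{c}_1+\overline{c}_2+\overline{c}_{11}+\overline{c}_{22}$; setting $Z=X_{22}^{*}$, the second becomes $Z^{*}\mathcal{C}_{E_1}+\mathcal{C}_{E_1}Z=0$, whose solution space has dimension the real codimension of the $^{*}$-congruence orbit of $\mathcal{C}_{E_1}$, and since the $^{*}$-congruence canonical form of $\bigoplus_k J_{r_k}(0)^{*}$ is $\bigoplus_k J_{r_k}(0)$ (all Type~0) this equals $\overline{c}_0+\overline{c}_{00}$; and, matching the last pair against Definition~\ref{inter*}, the common solution space of those two equations has real dimension $\mbox{\rm inter}^{*}(\mathcal{C}_{E_1},\mathcal{C}_{Q_1}^{*})$. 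Adding the three contributions gives $\overline{c}_{(E,Q)}$ once one knows that $\mbox{\rm inter}^{*}(\mathcal{C}_{E_1},\mathcal{C}_{Q_1}^{*})=\overline{c}_{01}+\overline{c}_{02}$.

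The hard part will be this last interaction identity, the real counterpart of \cite[Lemma~10]{DeDo11} used in the proof of Theorem~\ref{codimension_T}. The statement to prove, from \cite{DeDo11_1} or directly from the equations of Definition~\ref{inter*} (exploiting that one argument is a single nilpotent Jordan block and the other is nonsingular), is that for nonsingular $X$ with $\ell$ rows one has $\mbox{\rm inter}^{*}(J_r(0),X)=2\ell$ when $r$ is odd and $0$ when $r$ is even, and that this real interaction is additive over the decomposition $\mathcal{C}_{E_1}=\bigoplus_k J_{r_k}(0)^{*}$ because the columns attached to distinct Jordan blocks decouple (as in the proof of Theorem~\ref{codimension_T}). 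Taking $X=\mathcal{C}_{Q_1}^{*}$, which is nonsingular with $\sum_i 2m_i+\sum_j n_j$ rows, this yields $\mbox{\rm inter}^{*}(\mathcal{C}_{E_1},\mathcal{C}_{Q_1}^{*})=N_{odd}\bigl(2\sum_j n_j+4\sum_i m_i\bigr)=\overline{c}_{01}+\overline{c}_{02}$, where $N_{odd}$ is the number of odd-sized blocks $\mathcal{J}_{r_k}$, which completes the proof; the uniqueness is inherited from Theorem~\ref{CCF_*_equi_2} and needs nothing further.
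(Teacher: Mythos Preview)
Your proposal is correct and follows essentially the same approach as the paper: the paper's proof is the single sentence ``imitate the proof of Theorem~\ref{codimension_T}, replacing transpose by conjugate transpose, $^{T}$-equivalence/congruence by $^{*}$-equivalence/congruence, Theorem~\ref{CCF_*_equi} by Theorem~\ref{CCF_*_equi_2}, and Definition~\ref{inter} by Definition~\ref{inter*}, then apply Theorem~\ref{*cong_cod},'' and what you have written is precisely a fleshed-out version of that sentence. Your identification of the off-diagonal system with $\mathrm{inter}^{*}(\mathcal{C}_{E_1},\mathcal{C}_{Q_1}^{*})$ (rather than the literal analogue $\mathrm{inter}^{*}(\mathcal{C}_{E_1}^{*},\mathcal{C}_{Q_1})$) is harmless, since the real-linear change $(Y_{12},Y_{21})\mapsto(Y_{21}^{*},Y_{12}^{*})$ matches it to Definition~\ref{inter*} and both yield the same value $N_{odd}\cdot 2(\sum_j n_j+\sum_i 2m_i)$.
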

\begin{proof}
Imitating the proof of Theorem \ref{codimension_T}, and considering conjugate transpose of a matrix, $^{*}$-equivalence, $^{*}$-congruence, Theorem \ref{CCF_*_equi_2} and Definition \ref{inter*} instead of transpose of the matrix, $^{T}$-equivalence, $^{T}$-congruence, Theorem \ref{CCF_*_equi} and Definition \ref{inter}, respectively, the result follows using Theorem \ref{*cong_cod}. 

\end{proof}
\section{Conclusion and future work}

In this work, we have derived canonical forms for pairs of complex matrices $(E, Q)$ under transformations motivated by the structure of linear time-invariant (LTI) dissipative Hamiltonian descriptor systems, in particular Lagrange and Dirac subspaces. These forms provide insight into the algebraic properties of matrix pencils such as $\lambda E - Q$ and $\lambda E - LQ$, which are central to the analysis of energy-based dynamical models. We have also addresses structured cases, where $E^TQ$ and $E^*Q$ are, (skew-)symmetric or (skew-)Hermitian, respectively. Furthermore, we have computed the codimensions of the associated equivalence orbits.

Future work may include extending these canonical forms to broader classes of systems, in particular, general singular systems. Another direction is to explore versal deformations \cite{Dmyt25,DmFS12,DmFS14},  stratifications \cite{Dmyt17,DmKa14} of the orbits, and generic orbits \cite{DeDD24a,DeDD24b} to gain deeper insights into the geometric and topological structure of the corresponding spaces.

\section*{Acknowledgements}
The authors thank Christian Mehl for the valuable discussions.
The work was supported by the Swedish Research Council (VR) under grant 2021-05393. 

\bibliographystyle{plain}
\bibliography{library}

\end{document}